\documentclass[12pt,a4paper,openright]{article}
\usepackage[a4paper,textwidth=180mm]{geometry}
\usepackage[OT1]{fontenc}
\usepackage[utf8]{inputenc}
\usepackage{xcolor,lastpage,amsmath,amsthm,amssymb,mathrsfs,enumerate,graphicx,makeidx,hyperref,indentfirst,cite,color,microtype,mathtools,color,titlefoot}
\theoremstyle{definition}
\newtheorem{defn}{Definition}[section]

\newtheorem{prop}[defn]{Proposition}
\newtheorem{lema}[defn]{Lemma}
\newtheorem{teo}[defn]{Theorem}
\newtheorem{cor}[defn]{Corollary}
\newtheorem{obs}[defn]{Remark}

\DeclareMathOperator{\inte}{int}

\DeclareMathOperator{\dive}{div}
\DeclareMathOperator{\tr}{tr}

\DeclareMathOperator{\hess}{Hess}

\DeclareMathOperator{\id}{id}

\DeclareMathOperator{\Isom}{Isom}
\DeclareMathOperator{\re}{Re}
\DeclareMathOperator{\sn}{sn}
\DeclareMathOperator{\cn}{cn}
\title{Stable hypersurfaces with constant higher order mean curvature}
\author{Leonardo Damasceno\thanks{Instituto de Matemática, Universidade Federal do Rio de Janeiro, CP 68530, CEP 21941-909, Rio de Janeiro, Brazil. E-mail: \texttt{damasceno@im.ufrj.br}}\and Maria Fernanda Elbert\thanks{Instituto de Matemática, Universidade Federal do Rio de Janeiro, CP 68530, CEP 21941-909, Rio de Janeiro, Brazil. E-mail: \texttt{fernanda@im.ufrj.br}}}
\pagestyle{headings}
\everymath{\displaystyle}
\allowdisplaybreaks
\begin{document}
\maketitle

\begin{abstract}
We propose a notion of stability for capillary hypersurfaces with constant higher order mean curvature and we generalize some results of the classical stability theory for CMC capillary hypersurfaces. 
\end{abstract}

\unmarkedfntext{Keywords: Isometric immersions, free boundary, capillary, higher order mean curvature}
\unmarkedfntext{2000 Mathematical Subject Classification: 53C42, 53A10.}
\unmarkedfntext{The authors were partially supported by CAPES.}

\section{Introduction}
\noindent Let $M^{n+1}$ be an oriented Riemannian manifold and $\varphi : \Sigma^n \rightarrow M$ be an oriented hypersurface. The higher order mean curvature of order $k$ of the hypersurface, $H_k$, is defined as the normalized $k$-th symmetric function of the principal curvatures of $\varphi : \Sigma^n \rightarrow M$. We recall that $H=H_1$ is the mean curvature. Consider a closed domain with smooth boundary $\Omega\subset M$.
In analogy with the constant mean curvature case (CMC), we say that a compact $H_k$-hypersurface in $\Omega$ is capillary if it is a $H_k$-hypersurface with non-empty boundary, whose boundary meets the $\partial \Omega$ at a constant angle. In the particular case where the angle is $\pi/2$, the capillary $H_k$-hypersurface is called free boundary. In this paper, we are interested in stability questions involving capillary $H_k$-hypersurfaces in $\Omega$.

The stability of the minimal or CMC hypersurfaces has been drawing the attention of many mathematicians over the years. Stable hypersurfaces in this case are those for which the second variation of the area is non-negative for a suitable class of variational problems. The existence or not of a boundary $\partial\Sigma$ on $\Sigma$ and the specific constraints on the variations determine the kind of stability problem we deal with. The stability results for $H_1$-hypersurfaces in the literature covers from that applied to closed (without boundary) hypersurfaces to that applied to capillary hypersurfaces, which includes the free boundary hypersurfaces as a particular case (see \cite{barbosa1984stability, lucas1988stability, ros1997stability, souam1997stability, wang2019uniqueness}).

We recall that a closed $H_k$-hypersurface, $k>1$, of a space form is a critical point for a modified area functional for volume preserving variations and it inherits the concept of stability from this variational problem (see \cite{marques1997stability}). We point out that a similar variational characterization of $H_k$-hypersurfaces in a general Riemannian manifold is not known. Despite this, in \cite{elbert2019note}, the second author and B. Nelli generalized the notion of stability for closed (or fixed boundary) $H_k$-hypersurfaces of a general Riemannian manifold for $k>1$.

Inspired by the approach of Alaee, Lesourd and Yau in \cite{alaee2021stable} when dealing capillary marginally outer trapped surfaces (MOTS), we propose a stability theory for capillary $H_k$-hypersurfaces, $k>1$, that generalizes the capillary CMC ($k=1$) stability theory (see \cite{ros1997stability, ros1995stability, souam1997stability}). We also notice that when the hypersurface has empty or fixed boundary, we recover the stability theory proposed in \cite{elbert2019note}, and a fortiori, the classical definition of stability for a CMC hypersurface with empty or fixed.

It is worthwhile to say that for $H_k$-hypersurfaces, $k>1$, we give two different notions of stability, the $k$-stability and the symmetric $k$-stability, and that for (simply connected) space forms these two notions coincide. 

The paper is organized as follows. The definition of $k$-stability is given in Section \ref{tres}. In Section \ref{quatro}, we deal with capillary $H_k$-hypersurfaces of space forms and we prove:

\ 
 
{\it Capillary geodesic caps (see Section \ref{quatro} for the precise definition) supported on a totally umbilical hypersurface $\partial\Omega$ of a space form are $k$-stable.}

\ 

In Section \ref{cinco}, we give some results for free boundary $H_k$-hypersurfaces supported on a geodesic sphere of a space form when $H_k=0$ and on a totally geodesic hypersurface of a space form. In Section \ref{seis}, we discuss the symmetric $k$-stability theory, and finally, in Section \ref{sete}, we address the stability of cylinders in $M \times \mathbb{R}$.

It is natural to ask if it is possible to generalize other stability results, established for $H_1$-hypersurfaces, for $H_k$-hypersurfaces. We quote, in particular, some results for capillary $H_1$-hypersurfaces supported on special hypersurfaces $\partial\Omega$ of space forms, where $\partial\Omega$ is:
\begin{itemize}
\item a geodesic sphere \cite{wang2019uniqueness};
\item a hyperplane or a slab \cite{ainouz2016stable, souam2021stable};
\item a wedge \cite{lopez2014capillary, choe2015stable, pyo2019rigidity};
\item the boundary of a region bounded by a union of hyperplanes of $\mathbb{R}^{n+1}$ \cite{li2017stability, souam2021stable2}.
\end{itemize}
For some of these questions we have partial results and we hope we can address them in a forthcoming work.

\section{Preliminaries}
Let $\left(M^{n+1},g\right)$ be an oriented Riemannian manifold and $\varphi : \Sigma^n \rightarrow M$ be an oriented hypersurface with unit normal vector field $\eta$ in the normal bundle $N\Sigma$. Its second fundamental form $\emph{II}$, scalar second fundamental form $\emph{II}_\eta$ and Weingarten operator $A=\left(\emph{II}_\eta\right)^\flat$ are defined, respectively, as
\begin{eqnarray*}
\emph{II}\left(X,Y\right) &=& \left(\overline\nabla_XY\right)^\perp=\left<\overline\nabla_XY,\eta\right>\eta=\emph{II}_\eta\left(X,Y\right)\eta \\
\left<A(X),Y\right> &=& \emph{II}_\eta\left(X,Y\right)=\left<-\overline\nabla_X\eta,Y\right>,
\end{eqnarray*}
where $X,Y \in \Gamma(T\Sigma)$ and $\overline\nabla $ is the Levi-Civita connection of $M$. Let $\left\{\kappa_1(p),...,\kappa_n(p)\right\}$ be the principal curvatures of the hypersurface $\varphi$ at $p$.

Let $\sigma_r$ be the $r$-th symmetric elementary polynomial
\begin{equation*}
\sigma_r(x_1,...,x_n)=\begin{dcases*}
1, & \quad $r=0$ \\
\sum_{1 \leq i_1<...<i_r \leq n} x_{i_1}...x_{i_r}, & \quad $r \in \{1,...,n\}$ \\
0, & \quad $r >n.$
\end{dcases*}.\end{equation*}
The normalized mean curvature of order $r$, $H_r$, and the non-normalized mean curvature of order $r$, $S_r$, of $\Sigma$ in $p$ are defined by
\begin{equation*}
H_r(p)=\binom{n}{r}^{-1}S_r(p)=\binom{n}{r}^{-1}\sigma_r(\kappa_1(p),...,\kappa_n(p)).
\end{equation*}
The functions $S_r$ can also be deduced from the equation
\begin{equation}\label{1}
\det\left(tI-A\right)=\sum_{r=0}^n \left(-1\right)^rS_rt^{n-r}.
\end{equation}
The hypersurface is said to have constant mean curvature of order $r$ if $H_r$ is constant over $\Sigma$; when this happens, $\Sigma$ is called an \textbf{$H_r$-hypersurface}.

The Newton transformations $P_r$ associated to the $r$-mean curvatures $H_r$ are defined by
\begin{equation*}
P_r=\begin{dcases*}
I, & \quad $r=0$ \\
S_rI-AP_{r-1}, & \quad $r \geq 1$
\end{dcases*}.\end{equation*}
Since $A_p$ is self-adjoint for all $p \in \Sigma$, the Newton transformations are self-adjoint as well and their eigenvectors are the same as those of $A$. Also, since \eqref{1} holds, it follows from the Cayley-Hamilton Theorem that $P_n=0$. If $\{e_1,...,e_n\}$ denotes the eigenvectors of $A$, let $S_r\left(A_i\right)$ be the symmetric elementary polynomial of order $r$ associated to $A_i=A\vert_{e_i^\perp}$. Then we have the following properties, whose proof can be found in \cite[Lemma 2.1]{marques1997stability}:
\begin{lema}\label{002.1}
For each $r \in \{1,...,n-1\}$,
\begin{enumerate}[(i)]
\item $P_re_i=S_r(A_i)e_i$
\item $\tr\,(P_r)=\sum_{i=1}^n S_r(A_i)=(n-r)S_r$
\item $\tr\,(P_rA)=\sum_{i=1}^n \kappa_iS_r(A_i)=(r+1)S_{r+1}$
\item $\tr\,(P_rA^2)=\sum_{i=1}^n \kappa_i^2S_r(A_i)=S_1S_{r+1}-(r+2)S_{r+2}.$
\end{enumerate}
\end{lema}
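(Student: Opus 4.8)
The plan is to reduce everything to two elementary combinatorial identities for the elementary symmetric polynomials together with the recursion $P_r = S_rI - AP_{r-1}$. Writing $S_r(A_i) = \sigma_r(\kappa_1,\dots,\widehat{\kappa_i},\dots,\kappa_n)$ for the polynomial in all principal curvatures except $\kappa_i$, the first identity is the splitting relation
\[
S_r = S_r(A_i) + \kappa_i\,S_{r-1}(A_i),
\]
obtained by separating the $r$-subsets of $\{1,\dots,n\}$ according to whether they contain the index $i$. The second is the counting relation $\sum_{i=1}^n S_r(A_i) = (n-r)S_r$, which holds because a fixed $r$-subset $T$ contributes to $S_r(A_i)$ precisely for the $n-r$ indices $i\notin T$. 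Summing the splitting relation over $i$ and substituting the counting relation then gives $\sum_{i=1}^n \kappa_i\,S_{r-1}(A_i) = rS_r$, equivalently $\sum_{i=1}^n \kappa_i\,S_r(A_i) = (r+1)S_{r+1}$.

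For (i) I would argue by induction on $r$. The case $r=0$ is immediate since $P_0 = I$ and $S_0(A_i) = 1$. Assuming $P_{r-1}e_i = S_{r-1}(A_i)e_i$ and using $Ae_i = \kappa_i e_i$, the recursion yields
\[
P_re_i = S_re_i - \kappa_i\,S_{r-1}(A_i)e_i = \bigl(S_r - \kappa_i\,S_{r-1}(A_i)\bigr)e_i = S_r(A_i)e_i
\]
by the splitting relation. The one point that deserves care is the case of non-simple principal curvatures, where an ``eigenbasis'' must be chosen: here one observes that each $P_r$ is a polynomial in $A$, hence self-adjoint and diagonal in any orthonormal eigenbasis $\{e_i\}$ of $A$, so the statement makes sense and the argument goes through verbatim (alternatively, one uses continuity and density in the space of Weingarten operators).

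Statements (ii)–(iv) are then traces evaluated in the eigenbasis $\{e_i\}$. Part (ii) is $\tr P_r = \sum_i\langle P_re_i,e_i\rangle = \sum_i S_r(A_i) = (n-r)S_r$ by the counting relation. Part (iii) is $\tr(P_rA) = \sum_i \kappa_i\langle P_re_i,e_i\rangle = \sum_i \kappa_i\,S_r(A_i) = (r+1)S_{r+1}$ by the consequence of the splitting relation derived above. For (iv) I would rewrite $\kappa_i\,S_r(A_i) = S_{r+1} - S_{r+1}(A_i)$ (the splitting relation at level $r+1$), multiply by $\kappa_i$, and sum:
\[
\tr(P_rA^2) = \sum_i \kappa_i^2\,S_r(A_i) = S_{r+1}\sum_i \kappa_i - \sum_i \kappa_i\,S_{r+1}(A_i) = S_1S_{r+1} - (r+2)S_{r+2},
\]
using (iii) with $r$ replaced by $r+1$ in the last step. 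No step presents a genuine obstacle; the only subtlety is the treatment of repeated principal curvatures in (i), which the ``polynomial in $A$'' observation resolves cleanly, after which (ii)–(iv) are purely formal.
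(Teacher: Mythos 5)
Your argument is correct, and it is essentially the standard proof: the paper itself does not prove Lemma \ref{002.1} but cites \cite[Lemma 2.1]{marques1997stability}, where the identities are obtained in the same way — induction on the recursion $P_r=S_rI-AP_{r-1}$ combined with the splitting relation $S_r=S_r(A_i)+\kappa_i S_{r-1}(A_i)$ and the counting relation $\sum_i S_r(A_i)=(n-r)S_r$, with traces computed in an orthonormal eigenbasis of $A$. Your remark that $P_r$ is a polynomial in $A$, so the statement is insensitive to repeated principal curvatures, is a correct and welcome clarification.
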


The next Lemma provide some useful inequalities involving the functions $H_r$.
\begin{lema}\label{002.2}
Suppose that $\kappa_1,...,\kappa_n \geq 0$. Then
\begin{enumerate}[(i)]
\item $H_{r-1}H_{r+1} \leq H_r^2$, $r \in \{1,...,n-1\}$
\item $H_1 \geq H_2^{1/2} \geq ... \geq H_n^{1/n}$
\item $H_1H_{r+1} \geq H_{r+2}$, $r \in \{0,...,n-2\}$
\end{enumerate}
and the previous inequalities are identities if and only if $\kappa_1=...=\kappa_n$.
\end{lema}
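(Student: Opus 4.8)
The plan is to deduce Lemma \ref{002.2} from the classical Newton--Maclaurin inequalities, treating the three items in a logical order that lets each build on the previous one. I would begin with item (i), the fundamental inequality $H_{r-1}H_{r+1}\le H_r^2$, which is the genuinely substantive statement; items (ii) and (iii) then follow by elementary manipulation. For (i), the standard route is the following: since $\kappa_1,\dots,\kappa_n\ge 0$, one considers the polynomial $p(t)=\prod_{i=1}^n(t+\kappa_i)=\sum_{r=0}^n\binom{n}{r}H_r t^{n-r}$, whose roots $-\kappa_i$ are all real. By Rolle's theorem the derivative $p'$ has $n-1$ real roots, and iterating differentiation one sees that for any $r$ the $(r-1)$-st derivative, suitably normalized, is a quadratic (after dividing out a power of $t$) with real roots; writing down the discriminant condition of that quadratic yields exactly $H_{r-1}H_{r+1}\le H_r^2$. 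The equality case: at each application of Rolle the root separation is strict unless consecutive roots coincide, and unwinding this forces $\kappa_1=\dots=\kappa_n$. Alternatively, and perhaps more cleanly for the write-up, I would simply cite the classical Newton and Maclaurin inequalities (e.g.\ as collected in Hardy--Littlewood--P\'olya or in \cite{marques1997stability}), since these are completely standard facts about symmetric functions of nonnegative reals.

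Next I would derive (ii). The chain $H_1\ge H_2^{1/2}\ge\dots\ge H_n^{1/n}$ follows from (i) by a telescoping argument: from $H_{r-1}H_{r+1}\le H_r^2$ one gets $H_{r+1}/H_r\le H_r/H_{r-1}$ whenever the quantities are positive, so the ratios $H_r/H_{r-1}$ are nonincreasing in $r$; starting from $H_1/H_0=H_1$ and multiplying the first $r$ of these inequalities in the appropriate way gives $H_r\le H_1^r$, i.e.\ $H_r^{1/r}\le H_1$, and more generally $H_{r+1}^{1/(r+1)}\le H_r^{1/r}$. One has to handle the degenerate cases where some $H_r$ vanishes: if $\kappa_i\ge 0$ and some $H_r=0$ then (by (i) or directly) all higher $H_s$ vanish too, and the inequalities hold trivially with the convention $0^{1/s}=0$. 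Equality throughout again forces all $\kappa_i$ equal, inherited from the equality case of (i).

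Finally, item (iii), $H_1H_{r+1}\ge H_{r+2}$, is immediate from (ii): indeed $H_{r+2}\le H_{r+1}^{(r+2)/(r+1)}$ is \emph{not} quite what I want directly, so instead I argue $H_{r+2}=H_{r+1}\cdot(H_{r+2}/H_{r+1})\le H_{r+1}\cdot(H_{r+1}/H_r)\le\dots\le H_{r+1}\cdot(H_1/H_0)=H_1H_{r+1}$, using the monotonicity of the ratios established in (ii) (again with the convention handling the zero cases). Equality forces $\kappa_1=\dots=\kappa_n$ as before.

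The only real obstacle is the first inequality (i) together with its equality case; everything else is bookkeeping. In the interest of keeping the paper self-contained but not bloated, I expect the cleanest presentation is to state that these are the classical Newton--Maclaurin inequalities for nonnegative reals, give the one-line Rolle/discriminant idea for (i), and spell out the short telescoping derivations of (ii) and (iii), with a remark that the equality characterization propagates from (i) through each step. Care must be taken only with the sign conventions (all $\kappa_i\ge 0$ makes all $S_r\ge 0$, so there are no issues of dividing by negative quantities) and with the degenerate vanishing cases.
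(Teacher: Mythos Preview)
Your proposal is correct and matches the paper's approach: the paper simply cites Hardy--Littlewood--P\'olya for (i) and (ii) and remarks that (iii) is a direct consequence of (ii), which is exactly what you outline (with the added bonus that you sketch the Rolle/discriminant argument and the telescoping derivation rather than only citing). Your derivation of (iii) via the monotonicity of the ratios $H_{r+1}/H_r$ is equivalent to the one-line route $H_1 H_{r+1}\ge H_{r+1}^{1/(r+1)}\cdot H_{r+1}=H_{r+1}^{(r+2)/(r+1)}\ge H_{r+2}$ obtained straight from (ii).
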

The proofs of (i) and (ii) can be found on \cite[p. 52]{hardy1952inequalities}, whereas (iii) is a direct consequence of (ii).

\

In a general Riemannian manifold $(M,g)$ with Levi-Civita connection $\overline\nabla$, if $\phi$ is a pointwise symmetric $(2,0)$-tensor in $M$, the Cheng-Yau operator of $f \in C^\infty(M)$ is defined by
\begin{equation*}
\Box f=\tr\left(\phi\left(\hess f\right)^\flat\right),
\end{equation*}
where $\hess f$ is the Hessian of $f$ in $M$ and $\left(\hess f\right)^\flat$ is the metric $(1,1)$-tensor field on $M$ equivalent to $\hess f$. After some basic computations the Cheng-Yau operator can be written as $$\Box f=\dive\left(\phi\overline\nabla f\right)-\left<\dive\phi,\overline\nabla f\right>,$$ where $\dive\phi:=\tr\left(\overline\nabla\phi\right)$. The operator $\phi$ is said to be divergence free if $\dive\phi=0$. In \cite[Theorem 4.1]{rosenberg1993hypersurfaces}, H. Rosenberg proved that $P_r$ is divergence free when $M$ has constant sectional curvature (see also \cite[Corollary 3.7]{elbert2002constant} for the case where $r=1$ and $M$ is Einstein). 

When considering an oriented hypersurface $\varphi : \Sigma \rightarrow M$ with shape operator $A$, the $L_r$-operator of $\Sigma$ is defined as the Cheng-Yau operator for the Newton transformation $P_r$, i.e.,
\begin{equation*}
L_rf=\tr\left(P_r\left(\hess f\right)^\flat\right), \quad f \in C^\infty(\Sigma).
\end{equation*}
Here, we say that $-L_r$ is a second-order elliptic differential operator when $P_r$ is positive definite on each point of $\Sigma$.

The next Lemma provides conditions that guarantee the ellipticity of $L_r$.
\begin{lema}\label{002.3}
With the same notation:
\begin{enumerate}[(i)]
\item If $H_2>0$ then, after a choice of orientation on $\varphi$, $P_1$ is positive definite.
\item If $H_{r+1}>0$ for $r>1$ and if there exists a point $p_0 \in \Sigma$ such that every principal curvature of $\Sigma$ in $p_0$ is positive then $P_j$ is positive definite for all $1 \leq j \leq r$.
\item If $H_{r+1}=0$ and the rank of $A$ is greater than $r$ then $P_r$ is definite.
\end{enumerate}
\end{lema}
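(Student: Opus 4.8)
The plan is to prove each of the three claims of Lemma~\ref{002.3} by reducing everything to the pointwise linear-algebra facts about the symmetric functions $S_r(A_i)$ recorded in Lemma~\ref{002.1}, together with the Gårding-type inequalities of Lemma~\ref{002.2}. Fix a point $p\in\Sigma$ and diagonalize $A_p$ in the basis $\{e_1,\dots,e_n\}$; by Lemma~\ref{002.1}(i) the operator $P_j$ is diagonal in the same basis with eigenvalues $S_j(A_i)=S_j(\kappa_1,\dots,\widehat{\kappa_i},\dots,\kappa_n)$, so positive definiteness of $P_j$ at $p$ is exactly the statement that $S_j(A_i)>0$ for every $i$.

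For (i): assuming $H_2>0$ we have $S_2>0$, hence $\sigma_1^2=(\sum\kappa_i)^2=\sum\kappa_i^2+2S_2>0$, so $S_1=\sigma_1\neq 0$; choosing the orientation $\eta$ so that $S_1>0$, i.e. $H_1>0$, I would show $S_1(A_i)=S_1-\kappa_i>0$ for all $i$. The point is the elementary identity $S_1(A_i)^2 \ge 2S_2(A_i) + (\text{something})$; more directly, $2S_2 = S_1^2 - \sum_j\kappa_j^2$ and a short computation gives $S_1\,S_1(A_i) = S_1^2-S_1\kappa_i \ge S_1^2 - \tfrac12(S_1^2+\kappa_i^2 \cdot(\text{n-factor}))$ — I will instead use the cleaner route: $S_1(A_i)^2 = (S_1-\kappa_i)^2 \ge (S_1-\kappa_i)^2 - \bigl(\sum_{j\ne i}\kappa_j\bigr)^2 + \bigl(\sum_{j\ne i}\kappa_j\bigr)^2$ and note $2S_2(A_i) = (\sum_{j\ne i}\kappa_j)^2 - \sum_{j\ne i}\kappa_j^2$, while $S_2(A_i) = S_2 - \kappa_i S_1(A_i)$. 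Combining $S_2>0$ with $S_2(A_i)=S_2-\kappa_iS_1(A_i)$ and the sign of $S_1$, one deduces $S_1(A_i)>0$. This is the standard argument (cf. Hounie–Leite, or Barbosa–Colares); I will present it compactly.

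For (ii): I will argue by connectedness. Let $U=\{p\in\Sigma: \text{all }\kappa_i(p)>0\}$; by hypothesis $p_0\in U$, so $U\neq\emptyset$, and $U$ is clearly open. On $U$ all $S_j(A_i)>0$, so each $P_j$ ($1\le j\le r$) is positive definite there. To see $U=\Sigma$ I show $U$ is closed: if $p\in\partial U$ then by continuity $H_{r+1}(p)>0$ still, and all $\kappa_i(p)\ge 0$; were some $\kappa_i(p)=0$, expanding $S_{r+1}(\kappa_1,\dots,\kappa_n)$ along that index gives $S_{r+1}(p)=S_{r+1}(A_i)(p)$, but then by Lemma~\ref{002.2}(ii) applied to the $(n-1)$ nonnegative curvatures in $A_i$ — no: more simply, since one curvature vanishes and the rest are $\ge 0$, we get $S_{r+1}\le$ (a product forcing it to be) — I will use that $H_{r+1}(p)>0$ with nonnegative curvatures forces, by Lemma~\ref{002.2}(ii) ($H_{r+1}^{1/(r+1)}\le H_1$ and the fact that $H_{r+1}>0$ with $\kappa_i\ge0$ implies at least $r+1$ of them are positive), a contradiction with having a zero among them only if... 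Cleaner: $H_{r+1}>0$ and all $\kappa\ge0$ imply $H_j>0$ for $1\le j\le r+1$ by Lemma~\ref{002.2}(i) (Newton's inequalities run both ways among positive terms), and then the eigenvalues $S_j(A_i)$ are positive because removing one nonnegative curvature from a positive $S_{j+1}$-situation keeps $S_j$ positive; in particular no $\kappa_i$ can vanish. Hence $U$ is open and closed, so $U=\Sigma$.

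For (iii): assuming $H_{r+1}=0$, i.e. $S_{r+1}\equiv 0$, and $\operatorname{rank}A>r$ at $p$. Then at least $r+1$ of the $\kappa_i$ are nonzero. I claim each $S_r(A_i)(p)\neq 0$ and all have the same sign, which gives that $P_r$ is definite (positive or negative). The key identity is $S_{r+1} = \kappa_i S_r(A_i) + S_{r+1}(A_i)$, so $S_{r+1}=0$ yields $\kappa_i S_r(A_i) = -S_{r+1}(A_i)$ for every $i$; iterating, one relates the $S_r(A_i)$ across indices. The standard fact (this is essentially the content of \cite[Lemma 2.1 ff.]{marques1997stability} or Hounie–Leite) is: if $S_{r+1}=0$ and $\operatorname{rank}A\ge r+1$ then $S_r(A_i)$ never vanishes and has constant sign in $i$. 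I will prove it by ordering $\kappa_1\ge\cdots\ge\kappa_n$, noting that $S_{r+1}=0$ with rank $>r$ forces the nonzero curvatures to have mixed signs in a controlled way, and then checking the sign of $S_r(A_i)$ directly, or by invoking that $P_r$ and $P_{r+1}$ satisfy $P_{r+1}=S_{r+1}I - AP_r = -AP_r$, so $\det P_{r+1} = (-1)^n\det A \det P_r$; combined with a rank count this forces $P_r$ nonsingular, and a continuity/sign argument within a single tangent space fixes the definiteness.

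The main obstacle will be part (iii): unlike (i) and (ii) there is no positivity hypothesis to lean on, so the argument must exploit the algebraic rigidity of the condition $S_{r+1}=0$ together with the rank assumption to simultaneously rule out vanishing of any $S_r(A_i)$ and pin down a common sign. I expect to handle this by a careful case analysis on the signs and multiplicities of the $\kappa_i$, or by citing the corresponding statement in \cite{hounie1999two} / \cite{marques1997stability} and adapting it; parts (i) and (ii) are routine once Lemma~\ref{002.1} and Lemma~\ref{002.2} are in hand.
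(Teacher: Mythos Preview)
The paper does not prove this lemma at all: it simply cites \cite[Lemma 3.10]{elbert2002constant}, \cite[Proposition 3.2]{cheng2005embedded}, and \cite[Corollary 2.3]{Hounie1995maximum} for (i), (ii), (iii) respectively. So there is no in-paper argument to compare against; the only question is whether your sketch is correct.

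Your outline for (i) is disorganised but the right computation is in there. The clean version: from $S_2>0$ one has $S_1^2=\sum_j\kappa_j^2+2S_2>\kappa_i^2$ for every $i$, so $|S_1|>|\kappa_i|$; choosing the orientation with $S_1>0$ gives $S_1(A_i)=S_1-\kappa_i>0$, hence $P_1$ is positive definite. For (iii) you are, in the end, planning to cite Hounie--Leite, which is exactly what the paper does.

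Part (ii), however, has a genuine gap. You set $U=\{p:\text{all }\kappa_i(p)>0\}$ and try to prove $U=\Sigma$. This is strictly stronger than what the lemma asserts and is in general \emph{false} under the stated hypotheses. For instance with $n=4$, $r=2$ and $\kappa=(4,4,4,-1)$ one has $S_3=16>0$ while $P_1,P_2$ are positive definite (eigenvalues $7,7,7,12$ and $8,8,8,48$), yet $\kappa_4<0$; and along a path from $(4,4,4,4)$ to $(4,4,4,-1)$ through $(4,4,4,0)$ one keeps $S_3>0$ throughout. So your closedness step (``all $\kappa_i\ge 0$ and $H_{r+1}>0$ force all $\kappa_i>0$'') fails, and the set $U$ need not be all of $\Sigma$.

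The connectedness argument must be run on the correct set. Take $V=\{p:\kappa(p)\in\Gamma_{r+1}\}$, where $\Gamma_{r+1}$ is the G\aa rding cone, i.e.\ the connected component of $\{\sigma_{r+1}>0\}\subset\mathbb{R}^n$ containing the positive orthant (equivalently $\{\sigma_1>0,\dots,\sigma_{r+1}>0\}$). Then $p_0\in V$, $V$ is open, and $V$ is closed because $\partial\Gamma_{r+1}\subset\{\sigma_{r+1}=0\}$ while $H_{r+1}>0$ on $\Sigma$; hence $V=\Sigma$. On $\Gamma_{r+1}$ it is a standard fact (this is the content of the cited reference, and ultimately of G\aa rding's theory or the Caffarelli--Nirenberg--Spruck lemmas) that $S_j(A_i)=\partial\sigma_{j+1}/\partial\kappa_i>0$ for every $i$ and every $1\le j\le r$, which is exactly the positive definiteness of $P_j$. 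Your ``cleaner'' paragraph is groping toward this, but the insistence on concluding ``no $\kappa_i$ can vanish'' is what derails it.
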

The proofs of these statements can be found in \cite[Lemma 3.10]{elbert2002constant}, \cite[Proposition 3.2]{cheng2005embedded} and \cite[Corollary 2.3]{Hounie1995maximum}, respectively.
\bigskip

Let $\Omega \subseteq M$ be a closed domain with smooth boundary $\partial\Omega$ and assume that $\varphi : \Sigma \rightarrow M$ is an oriented hypersurface such that $\varphi(\Sigma) \subseteq \Omega$ and $\varphi(\partial\Sigma) \subseteq \partial\Omega$. Let $\nu \in \Gamma\left(T\Sigma\vert_{\partial\Sigma}\right)$ be the unit outward conormal vector field on $\partial\Sigma$ and let $\overline\nu \in \Gamma\left(T\partial\Omega\vert_{\partial\Sigma}\right)$ and $\overline\eta \in \Gamma\left(TM\vert_{\partial\Omega}\right)$ be the unit normal vector fields associated to the immersions $\varphi\vert_{\partial\Sigma} : \partial\Sigma \rightarrow \partial\Omega$ and $\iota_{\partial\Omega} : \partial\Omega \hookrightarrow M$, respectively, such that $\left\{\nu,\eta\right\}$ has the same orientation as $\left\{\overline\nu,\overline\eta\right\}$ on each point of $\varphi(\partial\Sigma)$. If $\theta$ denotes the angle between $\nu$ and $\overline\nu$, then
\begin{equation}\label{0002}\begin{dcases*}
\nu=\cos\theta\,\overline\nu+\sin\theta\,\overline\eta \\
\eta=-\sin\theta\,\overline\nu+\cos\theta\,\overline\eta
\end{dcases*}.\end{equation}
or conversely,
\begin{equation}\label{0092}\begin{dcases*}
\overline\nu=\cos\theta\,\nu-\sin\theta\,\eta \\
\overline\eta=\sin\theta\,\nu+\cos\theta\,\eta
\end{dcases*}.
\end{equation}
A $H_r$-hypersurface $\varphi : \Sigma \rightarrow \Omega \subseteq M$ is said to be \textbf{capillary} if the contact angle $\theta$ between $\partial\Sigma$ and $\partial\Omega$ is constant. When $\theta=\frac{\pi}{2}$, $\varphi$ is called a \textbf{free boundary hypersurface}. When $\varphi : \Sigma \rightarrow \Omega \subseteq M$ is a hypersurface with free boundary in $\Omega$, \eqref{0002} implies that $\nu=\overline\eta$ and $\eta=-\overline\nu$.

The following result will be used throughout this article.

\begin{lema}[{\cite[Lemma 2.2]{ainouz2016stable}}]\label{002.4}
Suppose $\iota_{\partial\Omega}$ is a totally umbilical immersion into $M$ and that $\varphi$ is a capillary $H_r$-hypersurface into $M$. Then the unit outwards normal vector field $\nu \in \Gamma\left(T\Sigma\vert_{\partial\Sigma}\right)$ is a principal direction of $\varphi$.
\end{lema}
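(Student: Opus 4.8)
The plan is to work along $\partial\Sigma$ and show that $A\nu$ has no component tangent to $\partial\Sigma$. Fix a point $p\in\partial\Sigma$ and let $X\in T_p(\partial\Sigma)$ be arbitrary; since $T_p\Sigma=T_p(\partial\Sigma)\oplus\mathbb{R}\nu$, it suffices to prove $\langle A\nu,X\rangle = \langle -\overline\nabla_\nu\eta, X\rangle = 0$. I would express $\eta$ via the second relation in \eqref{0002}, namely $\eta=-\sin\theta\,\overline\nu+\cos\theta\,\overline\eta$, where $\theta$ is constant by the capillarity hypothesis. Differentiating along $\nu$ and pairing with $X$ (which is tangent to $\partial\Omega$, hence $\langle X,\overline\eta\rangle=0$ on $\partial\Sigma$), the constancy of $\theta$ kills the terms in $\overline\nabla_\nu\theta$, and one is left with
\[
\langle A\nu,X\rangle \;=\; \sin\theta\,\langle \overline\nabla_\nu\overline\nu,X\rangle \;-\;\cos\theta\,\langle \overline\nabla_\nu\overline\eta,X\rangle.
\]

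Next I would interpret each of these two terms. The term $\langle \overline\nabla_\nu\overline\eta,X\rangle$ is, up to sign, the second fundamental form of $\partial\Omega$ in $M$ evaluated on the pair $(\nu^{\top},X^{\top})$ of vectors tangent to $\partial\Omega$; here $\nu^\top$ means the projection of $\nu$ onto $T(\partial\Omega)$, but by \eqref{0002} $\nu = \cos\theta\,\overline\nu+\sin\theta\,\overline\eta$, so the $T(\partial\Omega)$-component of $\nu$ is $\cos\theta\,\overline\nu$. Since $\iota_{\partial\Omega}$ is totally umbilical, its shape operator is a multiple of the identity, say $\lambda\,\id$, so $\langle \overline\nabla_\nu\overline\eta,X\rangle = -\lambda\langle \cos\theta\,\overline\nu, X\rangle$. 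For the term $\langle\overline\nabla_\nu\overline\nu,X\rangle$ I would similarly decompose: $\overline\nu$ is tangent to $\partial\Omega$, and $X\in T(\partial\Sigma)$, so this is controlled by the second fundamental form of $\partial\Sigma$ inside $\partial\Omega$ together with the normal data $\overline\nu$; writing $\overline\nabla_\nu\overline\nu$ in the frame adapted to $\partial\Sigma\subset\partial\Omega\subset M$ and using $\langle\overline\nu,X\rangle=0$ along $\partial\Sigma$ (since $X$ is tangent to $\partial\Sigma$ while $\overline\nu$ is normal to it in $\partial\Omega$), the relevant piece again reduces to $\langle A\nu,X\rangle$-type terms plus curvature of $\partial\Omega$. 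Assembling these, the totally umbilical hypothesis forces the tangential part to vanish.

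Concretely, the cleanest route is: set $f(X):=\langle A\nu,X\rangle$ for $X\in T(\partial\Sigma)$ and show $f$ is simultaneously equal to something symmetric and something antisymmetric, or directly that $f(X)=c\,\langle\overline\nu,X\rangle=0$ with $c$ built from $\lambda$ and $\theta$. The key algebraic fact driving everything is that for a \emph{umbilical} $\partial\Omega$ the Weingarten map has no off-diagonal entries in \emph{any} orthonormal frame, so mixing the $\overline\nu$ and $\overline\eta$ directions produces nothing. I would carry this out by choosing, at $p$, an orthonormal basis $\{e_1,\dots,e_{n-1}\}$ of $T_p(\partial\Sigma)$ together with $\nu$, writing the $1$-form $Y\mapsto\langle\overline\nabla_Y\eta,\nu\rangle$ restricted to $T(\partial\Sigma)$, and using \eqref{0002}--\eqref{0092} to convert $\overline\nabla\eta$ into $\overline\nabla\overline\eta$ and $\overline\nabla\overline\nu$, then invoking $\overline\nabla_Y\overline\eta=-\lambda Y$ for $Y\in T(\partial\Omega)$.

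The main obstacle I anticipate is bookkeeping the various normal bundles: $\nu$ is tangent to $\Sigma$ but not to $\partial\Omega$, $\overline\nu$ is tangent to $\partial\Omega$ but not to $\Sigma$, and one must keep careful track of which vectors lie in which subspace before applying the umbilicity of $\partial\Omega$ — in particular the cross term $\langle\overline\nabla_\nu\overline\nu,X\rangle$ requires splitting $\overline\nabla_\nu\overline\nu$ into its $T(\partial\Sigma)$, $\overline\nu$, and $\overline\eta$ parts and recognizing that only the part of $A\nu$ we are trying to isolate survives. Once the relations \eqref{0002} and the identity $\overline\nabla_X\overline\eta=-\lambda X$ are in hand, the computation is short; the subtlety is purely in organizing the decompositions so that the constancy of $\theta$ and the umbilicity of $\partial\Omega$ can both be used at the right moment.
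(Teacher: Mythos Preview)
The paper does not prove this lemma; it merely cites \cite[Lemma 2.2]{ainouz2016stable}. So there is no ``paper's own proof'' to compare against, and the question is simply whether your argument stands on its own.

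There is a genuine gap. You compute $\langle A\nu,X\rangle=\langle-\overline\nabla_\nu\eta,X\rangle$ and then substitute the decomposition $\eta=-\sin\theta\,\overline\nu+\cos\theta\,\overline\eta$ from \eqref{0002} before differentiating in the direction $\nu$. The problem is that \eqref{0002} holds only along $\partial\Sigma$, and $\nu$ is transverse to $\partial\Sigma$ (indeed, $\nu=\cos\theta\,\overline\nu+\sin\theta\,\overline\eta$ is not tangent to $\partial\Omega$ unless $\theta=0$). Consequently the expressions $\overline\nabla_\nu\overline\eta$ and $\overline\nabla_\nu\overline\nu$ are not well defined without first choosing extensions of $\overline\eta$ and $\overline\nu$ off $\partial\Omega$ (respectively off $\partial\Sigma$), and their values depend on those extensions. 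In particular, your identification of $\langle\overline\nabla_\nu\overline\eta,X\rangle$ with $-\lambda\langle\cos\theta\,\overline\nu,X\rangle$ via umbilicity is not justified: the second fundamental form of $\partial\Omega$ only controls $\overline\nabla_Y\overline\eta$ for $Y$ \emph{tangent} to $\partial\Omega$. You flag exactly this obstacle at the end of your proposal but do not resolve it.

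The fix is short: use the symmetry of $A$ to move the derivative onto $X$ instead. Since $\langle A\nu,X\rangle=\emph{II}_\eta(X,\nu)=\langle\overline\nabla_X\nu,\eta\rangle$ and $X\in T(\partial\Sigma)\subset T(\partial\Omega)$, you may legitimately differentiate \eqref{0002} along $X$. Constancy of $\theta$ and the identities $\langle\overline\nabla_X\overline\nu,\overline\nu\rangle=\langle\overline\nabla_X\overline\eta,\overline\eta\rangle=0$, $\langle\overline\nabla_X\overline\nu,\overline\eta\rangle=-\langle\overline\nabla_X\overline\eta,\overline\nu\rangle$ give
\[
\langle A\nu,X\rangle=\langle\overline\nabla_X\overline\nu,\overline\eta\rangle=(\emph{II}_{\partial\Omega})_{\overline\eta}(X,\overline\nu)=\lambda\langle X,\overline\nu\rangle=0,
\]
the last step because $\overline\nu\perp T(\partial\Sigma)$. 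This is the standard argument and avoids any extension issues.
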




\section{Stability of capillary $H_{r+1}$-hypersurfaces}\label{tres}
The notion of stability for $H_{r+1}$-hypersurfaces of a Riemannian manifold with empty (or fixed) boundary is well stated in the literature. The case $r=0$ (mean curvature) was established more than 40 years ago (see \cite{barbosa1984stability, lucas1988stability}). In 1997, L. Barbosa and G. Colares \cite{marques1997stability} addressed the empty (or fixed) boundary case for $H_{r+1}$-hypersurfaces of space forms and $r>0$. More than 20 years later, in a recent paper \cite{elbert2019note}, the second author and B. Nelli generalized this notion of stability for $H_{r+1}$-hypersurfaces of a general Riemannian manifold for $r>0$.

 For the capillary case the picture is more complex, since stability is settled only for mean curvature hypersurfaces ($r=0$) of space forms (see \cite{ros1995stability, souam1997stability}). The natural generalization of the variational problem for more general ambient spaces and $r>0$ has not made progress since the late 1990s. Inspired by the approach of \cite{alaee2021stable} we propose a notion of stability for capillary $H_{r+1}$-hypersurfaces that generalizes the works \cite{ros1995stability} and \cite{ souam1997stability} in these two directions: we deal with $r>0$ and with a more general ambient space. 

A variation of $\varphi$ is a smooth function $\Phi : \Sigma \times (-\varepsilon,\varepsilon) \rightarrow M$ such that, for each $t \in (-\varepsilon,\varepsilon)$, $\varphi_t=\Phi\vert_{\Sigma \times \{t\}}$ is an isometric immersion and $\varphi_0=\varphi$. The pair $(\Sigma,\varphi_t^*g)$ will be denoted by $\Sigma_t$. The variational field of $\Phi$ in $\varphi_t$ is defined by $$\xi_t(p)=\left.\Phi_*\frac{\partial}{\partial t}\right\vert_{(p,t)} \in \Gamma\left(TM\vert_{\varphi_t(\Sigma)}\right).$$ If $\eta_t \in \Gamma(N\Sigma)$ is the unit normal vector field of $\varphi_t$, the support function of $\Phi$ at $t$ is defined by $$f_t=\left<\xi_t,\eta_t\right> \in C^\infty(\Sigma).$$ Since $\varphi_t : \Sigma \rightarrow M$ is an oriented hypersurface, one can define its second fundamental form $\emph{II}_t$, its scalar second fundamental form $\left(\emph{II}_t\right)_{\eta_t}$ and its Weingarten operator $A_t$. 
Also, we set $\overline{R}_{\eta_t}\left(X\right):=\overline{R}\left(\eta_t,X\right)\eta_t$, where $\overline{R}$ the Riemann curvature tensor of $M$ defined by $$\overline{R}(X,Y)Z=\overline\nabla_Y\overline\nabla_XZ-\overline\nabla_X\overline\nabla_YZ+\overline\nabla_{[X,Y]}Z, \quad X,Y,Z \in \Gamma(TM).$$ If $S_{r+1}(t)$ denotes the non-normalized mean curvature of $(r+1)$-th order associated to immersion $\varphi_t$, its variation is given by
\begin{equation}\label{0004}
S_{r+1}^\prime(t)=\left(L_r\right)_tf_t+\left(S_1(t)S_{r+1}(t)-(r+2)S_{r+2}(t)\right)f_t+\tr_{\Sigma_t}\left(\left(P_r\overline{R}_\eta\right)_t\right)f_t+\xi_t^\top\left(S_{r+1}(t)\right),
\end{equation}
where $\left(L_r\right)_t$ is the $L_r$-operator of immersion $\varphi_t$ and $\left(P_r\overline{R}_\eta\right)_t:=\left(P_r\right)_t \circ \overline{R}_{\eta_t}$. A proof of \eqref{0004} can be found in \cite[Proposition 3.2]{elbert2002constant}.

The enclosed volume between $\Sigma$ and $\Sigma_t$ is defined as $\mathcal{V}(t)=\int_{\Sigma \times [0,t]} \Phi^*d\mu_M$, with $d\mu_M$ being the volume form of $(M,g)$. A variation $\Phi$ is volume-preserving if $\mathcal{V}(t)=\mathcal{V}(0)$ for all $t \in (-\varepsilon,\varepsilon)$. It is known that $$\mathcal{V}^\prime(0)=\int_\Sigma f\,d\mu_\Sigma,$$ where $u=\left<\xi,\eta\right> \in C^\infty(\Sigma)$ and $d\mu_\Sigma$ is the volume form of $\left(\Sigma,\varphi^*g\right)$. Thus, a variation $\Phi$ is volume-preserving if and only if $\int_\Sigma f\,d\mu_\Sigma=0$.

When dealing with stability questions we will be interested in the cases where $P_r$ is definite. For simplicity reasons, we will assume without loss of generality that the $H_{r+1}$-hypersurface has a positive definite Newton tensor for each point of $\Sigma$. With slight modifications, we can also address the case where $P_r$ is negative definite (see Remark \ref{004.5})

\begin{defn}\label{004.1}
We say that a $H_{r+1}$-hypersurface $\varphi : \Sigma \rightarrow M$ is \textbf{positive definite} if $P_r$ is positive definite on each point $p \in \Sigma$.
\end{defn}

A variation $\Phi$ of a hypersurface $\varphi : \Sigma \rightarrow \Omega \subseteq M$ is called admissible if $\varphi_t(\inte\Sigma) \subseteq \inte\Omega$ and $\varphi_t(\partial\Sigma) \subseteq \partial\Omega$ for any $t \in (-\varepsilon,\varepsilon)$, where $\varphi_t=\Phi\vert_{\Sigma \times \{t\}}$. If $\Phi$ is an admissible variation of $\varphi$, then $\xi\vert_{\partial\Sigma} \in \Gamma\left(T\partial\Omega\vert_{\partial\Sigma}\right)$. If $\Sigma$ is a capillary $H_{r+1}$-hypersurface supported on $\partial\Omega$ with contact angle $\theta \in (0,\pi)$ and $\Phi$ is a volume-preserving admissible variation of $\varphi$, define the functional
\begin{equation}\label{0005}
\mathcal{F}_{r,\theta}[\Sigma_t]=-\int_\Sigma S_{r+1}(t)\left<\xi_t,\eta_t\right>\,d\mu_{\Sigma_t}+\int_{\partial\Sigma} \left<\xi_t,(P_r\nu-\vert{P_r\nu}\vert\cos\theta\,\overline\nu)_t\right>\,d\mu_{\partial\Sigma_t},
\end{equation}
where $d\mu_{\Sigma_t}$ and $d\mu_{\partial\Sigma_t}$ denote the volume forms of $\Sigma_t$ and $\partial\Sigma_t=\left(\partial\Sigma,\left(\varphi_t\vert_{\partial\Sigma}\right)^*g\right)$, respectively. Note that when we set $r=0$, (\ref{0005}) is the first variation formula obtained in \cite{ros1995stability, souam1997stability}.

\bigskip

It can be proved (see \cite{lucas1988stability} and \cite{ros1995stability}) that for each smooth function $f$ on $\Sigma$ there exists an admissible normal variation of $\varphi$ with variation vector field $f\eta$. If, in addition, f satisfies $\int_\Sigma f\,d\mu_\Sigma=0$, the variation is volume-preserving.

In this article we will assume that $\partial\Omega$ is a totally umbilical hypersurface of $M$.

\begin{teo}\label{004.2}
If $\partial\Omega$ is totally umbilical and $\Phi$ is an admissible volume-preserving variation of a positive definite capillary $H_{r+1}$-hypersurface $\varphi : \Sigma\rightarrow \Omega \subseteq M$ supported on $\partial\Sigma$ then
\begin{multline}\label{0006}
\left.\frac{\partial}{\partial t}\mathcal{F}_{r,\theta}\left[\Sigma_t\right]\right\vert_{t=0}=-\int_\Sigma f\left(L_rf+\tr\left(P_r\left(A^2+\overline{R}_\eta\right)\right)f\right)\,d\mu_\Sigma+\\+\int_{\partial\Sigma} \left\vert{P_r\nu}\right\vert\,f\left(\frac{\partial f}{\partial\nu}+\left(\csc\theta\left(\emph{II}_{\partial\Omega}\right)_{\overline\eta}(\overline\nu,\overline\nu)-\cot\theta\left(\emph{II}_\Sigma\right)_\eta(\nu,\nu)\right)f\right)\,d\mu_{\partial\Sigma},
\end{multline}
where $f=\left<\xi,\eta\right> \in C^\infty(\Sigma)$ is the support function of $\Phi$ at $t=0$ and $\emph{II}_\Sigma$ and $\emph{II}_{\partial\Omega}$ are the second fundamental forms of $\varphi$ and $\iota_{\partial\Omega} : \partial\Omega \hookrightarrow \Omega$, respectively.
\end{teo}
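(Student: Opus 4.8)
The plan is to differentiate the functional $\mathcal F_{r,\theta}[\Sigma_t]$ in \eqref{0005} at $t=0$, treating the interior term and the boundary term separately, and then combine them using the capillarity hypothesis together with Lemma \ref{002.4}. For the interior term $-\int_\Sigma S_{r+1}(t)\langle\xi_t,\eta_t\rangle\,d\mu_{\Sigma_t}$, I would first reduce to a normal variation: since $\mathcal F_{r,\theta}[\Sigma_0]=0$ at the critical point and the first variation of a critical point depends only on the normal component $f=\langle\xi,\eta\rangle$ of the variational field (this is the standard argument that tangential reparametrizations do not affect the second variation at a critical point), it suffices to compute $\frac{\partial}{\partial t}\big|_{t=0}$ assuming $\xi=f\eta$ and $S_{r+1}(0)=S_{r+1}$ constant. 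Then I differentiate the product: the $d\mu_{\Sigma_t}$ variation contributes a term proportional to $S_{r+1}\,\langle\cdot\rangle$ which, after using that the variation is volume-preserving (so $\int_\Sigma f\,d\mu_\Sigma=0$) and that $S_{r+1}$ is constant, integrates to zero; the $\langle\xi_t,\eta_t\rangle$ factor has derivative $f$ at $t=0$ for a normal variation; and the crucial term is $-\int_\Sigma S_{r+1}'(0)\,f\,d\mu_\Sigma$, for which I substitute formula \eqref{0004}. Because $\partial\Omega$ is totally umbilical and $M$ need not be a space form, I cannot assume $P_r$ is divergence free in general — but for space forms it is, and more to the point the term $\tr_{\Sigma}(P_r\overline R_\eta)f$ appearing in \eqref{0004} is exactly what produces the $\tr(P_r\overline R_\eta)$ piece in \eqref{0006}, while $(S_1S_{r+1}-(r+2)S_{r+2})$ is rewritten via Lemma \ref{002.1}(iv) as $\tr(P_rA^2)$. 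Collecting these gives the interior integrand $-f\big(L_rf+\tr(P_r(A^2+\overline R_\eta))f\big)$.

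For the boundary term $\int_{\partial\Sigma}\langle\xi_t,(P_r\nu-|P_r\nu|\cos\theta\,\overline\nu)_t\rangle\,d\mu_{\partial\Sigma_t}$, I would differentiate in $t$ and evaluate at $t=0$. Here one uses that the variation is admissible, so $\xi|_{\partial\Sigma}\in\Gamma(T\partial\Omega|_{\partial\Sigma})$, hence $\langle\xi,\overline\eta\rangle=0$ along $\partial\Sigma$; combined with \eqref{0092} this lets me express $\langle\xi,\overline\nu\rangle$ and $\langle\xi,\nu\rangle$ in terms of $f$ and the contact angle $\theta$. By Lemma \ref{002.4}, $\nu$ is a principal direction of $\varphi$, so $P_r\nu=S_r(A_\nu)\nu$ is parallel to $\nu$ with $|P_r\nu|=S_r(A_\nu)$ (positive, since the hypersurface is positive definite); thus $P_r\nu-|P_r\nu|\cos\theta\,\overline\nu = |P_r\nu|(\nu-\cos\theta\,\overline\nu)=|P_r\nu|\sin\theta\,\overline\eta$ by \eqref{0002}. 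This identity is what makes the boundary integrand manageable: the $t=0$ integrand is $|P_r\nu|\sin\theta\,\langle\xi,\overline\eta\rangle$, which vanishes, and its $t$-derivative involves $\frac{\partial}{\partial t}\langle\xi_t,\overline\eta_t\rangle$ together with the $t$-derivatives of $|P_r\nu|$, $\theta$ (the angle is constant along each $\Sigma_t$ only for the reference immersion — for the variation, $\theta_t$ may vary, and one must track $\theta'(0)$), and $d\mu_{\partial\Sigma_t}$. The normal derivative $\partial f/\partial\nu$ enters through the interior integration by parts: writing $\int_\Sigma f L_r f = -\int_\Sigma \langle P_r\nabla f,\nabla f\rangle + \int_{\partial\Sigma} f\,\langle P_r\nabla f,\nu\rangle$ and using that $\nu$ is a principal direction to get $\langle P_r\nabla f,\nu\rangle = |P_r\nu|\,\partial f/\partial\nu$ — wait, this is in $\mathcal F$'s second variation, not its first; in fact the first variation of $\mathcal F$ should produce \eqref{0006} directly without integrating $L_rf$ by parts, so the $\partial f/\partial\nu$ term must come from the boundary-term derivative instead. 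I would therefore carefully expand $\frac{\partial}{\partial t}\big|_{t=0}\langle\xi_t,\overline\eta_t\rangle$ using the variation of the normal $\overline\eta_t$ to $\partial\Omega$ restricted to $\partial\Sigma_t$, which brings in both the second fundamental form $(\emph{II}_{\partial\Omega})_{\overline\eta}$ and, through the decomposition \eqref{0092}, the term $\partial f/\partial\nu$.

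The main obstacle I anticipate is the careful bookkeeping in the boundary-term differentiation: one must correctly identify how $\overline\eta_t$, $\overline\nu_t$, $\nu_t$, $\eta_t$ and the angle $\theta_t$ all vary, and then show that the messy collection of terms assembles into the clean coefficient $\csc\theta\,(\emph{II}_{\partial\Omega})_{\overline\eta}(\overline\nu,\overline\nu)-\cot\theta\,(\emph{II}_\Sigma)_\eta(\nu,\nu)$. The key geometric inputs making this collapse are: (a) $\partial\Omega$ totally umbilical, so $(\emph{II}_{\partial\Omega})_{\overline\eta}$ is a multiple of the metric and $(\emph{II}_{\partial\Omega})_{\overline\eta}(\overline\nu,\overline\nu)$ controls all tangential directions uniformly; (b) Lemma \ref{002.4}, giving $\nu$ as a principal direction so that $P_r\nu$ stays parallel to $\nu$ and $|P_r\nu|$ can be pulled out of the boundary integral; and (c) the relations \eqref{0002}--\eqref{0092} linking the two orthonormal frames at the contact. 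Once the boundary integrand is shown to equal $|P_r\nu|\,f\big(\partial f/\partial\nu + (\csc\theta\,(\emph{II}_{\partial\Omega})_{\overline\eta}(\overline\nu,\overline\nu)-\cot\theta\,(\emph{II}_\Sigma)_\eta(\nu,\nu))f\big)$, adding it to the interior contribution computed above yields \eqref{0006}. For $r=0$ one has $P_0=I$, $|P_0\nu|=1$, and the formula reduces to the classical capillary CMC second variation of \cite{ros1995stability, souam1997stability}, which serves as a consistency check on the signs and coefficients.
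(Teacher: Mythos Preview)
Your overall plan matches the paper's: split $\left.\frac{\partial}{\partial t}\mathcal F_{r,\theta}\right|_{t=0}$ into an interior piece $I_1$ and a boundary piece $I_2$, handle $I_1$ via formula \eqref{0004}, and observe that the boundary integrand $\langle\xi,P_r\nu-|P_r\nu|\cos\theta\,\overline\nu\rangle=|P_r\nu|\sin\theta\langle\xi,\overline\eta\rangle$ vanishes at $t=0$ so that only its $t$-derivative survives. That is exactly the structure of Appendix~A.

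Two points where your sketch goes off track, and one where it leaves the real work undone:

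\textbf{(1) The normal-variation reduction is both unnecessary and wrongly justified.} You invoke ``the first variation of a critical point depends only on the normal component,'' but there is no underlying functional of which $\varphi$ is a critical point --- that is precisely the obstruction the paper works around by \emph{defining} $\mathcal F_{r,\theta}$ directly. The paper never reduces to $\xi=f\eta$; it keeps a general admissible $\xi$. For $I_1$ the correct argument is simply: $\frac{\partial}{\partial t}\big(S_{r+1}(t)f_t\,d\mu_{\Sigma_t}\big)=S_{r+1}'(0)f\,d\mu_\Sigma+S_{r+1}(0)\,\frac{\partial}{\partial t}\big(f_t\,d\mu_{\Sigma_t}\big)$, and since $S_{r+1}(0)$ is constant and $\int_\Sigma f_t\,d\mu_{\Sigma_t}=\mathcal V'(t)\equiv 0$ for a volume-preserving variation, the second term integrates to zero. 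The tangential term $\xi^\top(S_{r+1})$ in \eqref{0004} also vanishes because $S_{r+1}$ is constant.

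\textbf{(2) The role of total umbilicity is narrower than you suggest.} It is used only through Lemma~\ref{002.4}, to get $P_r\nu=|P_r\nu|\,\nu$ so that $|P_r\nu|$ can be pulled out as a scalar factor in the boundary computation. The cross terms $(\emph{II}_{\partial\Omega})_{\overline\eta}(\widetilde\xi,\widetilde\xi)$ and $(\emph{II}_{\partial\Omega})_{\overline\eta}(\widetilde\xi,\overline\nu)$ that arise cancel \emph{algebraically} between the two pieces of the boundary derivative (the paper's \eqref{0088} and \eqref{0087}); umbilicity is not what makes them disappear.

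\textbf{(3) The substance of the proof is the boundary computation, which you do not carry out.} The paper writes $I_2=\int_{\partial\Sigma}|P_r\nu|\big(\langle\overline\nabla_\xi\xi,\nu-\cos\theta\,\overline\nu\rangle+\langle\xi,\overline\nabla_\xi\nu-\cos\theta\,\overline\nabla_\xi\overline\nu\rangle\big)\,d\mu_{\partial\Sigma}$ and then computes $\overline\nabla_\xi\eta$, $\overline\nabla_\xi\nu$, $\overline\nabla_\xi\overline\nu$ explicitly, using the boundary decomposition $\xi=\widetilde\xi-f\csc\theta\,\overline\nu$ (with $\widetilde\xi$ tangent to $\partial\Sigma$). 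The term $\partial f/\partial\nu$ enters through $\langle\nabla f,\nu\rangle$ in the expansion of $\overline\nabla_\xi\nu$ --- your second guess was correct. There is no need to track $\theta'(0)$ separately; the covariant-derivative computation handles everything. Your proposal acknowledges this step as ``the main obstacle'' but does not execute it; that is where the proof actually lives.
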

A proof of Theorem \ref{004.2} will be given in the Appendix.

\begin{defn}\label{004.3}
A positive definite capillary $H_{r+1}$-hypersurface $\varphi : \Sigma \rightarrow \Omega \subseteq M$ supported on $\partial\Omega$ with contact angle $\theta \in (0,\pi)$ is \textbf{$r$-stable} if $\left.\frac{\partial}{\partial t}\mathcal{F}_{r,\theta}\left[\Sigma_t\right]\right\vert_{t=0} \geq 0$ for any volume-preserving admissible variation $\Phi$ of $\varphi$. If the inequality holds for all admissible variations of $\varphi$, $\Sigma$ is said to be \textbf{strongly $r$-stable}.
\end{defn}

Associated to \eqref{0006} we have the following eigenvalue problem:
\begin{equation}\label{0007}\begin{dcases*}
T_rf=-L_rf-q_rf=\lambda f, & \quad $\text{in}~\Sigma$ \\
\frac{\partial f}{\partial\nu}+\alpha_\theta f=0, & \quad $\text{on}~\partial\Sigma$
\end{dcases*},\end{equation}
where $q_r=\tr\left(P_r\left(A^2+\overline{R}_\eta\right)\right) \in C^\infty(\Sigma)$ and $\alpha_\theta=\csc\theta\left(\emph{II}_{\partial\Omega}\right)_{\overline\eta}(\overline\nu,\overline\nu)-\cot\theta\left(\emph{II}_\Sigma\right)_\eta(\nu,\nu) \in C^\infty(\partial\Sigma)$.

\ 

As we mentioned in the introduction, in a general ambient space, $T_r$ is not a "divergence form" operator and thus, \eqref{0006} gives rise to a non-symmetric bilinear form. Although $T_r$ is not self-adjoint, there exists a real eigenvalue $\lambda_p=\lambda_p(T_r)$ (see \cite[Theorem 3.1]{li2017eigenvalue}) called the \textbf{principal eigenvalue} such that any other eigenvalue $\lambda \in \mathbb{C}$ satisfies $\re\lambda \geq \lambda_p$. Also, the associated eigenspace has dimension equal to one and the associated eigenfunction is (strictly) positive.

\ 

The proposition below gives useful characterizations of $r$-stability.
\begin{prop}\label{004.4}
Let $\varphi : \Sigma \rightarrow \Omega \subseteq M$ be a positive definite capillary $H_{r+1}$-hypersurface supported on $\partial\Omega$. The following statements are equivalent:
\begin{enumerate}[(i)]
\item $\Sigma$ is strongly $r$-stable.
\item The principal eigenvalue $\lambda_p(T_r)$ of \eqref{0007} is non-negative.
\item There exists a positive function $f_0 \in C^\infty(\Sigma)$ such that $\frac{\partial f_0}{\partial\nu}+\alpha_\theta f_0=0$ on $\partial\Sigma$ and $T_rf_0 \geq 0$ in $\Sigma$.
\end{enumerate}
\end{prop}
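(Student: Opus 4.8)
The plan is to prove the cycle of implications $(i)\Rightarrow(ii)\Rightarrow(iii)\Rightarrow(i)$, which is the standard strategy in this circle of ideas (compare the self-adjoint analogue in \cite{marques1997stability, elbert2019note} and the MOTS-type argument in \cite{alaee2021stable}). The bilinear form associated to \eqref{0006} is
\[
Q(f,g)=-\int_\Sigma g\left(L_rf+q_rf\right)d\mu_\Sigma+\int_{\partial\Sigma}\left|P_r\nu\right|\,g\left(\frac{\partial f}{\partial\nu}+\alpha_\theta f\right)d\mu_{\partial\Sigma},
\]
so that strong $r$-stability says $Q(f,f)\ge 0$ for all admissible $f$. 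Recall from Section~\ref{tres} that every smooth $f$ on $\Sigma$ is the support function of an admissible variation, so "admissible $f$" means an arbitrary $f\in C^\infty(\Sigma)$, with no integral constraint in the strong case.

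First I would establish $(i)\Leftrightarrow(ii)$. Since $-L_r$ is elliptic (here $P_r$ is positive definite by hypothesis), $T_r=-L_r-q_r$ is a second-order elliptic operator with a Robin-type boundary condition $\partial f/\partial\nu+\alpha_\theta f=0$, and by \cite[Theorem 3.1]{li2017eigenvalue} it has a principal eigenvalue $\lambda_p$ with a positive eigenfunction $\phi$ spanning a one-dimensional eigenspace, and every other eigenvalue has real part $\ge\lambda_p$. The delicate point is that $T_r$ is \emph{not} self-adjoint, so $Q$ is not symmetric and $\lambda_p$ is not a Rayleigh quotient. I would get around this by the standard substitution $f=\phi u$: writing $L_r(\phi u)$ in terms of $u$, using $L_r\phi = -q_r\phi-\lambda_p\phi$ and the boundary condition on $\phi$, one obtains after an integration by parts a Picone-type identity of the form
\[
Q(\phi u,\phi u)=\lambda_p\int_\Sigma \phi^2 u^2\,d\mu_\Sigma+\int_\Sigma \phi^2\,\langle P_r\nabla u,\nabla u\rangle\,d\mu_\Sigma,
\]
where the gradient term is $\ge 0$ because $P_r$ is positive definite. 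This immediately gives $(ii)\Rightarrow(i)$: if $\lambda_p\ge0$ then $Q(f,f)\ge0$ for all $f$ (every $f$ is of the form $\phi u$ since $\phi>0$). Conversely, testing with $f=\phi$ gives $Q(\phi,\phi)=\lambda_p\int_\Sigma\phi^2\,d\mu_\Sigma$, so $(i)\Rightarrow\lambda_p\ge0$, proving $(ii)$.

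For $(ii)\Rightarrow(iii)$ I would simply take $f_0=\phi$, the principal eigenfunction: it is smooth and strictly positive, satisfies the boundary condition, and $T_r\phi=\lambda_p\phi\ge 0$ since $\phi>0$ and $\lambda_p\ge0$. For $(iii)\Rightarrow(ii)$, the argument is a maximum-principle / comparison one: given a positive $f_0$ with $T_rf_0\ge0$ and the correct boundary condition, I want $\lambda_p\ge0$. Let $\psi>0$ be the principal eigenfunction of the \emph{formal adjoint} $T_r^*$ with the adjoint boundary condition (which also has principal eigenvalue $\lambda_p$, again by \cite{li2017eigenvalue}); then
\[
\lambda_p\int_\Sigma \psi f_0\,d\mu_\Sigma=\int_\Sigma \psi\,T_r^*(\text{-part})\cdots
\]
— more cleanly, pairing $T_rf_0$ against $\psi$ and $T_r^*\psi=\lambda_p\psi$ against $f_0$ and subtracting, all the $L_r$/boundary terms cancel by the divergence-free property of $P_r$ and the matching Robin conditions, leaving $\lambda_p\int_\Sigma\psi f_0 = \int_\Sigma\psi\,T_rf_0\ge0$; since $\psi f_0>0$ this forces $\lambda_p\ge0$. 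Alternatively, and perhaps more in keeping with the elementary flavour of the paper, one can argue directly: if $\lambda_p<0$ with eigenfunction $\phi>0$, then $u=f_0/\phi$ attains a positive interior minimum or a boundary minimum, and plugging into the Picone identity above (or applying the Hopf lemma with the Robin condition at the minimum point on $\partial\Sigma$) yields a contradiction with $T_rf_0\ge0$.

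The main obstacle is the non-self-adjointness of $T_r$: the Picone/substitution identity $f=\phi u$ must be carried out carefully so that the boundary integral in $Q$ combines correctly with the boundary term produced by integrating $\langle P_r\nabla(\phi u),\nabla u\rangle$ by parts — this is exactly where the hypothesis that $\phi$ satisfies the \emph{same} Robin condition $\partial\phi/\partial\nu+\alpha_\theta\phi=0$ is used, together with $\dive P_r=0$ on space forms (or, in the general ambient case, the modified divergence term must be tracked). Once that identity is clean, all four implications are short, but verifying it is the computational heart of the proof.
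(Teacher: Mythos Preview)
Your proposal is correct and largely parallels the paper's proof. The implications $(i)\Rightarrow(ii)$ (plug the positive principal eigenfunction $\phi$ into the quadratic form) and $(ii)\Rightarrow(iii)$ (take $f_0=\phi$) are exactly what the paper does. The one structural difference is how the cycle is closed: you run the Picone substitution $f=\phi u$ with the \emph{principal eigenfunction} to obtain $(ii)\Rightarrow(i)$, and then need a separate step---adjoint pairing or a Hopf/maximum-principle argument---for $(iii)\Rightarrow(ii)$. The paper instead runs the \emph{same} Picone substitution $f=\tilde f f_0$ with the arbitrary positive $f_0$ supplied by (iii). Since that computation uses only the Robin condition on $f_0$ and the \emph{inequality} $T_rf_0\ge 0$ (never that $f_0$ is an eigenfunction), the equality in your identity becomes an inequality at one step and one lands directly on
\[
\left.\frac{\partial}{\partial t}\mathcal{F}_{r,\theta}[\Sigma_t]\right|_{t=0}\ \ge\ \int_\Sigma f_0^{\,2}\,\langle P_r\nabla\tilde f,\nabla\tilde f\rangle\,d\mu_\Sigma\ \ge\ 0,
\]
which is $(iii)\Rightarrow(i)$ without any detour through $(ii)$. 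In other words, the Picone computation you already wrote down is the whole proof; the adjoint-eigenfunction/Hopf step is superfluous. Regarding your final remark about tracking $\dive P_r$ in the general ambient case: the paper's expansion of $\dive(\tilde f f_0^{\,2}P_r\nabla\tilde f)$ in the $(iii)\Rightarrow(i)$ computation likewise writes it as $\tilde f f_0^{\,2}L_r\tilde f+\langle P_r\nabla\tilde f,\nabla(\tilde f f_0^{\,2})\rangle$ without an explicit $\langle\dive P_r,\cdot\rangle$ term, so this concern is not addressed there any more carefully than in your sketch.
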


\begin{proof}
Assume (i) true and let $f_0 \in C^\infty(\Sigma)$ be a (strictly) positive eigenfunction of \eqref{0007} associated to $\lambda_p$. Then $$\lambda_p(T_r)\int_\Sigma f_0^2\,d\mu_\Sigma=\int_\Sigma f_0T_rf_0\,d\mu_\Sigma+\int_{\partial\Sigma} \left\vert{P_r\nu}\right\vert f_0\left(\frac{\partial f_0}{\partial\nu}+\alpha_\theta f_0\right)\,d\mu_{\partial\Sigma} \geq 0,$$ 
which proves (ii). 

Now assume (ii) is true and let $f_0>0$ denotes an eigenfunction associated to the principal eigenvalue $\lambda_p$ of \eqref{0007}. Thus $T_rf_0=\lambda_pf_0 \geq 0$ in $\Sigma$ and this proves (iii).

Finally assume (iii) is true and let $f_0 \in C^\infty(\Sigma)$ be a positive function such that $\frac{\partial f_0}{\partial\nu}+\alpha_\theta f_0=0$ on $\partial\Sigma$ and $T_rf_0=\lambda_pf_0 \geq 0$ in $\Sigma$. Now, take $f \in C^\infty(\Sigma)$ and set $\tilde{f}=\frac{f}{f_0} \in C^\infty(\Sigma)$. For $f=\tilde{f}f_0$ we take an admissible normal variation of $\varphi$ with variation vector field $f\eta$. Then by using (\ref{0006}) and Lemma \ref{002.4} we obtain
\begin{eqnarray*}
\left.\frac{\partial}{\partial t}\mathcal{F}_{r,\theta}\left[\Sigma_t\right]\right\vert_{t=0} &=& -\int_\Sigma f\left(L_rf+q_rf\right)\,d\mu_\Sigma+\int_{\partial\Sigma}\left\vert{P_r\nu}\right\vert f\left(\frac{\partial f}{\partial\nu}+\alpha_\theta f\right)\,d\mu_{\partial\Sigma} \\
&=& -\int_\Sigma \tilde{f}f_0\left(L_r(\tilde{f}f_0)+q_r\tilde{f}f_0\right)\,d\mu_\Sigma+\int_{\partial\Sigma}\left\vert{P_r\nu}\right\vert \tilde{f}f_0\left(\frac{\partial (\tilde{f}f_0)}{\partial\nu}+\alpha_\theta\tilde{f}f_0\right)\,d\mu_{\partial\Sigma} \\
&=& -\int_\Sigma \tilde{f}^2f_0\left(L_rf_0+q_rf_0\right)+2\tilde{f}f_0\left<P_r\nabla\tilde{f},\nabla f_0\right>+\tilde{f}f_0^2L_r\tilde{f}\,d\mu_\Sigma+\\
&& +\int_{\partial\Sigma}\left\vert{P_r\nu}\right\vert \tilde{f}f_0\left(\tilde{f}\left(\frac{\partial f_0}{\partial\nu}+\alpha_\theta f_0\right)+f_0\frac{\partial\tilde{f}}{\partial\nu}\right)\,d\mu_{\partial\Sigma} \\
&\geq& -\int_\Sigma \tilde{f}f_0^2L_r\tilde{f}+2\tilde{f}f_0\left<P_r\nabla\tilde{f},\nabla f_0\right>\,d\mu_\Sigma+\int_{\partial\Sigma} \left<\tilde{f}f_0^2\nabla\tilde{f},P_r\nu\right>\,d\mu_{\partial\Sigma} \\
&=& \int_\Sigma \dive\left(\tilde{f}f_0^2P_r\nabla\tilde{f}\right)-\tilde{f}f_0^2L_r\tilde{f}-2\tilde{f}f_0\left<P_r\nabla\tilde{f},\nabla f_0\right>\,d\mu_\Sigma \\
&=& \int_\Sigma \tilde{f}f_0^2L_r\tilde{f}+\left<P_r\nabla\tilde{f},\nabla\left(\tilde{f}f_0^2\right)\right>-\tilde{f}f_0^2L_r\tilde{f}-2\tilde{f}f_0\left<P_r\nabla\tilde{f},\nabla f_0\right>\,d\mu_\Sigma \\
&=& \int_\Sigma f_0^2\left<P_r\nabla\tilde{f},\nabla\tilde{f}\right>\,d\mu_\Sigma \geq 0. \qedhere
\end{eqnarray*}
\end{proof}

\begin{obs}\label{004.5}
A similar construction can be made when considering $P_r$ being negative definite on each point of $\Sigma$. In this case the functional is defined to be $$\mathcal{F}_{r,\theta}[\Sigma_t]=\int_\Sigma S_{r+1}(t)\left<\xi_t,\eta_t\right>\,d\mu_{\Sigma_t}+\int_{\partial\Sigma} \left<\xi_t,(P_r\nu-\vert{P_r\nu}\vert\cos\theta\,\overline\nu)_t\right>\,d\mu_{\partial\Sigma_t}$$ and $$\left.\frac{\partial}{\partial t}\mathcal{F}_{r,\theta}\left[\Sigma_t\right]\right\vert_{t=0}=\int_\Sigma fT_rf\,d\mu_\Sigma+\int_{\partial\Sigma} \left\vert{P_r\nu}\right\vert\,f\left(\frac{\partial f}{\partial\nu}+\alpha_\theta f\right)\,d\mu_{\partial\Sigma},$$ where in this case $T_r=L_r+\tr\left(P_r\left(A^2+\overline{R}_\eta\right)\right)$.
\end{obs}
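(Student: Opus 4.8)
The plan is to run the entire preceding development — the first variation formula \eqref{0005}, Theorem \ref{004.2}, and Proposition \ref{004.4} — in the case where $-P_r$ (rather than $P_r$) is positive definite, and to track the sign changes that this forces on the relevant quantities. The key observation is that the functional \eqref{0005} was built so that its first variation produces a quadratic form governed by $-L_r - q_r$, and for this to be the sign-coherent ``stability operator'' one needs $-L_r$ to be elliptic, i.e. $P_r$ positive definite. When $P_r$ is negative definite, the naturally elliptic operator is $+L_r$, so one should instead work with $L_r + q_r$; multiplying the defining functional by $-1$ is precisely the bookkeeping device that makes the boundary term $\langle \xi_t,(P_r\nu-|P_r\nu|\cos\theta\,\overline\nu)_t\rangle$ come out with the consistent sign $|P_r\nu|$ on $\partial\Sigma$ again.

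First I would re-derive the first variation exactly as in the Appendix proof of Theorem \ref{004.2}, but starting from $\mathcal{F}_{r,\theta}[\Sigma_t]=\int_\Sigma S_{r+1}(t)\langle\xi_t,\eta_t\rangle\,d\mu_{\Sigma_t}+\int_{\partial\Sigma}\langle\xi_t,(P_r\nu-|P_r\nu|\cos\theta\,\overline\nu)_t\rangle\,d\mu_{\partial\Sigma_t}$. Differentiating the interior integral at $t=0$ and using the variation formula \eqref{0004} for $S_{r+1}^\prime(t)$ together with $\int_\Sigma f\,d\mu_\Sigma=0$ (volume-preserving), the term $S_1 S_{r+1}f^2$ contributes; combining it with $\tr(P_r\overline R_\eta)f^2$ and the piece $S_1 S_{r+1} - (r+2)S_{r+2} = \tr(P_rA^2)$ from Lemma \ref{002.1}(iv), the bulk integrand assembles into $f\big(L_rf + \tr(P_r(A^2+\overline R_\eta))f\big) = fT_rf$ with the new sign convention $T_r = L_r + \tr(P_r(A^2+\overline R_\eta))$. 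The boundary contribution is handled by the same computation as in Theorem \ref{004.2} — using that $\partial\Omega$ is totally umbilical, Lemma \ref{002.4} (so $\nu$ is a principal direction of $\varphi$ and hence $P_r\nu = |P_r\nu|\,\nu$ after orienting appropriately; note that with $P_r$ negative definite one has $P_r\nu$ antiparallel to $\nu$, which is exactly why the overall sign flip on the functional is needed to keep $|P_r\nu|$ positive in the boundary term), and the splitting \eqref{0092} relating $\nu,\eta$ to $\overline\nu,\overline\eta$ — yielding the boundary integrand $|P_r\nu|\,f\big(\tfrac{\partial f}{\partial\nu}+\alpha_\theta f\big)$ with the same $\alpha_\theta$ as in \eqref{0007}. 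This gives the stated formula for $\partial_t\mathcal{F}_{r,\theta}[\Sigma_t]|_{t=0}$.

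Finally, the analogue of Proposition \ref{004.4} goes through verbatim: ellipticity of $-L_r$ there is replaced by ellipticity of $-T_r=-L_r-\tr(P_r(A^2+\overline R_\eta))$ — which again holds because $-P_r>0$ — so the eigenvalue problem $T_rf=\lambda f$ on $\Sigma$, $\tfrac{\partial f}{\partial\nu}+\alpha_\theta f=0$ on $\partial\Sigma$ still has a real principal eigenvalue with one-dimensional eigenspace and positive eigenfunction (by \cite[Theorem 3.1]{li2017eigenvalue}), and the substitution $\tilde f = f/f_0$ produces the same final positive term $\int_\Sigma f_0^2\langle P_r\nabla\tilde f,\nabla\tilde f\rangle\,d\mu_\Sigma$ — wait, here the sign matters: with $P_r$ negative definite this term is $\le 0$, so in the negative-definite case one must instead read the whole chain of (in)equalities with reversed inequality signs, consistent with the fact that the functional was also multiplied by $-1$. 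Concretely, $r$-stability in this setting means $\partial_t\mathcal{F}_{r,\theta}[\Sigma_t]|_{t=0}\le 0$ is replaced throughout by the appropriate convention, or equivalently one defines stability via $-\mathcal{F}_{r,\theta}$ and recovers the positive quadratic form. The main obstacle — and the only point requiring care — is precisely this sign bookkeeping: making sure that the $-1$ in front of the interior integral, the orientation choice making $P_r\nu = -|P_r\nu|\nu$, and the direction of the final Rayleigh-type inequality are all mutually consistent, so that the definition of ``$r$-stable'' in the negative-definite case is genuinely the one that captures $\partial_t^2(\text{modified area})\ge 0$. No new analytic input is needed beyond what was already used for the positive-definite case.
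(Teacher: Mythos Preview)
The paper gives no proof of this remark; it is stated as a bare observation that the positive-definite construction adapts. Your plan --- rerun the Appendix~A computation while tracking the sign changes forced by $P_r<0$ --- is exactly what the authors have in mind, and your treatment of the interior term is correct: flipping the sign on $\int_\Sigma S_{r+1}\langle\xi_t,\eta_t\rangle\,d\mu_{\Sigma_t}$ and using \eqref{0004} immediately produces $\int_\Sigma f\bigl(L_rf+\tr(P_r(A^2+\overline R_\eta))f\bigr)\,d\mu_\Sigma$.

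There is, however, a genuine gap in your handling of the boundary term. You write that ``multiplying the defining functional by $-1$'' is the bookkeeping device, but in the remark only the \emph{interior} integral changes sign; the boundary integrand $\langle\xi_t,(P_r\nu-|P_r\nu|\cos\theta\,\overline\nu)_t\rangle$ is kept verbatim. The Appendix~A computation of $I_2$ begins by observing that this integrand vanishes at $t=0$, since in the positive-definite case $P_r\nu=|P_r\nu|\,\nu$ and hence $P_r\nu-|P_r\nu|\cos\theta\,\overline\nu=|P_r\nu|\sin\theta\,\overline\eta\perp\xi$. When $P_r$ is negative definite one has instead $P_r\nu=-|P_r\nu|\,\nu$, so
\[
P_r\nu-|P_r\nu|\cos\theta\,\overline\nu=-|P_r\nu|\bigl(\nu+\cos\theta\,\overline\nu\bigr)=-|P_r\nu|\bigl(2\cos\theta\,\overline\nu+\sin\theta\,\overline\eta\bigr),
\]
and pairing with $\xi=\widetilde\xi-f\csc\theta\,\overline\nu$ (see \eqref{0074}) gives the nonzero value $2|P_r\nu|f\cot\theta$ at $t=0$. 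Consequently the derivation of \eqref{0068}--\eqref{0089} does not carry over as written: the product rule now picks up a contribution from $\tfrac{d}{dt}d\mu_{\partial\Sigma_t}$, and equations \eqref{0069}--\eqref{0070} must be redone with the altered sign. Your sentence ``the boundary contribution is handled by the same computation as in Theorem~\ref{004.2}'' therefore skips the one place where real work is needed. To close the gap you must either carry out this modified boundary calculation explicitly and show the extra terms cancel, or --- what seems more in keeping with the spirit of the remark --- observe that the natural boundary integrand in the negative-definite case is $\langle\xi_t,(-P_r\nu-|P_r\nu|\cos\theta\,\overline\nu)_t\rangle$, which \emph{does} reduce to $|P_r\nu|\langle\xi,\nu-\cos\theta\,\overline\nu\rangle$ and hence vanishes at $t=0$, after which the Appendix~A argument goes through unchanged and yields exactly the boundary term claimed in the remark.
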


\section{Stability of capillary $H_{r+1}$-hypersurfaces in space forms}\label{quatro}

In this section we specialize to stability of $H_{r+1}$-hypersurfaces immersed on space forms and furnish a particular view of the theory for this setting.
In this paper, $\mathbb{M}^{n+1}(c)$ denote the simply connected space form of constant sectional curvature $c$, i.e., $\mathbb{M}^{n+1}(c)$ is equal to $\mathbb{R}^{n+1}$ if $c=0$, $\mathbb{S}^{n+1}(c)$ if $c>0$ and $\mathbb{H}^{n+1}(c)$ if $c=0$.

 As we mentioned before, in \cite[Theorem 4.1]{rosenberg1993hypersurfaces}, H. Rosenberg proved that $P_r$ is divergence free for hypersurfaces on space forms, i.e., $$\dive P_r=\sum_{i=1}^n (\nabla_{e_i}P_r)e_i=0,$$ where $\left\{e_i\right\}$ is an arbitrary local frame in $\Sigma$. Thus, the Stokes theorem and Lemma \ref{002.4} imply that 
\begin{equation}\label{0008}
-\int_\Sigma fL_rf\,d\mu_\Sigma=\int_\Sigma \left<P_r\nabla f,\nabla f\right>\,d\mu_\Sigma-\int_{\partial\Sigma}\left\vert{P_r\nu}\right\vert f\frac{\partial f}{\partial\nu}\,d\mu_{\partial\Sigma}
\end{equation}
for all $f \in C^\infty(\Sigma)$. Together with the identity $$\overline{R}(X,Y)Z=c\left(\left<X,Z\right>Y-\left<Y,Z\right>X\right), \quad X,Y,Z \in \Gamma\left(T\mathbb{M}^{n+1}(c)\right),$$ we have that $\overline{R}_\eta(X)=cX$ and $\tr\left(P_r\overline{R}_\eta\right)=c\tr P_r=c(n-r)S_r$. Hence, when the ambient space is a space form, \eqref{0006} becomes
\begin{multline}\label{0009}
\left.\frac{\partial}{\partial t}\mathcal{F}_{r,\theta}\left[\Sigma_t\right]\right\vert_{t=0}=\int_\Sigma \left<P_r\nabla f,\nabla f\right>-\left(S_1S_{r+1}-(r+2)S_{r+2}\right)f^2-c\left(n-r\right)S_rf^2\,d\mu_\Sigma+\\ +\int_{\partial\Sigma}\left\vert{P_r\nu}\right\vert\alpha_\theta f^2\,d\mu_{\partial\Sigma}
\end{multline}

Given a function $f \in C^\infty(\Sigma)$, the $H^1$-norm of $f$ is defined by $$\left\Vert{f}\right\Vert_{H^1(\Sigma)}^2=\left\Vert{f}\right\Vert_{L^2(\Sigma)}^2+\left\Vert{\nabla f}\right\Vert_{L^2(\Sigma)}^2=\int_\Sigma f^2+\left\vert\nabla f\right\vert^2\,d\mu_\Sigma$$ and the Sobolev space $H^1(\Sigma)$ is defined as the closure of $C^\infty(\Sigma)$ with respect to the norm $\left\Vert\cdot\right\Vert_{H^1(\Sigma)}$. This set endowed with this norm is a Hilbert space, thus one can view \eqref{0009} as a quadratic form associated to a bilinear symmetric form on $H^1(\Sigma)$.

\begin{defn}\label{005.1}
Let $\varphi : \Sigma \rightarrow \Omega \subseteq \mathbb{M}^{n+1}(c)$ be a positive definite $H_{r+1}$-hypersurface. The \textbf{$r$-index form} of $\varphi$ is defined by
\begin{equation}\label{0090}
\mathcal{I}_{r,\theta}(f_1,f_2)=\int_\Sigma\left<P_r\nabla f_1,\nabla f_2\right>-\tr\left(P_r\left(A^2+\overline{R}_\eta\right)\right)f_1f_2\,d\mu_\Sigma+\int_{\partial\Sigma}\vert{P_r\nu}\vert\alpha_\theta f_1f_2\,d\mu_{\partial\Sigma}
\end{equation}
where $f_1,f_2 \in H^1(\Sigma)$, or equivalently,
\begin{multline}\label{0010}
\mathcal{I}_{r,\theta}(f_1,f_2)=\int_\Sigma \left<P_r\nabla f_1,\nabla f_2\right>-\left(S_1S_{r+1}-(r+2)S_{r+2}+c\left(n-r\right)S_r\right)f_1f_2\,d\mu_\Sigma+\\+\int_{\partial\Sigma}\left\vert{P_r\nu}\right\vert\alpha_\theta f_1f_2\,d\mu_{\partial\Sigma},
\end{multline}
\end{defn}

From the above definition, one can notice that $\Sigma$ is strongly $r$-stable if and only if $\mathcal{I}_{r,\theta}(f,f) \geq 0$ for all $f \in H^1(\Sigma)$ and that $\Sigma$ is $r$-stable if $\mathcal{I}_{r,\theta}(f,f) \geq 0$ for all $f \in \mathcal{F}=\left\{f \in H^1(\Sigma)\,|\,\int_\Sigma f\,d\mu_\Sigma=0\right\}$.

As in the case $r=0$, when considering $\varphi$ a capillary $(r+1)$-minimal hypersurface, i.e. $H_{r+1}=0$, we say that $\varphi$ is stable if $\mathcal{I}_{r,\theta}(f,f) \geq 0$ for all $f \in C_0^\infty(\Sigma)$. This means the hypothesis on the variation being volume-preserving is dropped.

\

It is known that if $\Sigma$ is a totally umbilical hypersurface of $\mathbb{M}^{n+1}(c)$ then $\Sigma$ is contained into a totally geodesic hypersurface, a geodesic sphere and, for $c<0$, $\Sigma$ can also be part of a horosphere or a equidistant hypersurface \cite[p. 75, 77]{spivak1970comprehensive}. Horospheres have all principal curvatures equal to $1$ and equidistant hypersurfaces have all principal curvatures positive and smaller than $1$. The next result, whose proof is given in Appendix B, shows that such totally umbilical hypersurfaces are examples of $r$-stable capillary $H_{r+1}$-hypersurfaces of $\mathbb{M}^{n+1}(c)$.

\begin{prop}\label{004.6}
Suppose that $\varphi : \Sigma \rightarrow \Omega \subseteq \mathbb{M}^{n+1}(c)$ is a positive definite compact totally umbilical capillary $H_{r+1}$-hypersurface supported on a connected totally umbilical hypersurface $\partial\Omega$ of $\mathbb{M}^{n+1}(c)$. Then $\varphi$ is $r$-stable.
\end{prop}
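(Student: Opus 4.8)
The plan is to use the characterization of strong $r$-stability from Proposition \ref{004.4}(iii): it suffices to exhibit a positive function $f_0 \in C^\infty(\Sigma)$ with $\frac{\partial f_0}{\partial \nu} + \alpha_\theta f_0 = 0$ on $\partial\Sigma$ and $T_r f_0 \geq 0$ in $\Sigma$ (in fact we will aim for $T_r f_0 = 0$ or $T_r f_0$ of a definite sign). Since $\Sigma$ is totally umbilical, all principal curvatures equal a common value $\kappa$, so $A = \kappa\, \id$, $P_r = S_r(A_i)\, \id$ is a constant multiple of the identity (with $S_r(A_i) = \binom{n-1}{r}\kappa^r$), and $L_r = \binom{n-1}{r}\kappa^r \Delta_\Sigma$. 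The natural candidate for $f_0$ is a constant function, or — guided by the $r=0$ capillary theory of Ros--Vergasta and Souam, and by the support functions used there — a function of the form $f_0 = \langle a, \eta\rangle$ or $\langle a, N\rangle + b$ built from an ambient Killing/conformal field, restricted to $\Sigma$. Because $\Sigma$ sits inside a totally umbilical hypersurface $\widehat\Sigma$ of $\mathbb{M}^{n+1}(c)$ (a totally geodesic hypersurface, geodesic sphere, horosphere, or equidistant hypersurface), one can produce such an $f_0$ as (a component of) the normal part of the position-type vector field adapted to $\widehat\Sigma$; this is the standard device showing umbilical hypersurfaces are stable.

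The key steps, in order, are: (1) Reduce via Proposition \ref{004.4} to finding the test function $f_0$; record that on a totally umbilical $\Sigma$ one has $A = \kappa\,\id$, $P_r = c_{n,r}\kappa^r\,\id$ with $c_{n,r} = \binom{n-1}{r} > 0$ (so positive-definiteness forces $\kappa > 0$ unless $r$ is even), $q_r = \tr(P_r(A^2 + \overline R_\eta)) = c_{n,r}\kappa^r(n\kappa^2 + nc) = \binom{n}{r}\binom{n}{1}\,$-type constant, hence $T_r = -c_{n,r}\kappa^r\Delta_\Sigma - (\text{const})$. (2) Identify $f_0$: take the restriction to $\Sigma$ of an appropriate ambient function $u$ whose Hessian on $\widehat\Sigma$ is a multiple of the metric (e.g. $u$ linear in $\mathbb{R}^{n+1}$, or $u = \langle \cdot, a\rangle$ in the hyperboloid/sphere model), so that $\Delta_\Sigma f_0 = -\mu f_0$ for a suitable constant $\mu$ determined by $c$, $\kappa$, and the geometry of $\widehat\Sigma$ (this is exactly the "first nonconstant eigenfunction of the umbilical cap" phenomenon). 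Then check $T_r f_0 = (c_{n,r}\kappa^r \mu - \text{const}) f_0$ has the required sign, or simply vanishes, which is what happens for the extremal capillary caps. (3) Verify the Robin boundary condition $\frac{\partial f_0}{\partial\nu} + \alpha_\theta f_0 = 0$ on $\partial\Sigma$: here use Lemma \ref{002.4} (that $\nu$ is a principal direction of $\varphi$, since $\partial\Omega$ is totally umbilical) together with the relations \eqref{0002}--\eqref{0092} between $\{\nu,\eta\}$ and $\{\overline\nu,\overline\eta\}$ and the constancy of $\theta$; the constant contact angle and umbilicity of both $\Sigma$ and $\partial\Omega$ force $\left(\emph{II}_\Sigma\right)_\eta(\nu,\nu)$ and $\left(\emph{II}_{\partial\Omega}\right)_{\overline\eta}(\overline\nu,\overline\nu)$ to be constants, so $\alpha_\theta$ is constant, and the normal derivative of the linear-type function $f_0$ along $\partial\Sigma$ is computed directly from \eqref{0092}.

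The main obstacle I expect is step (2)--(3) performed uniformly across all the cases for $\widehat\Sigma$ and all signs of $c$: one must pick the right ambient function and track constants so that the interior inequality $T_r f_0 \geq 0$ and the boundary identity hold simultaneously, and the bookkeeping differs between the totally geodesic, spherical, horospherical, and equidistant cases (and between $c>0$, $c=0$, $c<0$). A secondary subtlety is the sign/definiteness of $P_r$ when $r$ is odd: one must either assume the orientation makes $\kappa > 0$, or else $\kappa < 0$ with $r$ odd is excluded by positive-definiteness, so the cases where $\Sigma$ is part of a sphere or equidistant hypersurface with the "wrong" orientation simply do not arise. Once the correct $f_0$ is written down, the verification is a routine computation using $\dive P_r = 0$ (Rosenberg), \eqref{0008}, and the constancy of all the relevant curvature quantities on umbilical hypersurfaces; I would organize it so that the interior term reduces to an eigenvalue identity and the boundary term collapses using the constancy of $\alpha_\theta$ and the explicit form of $f_0$.
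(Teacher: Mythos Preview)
Your approach has a genuine gap: Proposition \ref{004.4}(iii) characterizes \emph{strong} $r$-stability, but totally umbilical capillary caps are \emph{not} strongly $r$-stable, so no positive $f_0$ with $T_rf_0\ge 0$ and the Robin condition can exist. Concretely, on a totally umbilical $\Sigma$ with principal curvature $\kappa$ one has $P_r=\tfrac{n-r}{n}\binom{n}{r}\kappa^r I$ and $q_r=\tfrac{n-r}{n}\binom{n}{r}\kappa^r\big(|A|^2+nc\big)$, so the Robin eigenvalue problem \eqref{0007} is a constant multiple of the $r=0$ problem $-\Delta f-(|A|^2+nc)f=\lambda f$ with $\partial_\nu f+\alpha_\theta f=0$. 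For this problem the paper computes (Lemma \ref{00B.1}) that the second eigenvalue is $\lambda_2=0$ with multiplicity $n$, whence $\lambda_1<0$; equivalently, the constant function already gives a negative value of the index form. By Proposition \ref{004.4} (ii)$\Leftrightarrow$(iii), the positive supersolution you are looking for does not exist. The ``linear-type'' candidates you propose, such as $\langle a,\eta\rangle$, are precisely the eigenfunctions attached to $\lambda_2=0$: they satisfy $T_rf_0=0$ and the Robin condition, but they change sign (and integrate to zero), so they are useless for Proposition \ref{004.4}(iii).

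What the paper actually does is different and necessarily more delicate. First it observes, as you did, that $\mathcal{I}_{r,\theta}=\tfrac{n-r}{n}\binom{n}{r}\kappa^r\,\mathcal{I}_{0,\theta}$, so $r$-stability is equivalent to $0$-stability. Then, using the warped-product model for the intrinsic geometry of $\Sigma$, it shows $\lambda_2=0$, so the Morse index of $\mathcal{I}_{0,\theta}$ on $H^1(\Sigma)$ equals $1$. To pass from index $1$ on $H^1(\Sigma)$ to index $0$ on $\mathcal{F}=\{f:\int_\Sigma f=0\}$, the paper exhibits an explicit nonpositive radial function $f$ solving $T_0^S f=-1$ with the Robin boundary condition (formula \eqref{0091}) and invokes an index-reduction criterion (Tran \cite{tran2020morse}) to conclude that the weak Morse index is $1-1=0$. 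In short, the stability here is genuinely \emph{weak} (volume-preserving) stability, and the proof must account for the single unstable direction being transverse to $\mathcal{F}$; your plan via Proposition \ref{004.4}(iii) cannot detect this and therefore cannot succeed.
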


The $0$-stability of a totally umbilical hypersurface supported on a horosphere is discussed in \cite[Proposition 2.5]{guo2022stable}.

A \textbf{geodesic cap} $\Sigma$ of $\mathbb{M}^{n+1}(c)$ is a geodesic ball of a geodesic sphere of $\mathbb{M}^{n+1}(c)$. The proposition above shows such capillary hypersurfaces supported on totally umbilical hypersurface are $r$-stable.

\begin{cor}
Capillary geodesic caps supported on a totally umbilical hypersurface of $\mathbb{M}^{n+1}(c)$ are $r$-stable.
\end{cor}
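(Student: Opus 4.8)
The plan is to deduce the corollary directly from Proposition \ref{004.6}, since a geodesic cap is by definition a geodesic ball inside a geodesic sphere of $\mathbb{M}^{n+1}(c)$, hence a subdomain of a totally umbilical hypersurface. The only thing to check is that all the hypotheses of Proposition \ref{004.6} are actually met by such a $\Sigma$: namely that it is compact, totally umbilical, capillary, an $H_{r+1}$-hypersurface, and \emph{positive definite} in the sense of Definition \ref{004.1}.

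First I would observe that a geodesic sphere of $\mathbb{M}^{n+1}(c)$ is totally umbilical with a nonzero constant principal curvature $\lambda=\lambda(c,\rho)$ depending only on the ambient curvature and the radius $\rho$ (explicitly $\lambda=\sqrt{c}\cot(\sqrt{c}\,\rho)$ for $c>0$, $\lambda=1/\rho$ for $c=0$, $\lambda=\sqrt{-c}\coth(\sqrt{-c}\,\rho)$ for $c<0$); a geodesic cap inherits this, so every principal curvature of $\Sigma$ equals the single value $\lambda$. Consequently $S_{r+1}=\binom{n}{r+1}\lambda^{r+1}$ is constant, so $\Sigma$ is indeed an $H_{r+1}$-hypersurface; a geodesic ball of the sphere is compact with smooth boundary; and, choosing the orientation so that $\lambda>0$, Lemma \ref{002.1}(i) gives $P_r e_i=S_r(A_i)e_i=\binom{n-1}{r}\lambda^{r}\,e_i>0$, so $P_r$ is positive definite and $\Sigma$ is positive definite in the required sense. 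The capillarity of the cap supported on the ambient totally umbilical hypersurface $\partial\Omega$ is part of the hypothesis of the corollary, and $\partial\Omega$ being totally umbilical is exactly what Proposition \ref{004.6} asks for.

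With all hypotheses verified, I would simply invoke Proposition \ref{004.6} to conclude that $\varphi$ is $r$-stable, which is the assertion of the corollary. There is essentially no obstacle here: the statement is a specialization of the proposition, and the content of the argument is the (elementary) remark that a piece of a geodesic sphere is a compact totally umbilical $H_{r+1}$-hypersurface with positive definite $P_r$ after a suitable choice of orientation. The one point to state carefully is the orientation choice making $P_r$ positive definite, so that Definition \ref{004.1} applies; once that is recorded, the proof is a one-line citation of Proposition \ref{004.6}.

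\begin{proof}
A geodesic cap $\Sigma$ is, by definition, a geodesic ball of a geodesic sphere $S$ of $\mathbb{M}^{n+1}(c)$; in particular $\Sigma$ is compact with smooth boundary and is totally umbilical, since $S$ is. Every principal curvature of $\Sigma$ equals the constant value $\lambda=\lambda(c,\rho)\neq 0$ of the totally umbilical sphere of radius $\rho$. Hence $S_{r+1}=\binom{n}{r+1}\lambda^{r+1}$ is constant, so $\Sigma$ is an $H_{r+1}$-hypersurface. Choosing the orientation $\eta$ of $\varphi$ so that $\lambda>0$, Lemma \ref{002.1}(i) yields $P_r e_i=S_r(A_i)e_i=\binom{n-1}{r}\lambda^{r}\,e_i$ for each principal direction $e_i$, and $\binom{n-1}{r}\lambda^{r}>0$; thus $P_r$ is positive definite at every point and $\Sigma$ is a positive definite $H_{r+1}$-hypersurface in the sense of Definition \ref{004.1}. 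Since $\varphi$ is capillary and supported on the totally umbilical hypersurface $\partial\Omega$ of $\mathbb{M}^{n+1}(c)$, Proposition \ref{004.6} applies and gives that $\varphi$ is $r$-stable.
\end{proof}
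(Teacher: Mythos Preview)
Your proposal is correct and follows exactly the paper's approach: the corollary is stated immediately after Proposition \ref{004.6} as a direct specialization, and your argument simply spells out why a geodesic cap satisfies the hypotheses (compact, totally umbilical, constant $H_{r+1}$, and positive definite after choosing the orientation with $\lambda>0$). The paper itself gives no separate proof beyond the remark that the proposition applies, so your verification of the hypotheses is, if anything, more detailed than what the paper records.
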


\bigskip

A normal vector field $\xi=f\eta$, with $f \in \mathcal{F}$, is a \textbf{Jacobi field} if $f \in \ker\mathcal{I}_{r,\theta}\vert_{\mathcal{F} \times \mathcal{F}}$, i.e., $\mathcal{I}_{r,\theta}(f,g)=0$ for every $g \in \mathcal{F}$. The next lemma gives a characterization of Jacobi fields on $\Sigma$.

\begin{lema}\label{005.3}
Let $\varphi : \Sigma \rightarrow \Omega \subseteq \mathbb{M}^{n+1}(c)$ be a positive definite $H_{r+1}$-hypersurface with free boundary in $\partial\Omega$ and $f \in \mathcal{F}$. Then

\begin{enumerate}[(i)]
\item $\xi=f\eta$ is a Jacobi field on $\Sigma$ if and only if $f \in C^\infty(\Sigma)$ and
\begin{equation}\label{0011}\begin{dcases*}
T_rf=-L_rf-\tr\left(A^2\left(P_r+\overline{R}_\eta\right)\right)f=\text{constant} & in $\Sigma$ \\
\frac{\partial f}{\partial\nu}+\alpha_\theta f=0 & on $\partial\Sigma$
\end{dcases*}.\end{equation}

\item If $\varphi$ is $r$-stable and $\mathcal{I}_{r,\theta}(f,f)=0$ then $f$ is a Jacobi field on $\Sigma$.
\end{enumerate}
\end{lema}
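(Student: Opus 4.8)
The plan for part~(i) is to rewrite the bilinear form as a single Green-type pairing. Integrating by parts in \eqref{0090} --- using that $P_r$ is divergence free on hypersurfaces of $\mathbb{M}^{n+1}(c)$, so that $L_rf=\dive(P_r\nabla f)$, and Lemma~\ref{002.4} together with the positive definiteness of $P_r$ to identify $\langle P_r\nabla f,\nu\rangle=|P_r\nu|\,\frac{\partial f}{\partial\nu}$ on $\partial\Sigma$ --- one obtains, for all $f,g\in C^\infty(\Sigma)$,
\[
\mathcal{I}_{r,\theta}(f,g)=\int_\Sigma g\,T_rf\,d\mu_\Sigma+\int_{\partial\Sigma}|P_r\nu|\,g\left(\frac{\partial f}{\partial\nu}+\alpha_\theta f\right)d\mu_{\partial\Sigma},
\]
where $T_rf=-L_rf-q_rf$; this is the two-function version of \eqref{0008} inserted into \eqref{0090}. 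From here the ``if'' direction of (i) is immediate: if $f\in C^\infty(\Sigma)$ solves \eqref{0011}, the boundary term vanishes and $\int_\Sigma g\,T_rf\,d\mu_\Sigma=(\mathrm{const})\int_\Sigma g\,d\mu_\Sigma=0$ for every $g\in\mathcal{F}$, hence $\mathcal{I}_{r,\theta}(f,g)=0$ and $f\eta$ is a Jacobi field.

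For the ``only if'' direction, assume $f\in\mathcal{F}$ with $\mathcal{I}_{r,\theta}(f,g)=0$ for all $g\in\mathcal{F}$. First I would eliminate the mean-zero constraint via a Lagrange multiplier: for arbitrary $g\in H^1(\Sigma)$ write $g=(g-\bar g)+\bar g$ with $\bar g=\frac{1}{\vol(\Sigma)}\int_\Sigma g\,d\mu_\Sigma$; since $g-\bar g\in\mathcal{F}$, this yields $\mathcal{I}_{r,\theta}(f,g)=\bar g\,\mathcal{I}_{r,\theta}(f,1)=\mu\int_\Sigma g\,d\mu_\Sigma$ with $\mu:=\mathcal{I}_{r,\theta}(f,1)/\vol(\Sigma)\in\mathbb{R}$ fixed. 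Thus $f$ is an $H^1$ weak solution of the Robin-type boundary value problem $-L_rf-q_rf=\mu$ in $\Sigma$ with $\frac{\partial f}{\partial\nu}+\alpha_\theta f=0$ on $\partial\Sigma$ (the boundary condition being the natural one for $\mathcal{I}_{r,\theta}$, again via Lemma~\ref{002.4} and $|P_r\nu|>0$). Because $P_r$ is positive definite and $\Sigma$ is compact, $-L_r$ is uniformly elliptic with smooth coefficients and $\alpha_\theta$ is smooth, so elliptic regularity up to the boundary forces $f\in C^\infty(\Sigma)$ and $f$ to be a classical solution, which is precisely \eqref{0011}.

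For part~(ii), $r$-stability says that $\mathcal{I}_{r,\theta}$ is a symmetric positive semidefinite bilinear form on $\mathcal{F}\times\mathcal{F}$. If $f\in\mathcal{F}$ satisfies $\mathcal{I}_{r,\theta}(f,f)=0$, then for every $g\in\mathcal{F}$ and $t\in\mathbb{R}$,
\[
0\le\mathcal{I}_{r,\theta}(f+tg,f+tg)=2t\,\mathcal{I}_{r,\theta}(f,g)+t^2\mathcal{I}_{r,\theta}(g,g),
\]
and letting $t\to 0^{+}$ and $t\to 0^{-}$ forces $\mathcal{I}_{r,\theta}(f,g)=0$; hence $f\in\ker\mathcal{I}_{r,\theta}|_{\mathcal{F}\times\mathcal{F}}$, i.e.\ $f\eta$ is a Jacobi field. (Equivalently, this is the Cauchy--Schwarz inequality for positive semidefinite forms.)

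The only step that is not pure bookkeeping is the elliptic regularity claim in the ``only if'' part of (i): upgrading an $H^1$ weak solution of the $L_r$-equation with a Robin boundary condition to a smooth classical one. This rests on the uniform ellipticity of $-L_r$, guaranteed by the positive definiteness of $P_r$ on the compact $\Sigma$, together with the smoothness of the coefficients $q_r$ and $\alpha_\theta$, which are built from the smooth immersion $\varphi$ and the smooth totally umbilical support $\partial\Omega$. Everything else follows from the divergence-free identity \eqref{0008} and Lemma~\ref{002.4}.
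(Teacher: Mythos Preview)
Your proof is correct and follows essentially the same approach as the paper: the Green-type identity obtained from \eqref{0008}, a Lagrange-multiplier argument to pass from test functions in $\mathcal{F}$ to all of $H^1(\Sigma)$ (the paper does this by fixing a $g_1$ with nonzero mean and forming $g_2=g-\frac{\int g}{\int g_1}g_1$, which is your $g-\bar g$ in disguise), elliptic regularity from the positive definiteness of $P_r$, and the identical quadratic expansion for part~(ii). The only cosmetic difference is that the paper separates interior regularity from the boundary condition, testing first with $C_0^\infty(\Sigma)$ and then with a boundary-supported function, whereas you treat both at once as the weak Robin problem; your packaging is slightly cleaner but the content is the same.
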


\begin{proof}
\begin{enumerate}[(i)]
\item First suppose that $f \in C^\infty(\Sigma)$ satisfies \eqref{0011} and let $g \in \mathcal{F}$. If $\left(g_m\right)_{m \in \mathbb{N}}$ is a sequence of smooth functions on $\Sigma$ such that $\int_\Sigma g_m\,d\mu_\Sigma=0$ for all $m \in \mathbb{N}$ and $g_m \stackrel{m \rightarrow \infty}{\longrightarrow} g$ in the $H^1(\Sigma)$-sense, it follows from \eqref{0010} and \eqref{0011} that
\begin{eqnarray*}
\mathcal{I}_{r,\theta}(f,g_m) &=& -\int_\Sigma \left(L_rf+\tr_\Sigma\left(P_r\left(A^2+\overline{R}_\eta\right)\right)f\right)g_m\,d\mu_\Sigma+\int_{\partial\Sigma}\left\vert{P_r\nu}\right\vert\left(\frac{\partial f}{\partial\nu}+\alpha_\theta f\right)g_m\,d\mu_{\partial\Sigma} \\
&=& 0 \\
\mathcal{I}_{r,\theta}(f,g) &=& \lim_{m \rightarrow \infty} \mathcal{I}_{r,\theta}(f,g_m)=0,
\end{eqnarray*}
proving that $f\eta$ is a Jacobi field on $\Sigma$.

To prove the converse, we will first claim that
\begin{equation}\label{0012}
\mathcal{I}_{r,\theta}(f,g)=b\int_\Sigma g\,d\mu_\Sigma, \quad \forall g \in C_0^\infty(\Sigma),
\end{equation}
where $b$ is a constant to be specified. In fact, let $g_1 \in C_0^\infty(\Sigma)$ be a function such that $\int_\Sigma g_1\,d\mu_\Sigma \neq 0$ and define $b=\frac{\mathcal{I}_{r,\theta}(f,g_1)}{\int_\Sigma g_1\,d\mu_\Sigma}$. Given $g \in C^\infty(\Sigma)$, let $g_2=-\frac{\int_\Sigma g\,d\mu_\Sigma}{\int_\Sigma g_1\,d\mu_\Sigma}g_1+g \in \mathcal{F}$. Then,
\begin{equation*}
0=\mathcal{I}_{r,\theta}(f,g_2)=-\dfrac{\int_\Sigma g\,d\mu_\Sigma}{\int_\Sigma g_1\,d\mu_\Sigma}\mathcal{I}_{r,\theta}(f,g_1)+\mathcal{I}_{r,\theta}(f,g)=-b\int_\Sigma g\,d\mu_\Sigma+\mathcal{I}_{r,\theta}(f,g),
\end{equation*}
proving \eqref{0012}. Thus, $f$ is a weak solution to the first equation in \eqref{0011} and since $P_r$ is positive definite at each point of $\Sigma$, the regularity theory of second-order elliptic self-adjoint linear operator implies that $f \in C^\infty(\Sigma)$, proving that $f$ satisfies the equation in the strong sense.

In order to prove that $\frac{\partial f}{\partial\nu}+\alpha_\theta f=0$ on $\partial\Sigma$, let
\begin{equation*}g=\begin{dcases*}
\frac{\partial f}{\partial\nu}+\alpha_\theta f,& in $\partial\Sigma$ \\
0,& else
\end{dcases*}.\end{equation*}
Since $f\eta$ is a Jacobi field, we have
\begin{equation*}
0=\mathcal{I}_{r,\theta}(f,g)=\int_{\partial\Sigma}\left\vert{P_r\nu}\right\vert\left(\frac{\partial f}{\partial\nu}+\alpha_\theta f\right)^2\,d\mu_{\partial\Sigma},
\end{equation*}
proving (i).

\item Let $g \in \mathcal{F}$. Since $\varphi$ is $r$-stable, we have $\mathcal{I}_{r,\theta}\left(f+\varepsilon g,f+\varepsilon g\right) \geq 0$ for all $\varepsilon>0$. Thus,
\begin{eqnarray}
0 &\leq& \mathcal{I}_{r,\theta}(f+\varepsilon g,f+\varepsilon g)=\mathcal{I}_{r,\theta}(f,f)+2\varepsilon\,\mathcal{I}_{r,\theta}(f,g)+\varepsilon^2\mathcal{I}_{r,\theta}(g,g) \nonumber \\
&=& 2\varepsilon\,\mathcal{I}_{r,\theta}(f,g)+\varepsilon^2\mathcal{I}_{r,\theta}(g,g). \label{0013}
\end{eqnarray}
Dividing \eqref{0013} by $2\varepsilon$ and letting $\varepsilon \rightarrow 0^+$, we obtain $\mathcal{I}_{r,\theta}(f,g) \geq 0$. The reverse inequality is obtained replacing $\varepsilon$ by $-\varepsilon$. \qedhere
\end{enumerate}
\end{proof}

\section{Stability results for free boundary $H_{r+1}$-hypersurfaces in $\mathbb{M}^{n+1}(c)$}\label{cinco}
In this section we will assume that $\theta=\frac{\pi}{2}$ and we will use following models for $\mathbb{M}^{n+1}(c)$:
\begin{eqnarray*}
\mathbb{R}^{n+1} &=& \left\{x=(x_1,...,x_{n+2}) \in \mathbb{R}^{n+2}\,\left|\right.\,x_{n+2}=0\right\} \\
\mathbb{S}^{n+1} &=& \left\{x=(x_1,...,x_{n+2}) \in \mathbb{R}^{n+2}\,\left|\right.\,x_1^2+...+x_{n+2}^2=\frac{1}{c^2}\right\} \\
\mathbb{H}^{n+1}(c) &=& \left\{x=(x_1,...,x_{n+2}) \in \mathbb{R}_1^{n+2}\,\left|\right.\,x_1^2+...+x_{n+1}^2-x_{n+2}^2=-\frac{1}{c^2}, x_{n+2}>0\right\}
\end{eqnarray*}
endowed with the pullback of the Euclidean metric for $c \geq 0$ or the Minkowski metric for $c<0$. Define
\begin{equation}\label{0072}
\sn_c(\rho)=\begin{dcases*}
\frac{\sin\left(\rho\sqrt{c}\right)}{\sqrt{c}},& if $c>0$ \\
\rho,& if $c=0$ \\
\frac{\sinh\left(\rho\sqrt{-c}\right)}{\sqrt{-c}},& if $c<0$
\end{dcases*}
\end{equation}
and $\cn_c(\rho)=\sn_c^\prime(\rho)$. For the results of this section we will assume that $c \in \{-1,0,1\}$. If $\{\mathbf{e}_1,...,\mathbf{e}_{n+2}\}$ denote the vectors of the canonical basis of $\mathbb{R}^{n+2}$ then we have some relations involving $\Sigma$, $\nu$ and $\varphi$. The proof can be found in \cite[Lemma 1.1]{souam1997stability}.

\begin{lema}\label{007.2}
Let $B_R$ be a geodesic ball of $\mathbb{M}^{n+1}(c)$ and let $\varphi : \Sigma^n \rightarrow B_R \subseteq \mathbb{M}^{n+1}(c)$ be a $H_{r+1}$-hypersurface. If $\{\mathbf{e}_1,...,\mathbf{e}_{n+1},\mathbf{e}_{n+2}\}$ denote the vectors of the canonical basis in $\mathbb{R}^{n+2}$, we have
\begin{enumerate}[(i)]
\item On $\partial\Sigma$ we have $\sn_c(R)\,\nu-\cn_c(R)\,\varphi=-\frac{1}{\sqrt{c}}\,\mathbf{e}_{n+2}$ for $c \neq 0$ and $\nu=\frac{1}{R}\varphi$ for $c=0$.
\item The second fundamental form of $\iota_{\partial\Sigma} : \partial\Sigma \hookrightarrow \Sigma$ with respect to $-\nu$ is given by $\frac{\cn_c(R)}{\sn_c(R)}\left<\cdot,\cdot\right>$, where $\left<\cdot,\cdot\right>$ is the metric of $\partial\Sigma$ induced by $\varphi\vert_{\partial\Sigma}$. In particular, if $n=2$, the geodesic curvature of $\partial\Sigma$ in $\Sigma$ of any point is given by $\frac{\cn_c(R)}{\sn_c(R)}$.
\end{enumerate}
\end{lema}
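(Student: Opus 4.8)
The plan is to read everything off an explicit description of the outward unit normal of the geodesic sphere $S_R:=\partial B_R$ in the three models above, and then to deduce (ii) from (i) by a one–line differentiation along $\partial\Sigma$.

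Since $\varphi$ has free boundary, \eqref{0002} with $\theta=\frac{\pi}{2}$ gives $\nu=\overline\eta$ along $\partial\Sigma$, where $\overline\eta$ is the outward unit normal of $S_R$ viewed as a hypersurface of $\mathbb{M}^{n+1}(c)$; so (i) is really a statement about $S_R$ itself. I would place (without loss of generality) the center of $B_R$ at the origin when $c=0$, and at the point $\frac{1}{\sqrt{c}}\mathbf{e}_{n+2}$ (respectively $\frac{1}{\sqrt{-c}}\mathbf{e}_{n+2}$) when $c>0$ (respectively $c<0$), which is legitimate after an ambient isometry. Parametrizing the unit-speed geodesics issuing from the center as $x(\rho)=\cn_c(\rho)\,o+\sn_c(\rho)\,v$, where $o$ is the (Euclidean or Minkowski) position vector of the center and $v$ ranges over the unit vectors tangent to $\mathbb{M}^{n+1}(c)$ at the center and orthogonal to $\mathbf{e}_{n+2}$, the outward radial field along $S_\rho$ is $\overline\eta=\partial_\rho x$. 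Solving for $v$ from the parametrization and substituting into $\overline\eta$ collapses the expression to the relation $\sn_c(\rho)\,\overline\eta-\cn_c(\rho)\,x=-\frac{1}{\sqrt{c}}\mathbf{e}_{n+2}$ on $S_\rho$ when $c\neq0$, and $\overline\eta=\frac{1}{\rho}\,x$ when $c=0$. Restricting to $\rho=R$ along $\partial\Sigma$ and using $\nu=\overline\eta$, $\varphi=x$, gives (i).

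For (ii) I would differentiate the identity in (i). Fix $X\in\Gamma(T\partial\Sigma)$. Since $R$ is constant along $\partial\Sigma$, $\mathbf{e}_{n+2}$ is a parallel vector for the flat ambient connection of $\mathbb{R}^{n+2}$ (or $\mathbb{R}^{n+2}_1$), and $\langle X,\nu\rangle=0$ on $\partial\Sigma$ — so the normal correction coming from the umbilic inclusion $\mathbb{M}^{n+1}(c)\hookrightarrow\mathbb{R}^{n+2}$ drops out — applying the ambient flat connection to (i) and using $\overline\nabla_X\varphi=\varphi_\ast X$ yields $\overline\nabla_X\nu=\frac{\cn_c(R)}{\sn_c(R)}\,\varphi_\ast X$ (and directly $\overline\nabla_X\nu=\frac{1}{R}\varphi_\ast X$ when $c=0$). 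Then, for $X,Y\in\Gamma(T\partial\Sigma)$, differentiating $\langle Y,\nu\rangle=0$ gives $\langle\overline\nabla_XY,\nu\rangle=-\langle Y,\overline\nabla_X\nu\rangle$, so that the second fundamental form of $\iota_{\partial\Sigma}:\partial\Sigma\hookrightarrow\Sigma$ with respect to $-\nu$ is
\[
\langle\overline\nabla_XY,-\nu\rangle=\langle Y,\overline\nabla_X\nu\rangle=\frac{\cn_c(R)}{\sn_c(R)}\,\langle X,Y\rangle,
\]
which is exactly (ii); the case $n=2$ is then the assertion about the geodesic curvature of $\partial\Sigma$ in $\Sigma$. (Alternatively, (ii) follows by noting that $\nu=\overline\eta$ is the outward normal of the totally umbilical geodesic sphere $S_R$, whose shape operator with respect to $\overline\eta$ equals $-\frac{\cn_c(R)}{\sn_c(R)}\id$, and reading the sign through $\langle\overline\nabla_XY,-\nu\rangle=-\langle\overline\nabla_XY,\overline\eta\rangle$.)

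The main obstacle is bookkeeping rather than ideas: one must run the cases $c\in\{-1,0,1\}$ in parallel, stay consistent about which unit normal is ``outward'' versus ``inward'', and in the hyperbolic case interpret the symbol $\frac{1}{\sqrt{c}}$ and the derivatives of $\sn_c,\cn_c$ through the Minkowski sign conventions of the model. Once the explicit parametrization of the geodesic sphere is written down, the remaining steps are a short computation, and (ii) is simply the tangential derivative of (i).
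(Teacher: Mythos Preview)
The paper does not give its own proof of this lemma: immediately after the statement it writes ``The proof can be found in \cite[Lemma~1.1]{souam1997stability}.'' So there is nothing to compare against directly.

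Your proposal is correct and is essentially the standard argument (and, indeed, is the computation one finds in Souam's cited paper). You correctly use the standing assumption $\theta=\pi/2$ of Section~\ref{cinco} to identify $\nu$ with the outward unit normal $\overline\eta$ of $S_R=\partial B_R$, so that (i) becomes a statement about the geodesic sphere alone; the geodesic-polar parametrization $x(\rho)=\cn_c(\rho)\,o+\sn_c(\rho)\,v$ and elimination of $v$ then give (i) on the nose. Your derivation of (ii) from (i) by applying the flat ambient derivative to the identity in (i) is also correct: the point you flagged---that the normal correction from $\mathbb{M}^{n+1}(c)\hookrightarrow\mathbb{R}^{n+2}$ (resp.\ $\mathbb{R}^{n+2}_1$) vanishes because $\langle X,\nu\rangle=0$ along $\partial\Sigma$---is exactly what makes $D_X\nu=\overline\nabla_X\nu$ there, and then $\langle\overline\nabla_XY,-\nu\rangle=\langle Y,D_X\nu\rangle=\frac{\cn_c(R)}{\sn_c(R)}\langle X,Y\rangle$ follows. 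Your closing remark about the symbol $\frac{1}{\sqrt{c}}$ in the hyperbolic case is well taken: with the section's convention $c\in\{-1,0,1\}$ the right-hand side in (i) is simply $-\mathbf{e}_{n+2}$ for both $c=\pm1$, and the formula should be read accordingly.
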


The next Lemma gives important relations between $\varphi$ and $\eta$.

\begin{lema}\label{006.2}
Let $\varphi : \Sigma^n \rightarrow \mathbb{M}^{n+1}(c) \subseteq \mathbb{R}^{n+2}$ be a hypersurface. Then
\begin{eqnarray}
L_r\varphi &=& (r+1)S_{r+1}\eta-c(n-r)S_r\varphi \label{0015} \\
L_r\eta &=& -\tr\left(P_rA^2\right)\eta+c(r+1)S_{r+1}\varphi-\nabla S_{r+1}, \label{0016}
\end{eqnarray}
where $L_r\varphi$ and $L_r\eta$ are calculated coordinate-wise. Moreover, if $c=0$ then
\begin{eqnarray}
\frac{1}{2}L_r\left\vert\varphi\right\vert^2 &=& (n-r)S_r+(r+1)S_{r+1}\left<\varphi,\eta\right> \label{0017} \\
L_r\left<\varphi,\eta\right> &=& -(r+1)S_{r+1}-\left(S_1S_{r+1}-(r+2)S_{r+2}\right)\left<\varphi,\eta\right>-\left<\nabla S_{r+1},\varphi^\top\right> \label{0018}
\end{eqnarray}
\end{lema}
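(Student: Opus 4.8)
\textbf{Proof plan for Lemma \ref{006.2}.}

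The strategy is to compute $L_r$ acting on each coordinate function of $\varphi$ and $\eta$, viewed as $\mathbb{R}^{n+2}$-valued maps, by first computing the Hessian (in $\Sigma$, with respect to the induced metric) of these coordinate functions and then contracting with $P_r$. The basic ingredient is the standard formula for the second fundamental form of the inclusion $\mathbb{M}^{n+1}(c)\hookrightarrow\mathbb{R}^{n+2}$: in the flat case $c=0$ this inclusion is totally geodesic, while for $c\neq 0$ the position vector $\varphi$ is (up to the constant $c$) the normal to $\mathbb{M}^{n+1}(c)$ in the surrounding (pseudo-)Euclidean space, so that $\overline\nabla^{\mathbb{R}^{n+2}}_X\varphi = X$ and $\overline\nabla^{\mathbb{R}^{n+2}}_X\eta$ decomposes into the $\Sigma$-tangential part $-AX$, the part along $\eta$ (which vanishes since $|\eta|$ is constant), and the part along $\varphi$, which is $-c\langle X,\varphi\rangle^{?}$-type term coming from the second fundamental form of $\mathbb{M}^{n+1}(c)$. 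Concretely, I would record: $\hess_\Sigma\varphi^a = \mathrm{II}_\Sigma^a - c\,\varphi^a g_\Sigma$ (the second fundamental form of $\Sigma$ in $\mathbb{R}^{n+2}$ splitting into its $\eta$-component and $\varphi$-component), so that $(\hess_\Sigma\varphi^a)^\flat = (\text{Weingarten-type operator})$, and similarly for $\eta^a$. Taking $\tr(P_r\,\cdot\,)$ and invoking Lemma \ref{002.1}(ii), (iii) to evaluate $\tr P_r=(n-r)S_r$ and $\tr(P_rA)=(r+1)S_{r+1}$ gives \eqref{0015}. For \eqref{0016} the same procedure applied to $\eta^a$ produces $\tr(P_rA^2)\eta$ from the $AX\mapsto A^2X$ term, the term $c(r+1)S_{r+1}\varphi$ from the curvature-of-ambient term paired with $\tr(P_rA)$, and the gradient term $-\nabla S_{r+1}$ arises because $\dive P_r=0$ on a space form (Rosenberg, cited after Lemma \ref{002.2}) — one uses that $L_r\eta = \tr(P_r\hess\eta) = \dive(P_r\overline\nabla\eta)$ coordinatewise and the Codazzi-type identity $\sum_i (\nabla_{e_i}A)e_i = \nabla S_1$-style computation, more precisely $\tr(P_r\nabla A) = \nabla S_{r+1}$ which is exactly the content behind $\dive P_r=0$.

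For the last two identities I specialize to $c=0$, where $\varphi$ and $\eta$ are genuinely $\mathbb{R}^{n+1}$-valued and $L_r$ is the "flat" Cheng–Yau operator. For \eqref{0017} I apply the product rule for $L_r$ on $\frac12|\varphi|^2=\frac12\langle\varphi,\varphi\rangle$: in general $L_r\langle V,W\rangle = \langle L_rV,W\rangle + \langle V,L_rW\rangle + 2\langle P_r\overline\nabla V,\overline\nabla W\rangle$ computed coordinatewise, where the last term means $\sum_{i}\langle (\overline\nabla_{e_i}V), P_r\text{-paired with }(\overline\nabla_{e_i}W)\rangle$. With $V=W=\varphi$ and $c=0$: $L_r\varphi=(r+1)S_{r+1}\eta$ from \eqref{0015}, $\langle L_r\varphi,\varphi\rangle = (r+1)S_{r+1}\langle\eta,\varphi\rangle$ (twice), and the mixed term is $\sum_i \langle e_i, P_r e_i\rangle = \tr P_r = (n-r)S_r$ since $\overline\nabla_{e_i}\varphi = e_i$; assembling gives $\frac12 L_r|\varphi|^2 = (n-r)S_r + (r+1)S_{r+1}\langle\varphi,\eta\rangle$. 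For \eqref{0018} I take $V=\varphi$, $W=\eta$: $\langle L_r\varphi,\eta\rangle = (r+1)S_{r+1}$, $\langle\varphi,L_r\eta\rangle = \langle\varphi, -\tr(P_rA^2)\eta - \nabla S_{r+1}\rangle = -\tr(P_rA^2)\langle\varphi,\eta\rangle - \langle\varphi^\top,\nabla S_{r+1}\rangle$, and the mixed term $\sum_i\langle e_i, P_r(-Ae_i)\rangle = -\tr(P_rA) = -(r+1)S_{r+1}$; then $\tr(P_rA^2) = S_1S_{r+1}-(r+2)S_{r+2}$ by Lemma \ref{002.1}(iv), and collecting terms yields \eqref{0018} (the two $\pm(r+1)S_{r+1}$ contributions from $\langle L_r\varphi,\eta\rangle$ and the mixed term combine with the sign as stated).

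The step I expect to be the main obstacle — or at least the one requiring the most care — is the derivation of the gradient term $-\nabla S_{r+1}$ in \eqref{0016}, since it is the place where the divergence-free property of $P_r$ on space forms (equivalently the full Codazzi equation, which fails for general ambient manifolds) is essential, and where one must be careful distinguishing the ambient $\mathbb{R}^{n+2}$ connection, the $\mathbb{M}^{n+1}(c)$ connection, and the $\Sigma$ connection when writing $\hess_\Sigma\eta^a$. A clean way to handle it is to note $L_r f = \dive_\Sigma(P_r\nabla f)$ coordinatewise (valid precisely because $\dive P_r=0$), apply this to $f=\eta^a$, use $\nabla_\Sigma\eta^a = -(Ae_i)^a$ expressed via the frame, and then $\dive_\Sigma(P_rA)^a$ splits by the product rule into $(\dive_\Sigma P_r)\cdot A^a = 0$ plus $\tr(P_r\nabla_\cdot A)^a$; the latter is $\nabla S_{r+1}$ again by the Codazzi/trace identity. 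Everything else is bookkeeping with Lemma \ref{002.1}.
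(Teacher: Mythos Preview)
Your outline is correct and complete. The paper itself does not prove this lemma: it simply cites \cite[Remark 5.1]{rosenberg1993hypersurfaces} for \eqref{0015}--\eqref{0016} and \cite[Lemmas 1(b), 2]{alencar1998integral} for \eqref{0017}--\eqref{0018}. What you sketch is exactly the standard computation carried out in those references --- coordinatewise Hessians of $\varphi$ and $\eta$ in the ambient (pseudo-)Euclidean space, contraction with $P_r$, and the trace identities of Lemma~\ref{002.1}. Your identification of the $-\nabla S_{r+1}$ term as the only subtle point is accurate: it rests on Codazzi in a space form plus the identity $\tr(P_r\nabla_XA)=X(S_{r+1})$, which is equivalent to $\dive P_r=0$. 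One small bookkeeping remark: in your derivation of \eqref{0018} the product rule carries a factor of $2$ on the mixed term, so the cross contribution is $-2(r+1)S_{r+1}$, which then combines with $\langle L_r\varphi,\eta\rangle=(r+1)S_{r+1}$ to give the stated $-(r+1)S_{r+1}$; your parenthetical suggests you are aware of this, but it is worth writing out explicitly.
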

For a proof of \eqref{0015} and \eqref{0016} see \cite[Remark 5.1]{rosenberg1993hypersurfaces} and for a proof of \eqref{0017} and \eqref{0018} see \cite[Lemma 1, (b)]{alencar1998integral} and \cite[Lemma 2]{alencar1998integral}, respectively.

\begin{teo}\label{007.1}
Let $B_R \subseteq \mathbb{M}^{n+1}(c)$ be a geodesic ball with radius $R>0$. Then, for $r>0$, it does not exist a positive $r$-stable $(r+1)$-minimal hypersurface with free boundary in $B_R$.
\end{teo}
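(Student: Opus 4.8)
The plan is to argue by contradiction using the test-function characterization of $r$-stability: if such a positive $r$-stable $(r+1)$-minimal hypersurface $\varphi:\Sigma\to B_R$ existed, then $\mathcal{I}_{r,\pi/2}(f,f)\ge 0$ for all $f\in C_0^\infty(\Sigma)$ (recall that for $H_{r+1}=0$ the volume constraint is dropped), and we would exhibit a test function violating this. Since $\theta=\pi/2$ and we are in a space form, the index form \eqref{0010} reduces, using $S_{r+1}=0$, $S_{r+2}=0$ (as $H_{r+1}=0$ forces $H_{r+2}=0$ by Lemma \ref{002.2}(iii), hence $S_{r+2}=0$), and $\alpha_{\pi/2}=(\emph{II}_{\partial\Omega})_{\overline\eta}(\overline\nu,\overline\nu)$, to
$$
\mathcal{I}_{r,\pi/2}(f,f)=\int_\Sigma \left<P_r\nabla f,\nabla f\right>-c(n-r)S_r f^2\,d\mu_\Sigma+\int_{\partial\Sigma}\left\vert{P_r\nu}\right\vert\alpha_{\pi/2}f^2\,d\mu_{\partial\Sigma}.
$$
So the whole interior curvature potential is just $-c(n-r)S_r$, and for the geodesic ball $B_R$ the relevant boundary term $\alpha_{\pi/2}$ is controlled by Lemma \ref{007.2}(ii) (the second fundamental form of $\partial\Sigma\hookrightarrow\Sigma$ is $\tfrac{\cn_c(R)}{\sn_c(R)}\langle\cdot,\cdot\rangle$; one also needs the analogous data for $\partial\Sigma\hookrightarrow\partial\Omega$).

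The natural test function is $f=\langle\varphi,\eta\rangle$ (the support function), motivated by the fact that in the free-boundary, CMC case this is the classical choice, and here Lemma \ref{006.2} controls $L_r$ of the position vector and the normal. First I would use \eqref{0015} and \eqref{0016} (together with $S_{r+1}=0$) to compute $L_r f$ for $f=\langle\varphi,\eta\rangle$: differentiating the inner product twice and using $L_r\varphi=-c(n-r)S_r\varphi$, $L_r\eta=-\tr(P_rA^2)\eta+ $ (terms that vanish since $S_{r+1}=0$ and $\nabla S_{r+1}=0$), one gets an identity of the form $L_rf=-\tr(P_rA^2)f-2\langle P_r\nabla^\Sigma\langle\varphi,\eta\rangle,\dots\rangle+\dots$ — more precisely the standard computation yields $L_rf+(\tr(P_rA^2)+c\,\tr P_r)f$ being expressible via $L_r$ of $|\varphi|^2$ or directly as a divergence, exactly the combination appearing in $T_r$. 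Plugging this into the quadratic form (via Proposition \ref{004.4}(iii), or directly into $\mathcal{I}_{r,\pi/2}$) should collapse the interior integral to a boundary integral, and then the free-boundary condition plus Lemma \ref{007.2} should be used to show that boundary integral is strictly negative, contradicting $r$-stability. The positivity of $P_r$ enters to keep $\langle P_r\nabla f,\nabla f\rangle\ge 0$ under control and to guarantee that $f=\langle\varphi,\eta\rangle$ is not identically zero (if it were, $\varphi$ would be part of a totally geodesic hypersurface through the center, which cannot have positive principal curvatures, contradicting positive definiteness of $P_r$, which for $H_{r+1}=0$ via Lemma \ref{002.3}(iii) needs $\mathrm{rank}\,A>r$).

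The main obstacle will be the boundary analysis: unlike the closed case, here one must carefully track the conormal derivative $\tfrac{\partial f}{\partial\nu}$ of $f=\langle\varphi,\eta\rangle$ on $\partial\Sigma$ and combine it with $\alpha_{\pi/2}$, using $\nu=\overline\eta$, $\eta=-\overline\nu$ (the free-boundary relations), Lemma \ref{002.4} (so $\nu$ is a principal direction of $\varphi$, which makes $|P_r\nu|$ and $(\emph{II}_\Sigma)_\eta(\nu,\nu)$ computable), and Lemma \ref{007.2}(i) relating $\nu$ and $\varphi$ on $\partial\Sigma$. The delicate point is getting the \emph{sign} right: one expects the boundary term for the geodesic ball to force $\mathcal{I}_{r,\pi/2}(f,f)<0$ precisely because $R>0$ makes $\cn_c(R)/\sn_c(R)$ (or the corresponding geometric quantity on $\partial\Omega$) have a definite sign; if instead $f=\langle\varphi,\eta\rangle$ turns out to not directly work, the fallback is to test with $f=$ (a first coordinate function of $\varphi$, i.e. $\langle\varphi,\mathbf{e}_i\rangle$) or a linear combination, using \eqref{0015} which gives $L_r\langle\varphi,\mathbf{e}_i\rangle=-c(n-r)S_r\langle\varphi,\mathbf{e}_i\rangle$ exactly — this makes each coordinate an "eigenfunction-like" object for $T_r$ up to the $\tr(P_rA^2)$ term, and summing over $i$ with Lemma \ref{002.1}(iv) should produce the desired contradiction via Lemma \ref{002.2} (the positivity of principal curvatures at one point plus $H_{r+1}=0$).
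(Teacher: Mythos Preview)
Your primary plan has a genuine error that kills the simplification. You claim $S_{r+2}=0$ by Lemma~\ref{002.2}(iii), but that lemma requires all $\kappa_i\ge 0$, and this is \emph{incompatible} with $P_r$ being positive definite when $H_{r+1}=0$: if every $\kappa_i\ge 0$ and $\sigma_{r+1}(\kappa)=0$, then at most $r$ of the $\kappa_i$ are nonzero, so some eigenvalue $S_r(A_i)$ of $P_r$ vanishes. Hence the term $S_1S_{r+1}-(r+2)S_{r+2}=\tr(P_rA^2)=\bigl|\sqrt{P_r}\,A\bigr|^2$ (using $S_{r+1}=0$ and Lemma~\ref{002.1}(iv)) stays in the interior potential, and in fact it is the whole engine of the argument. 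Your reduced index form, with interior potential just $-c(n-r)S_r$, is wrong.

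The support-function test $f=\langle\varphi,\eta\rangle$ does not give a contradiction even after restoring that term: for $c=0$, \eqref{0018} with $S_{r+1}\equiv 0$ gives $L_rf=-\tr(P_rA^2)f$, so $T_rf=0$; moreover on $\partial\Sigma$ one has $\eta=-\overline\nu$ tangent to the sphere $\partial B_R$, hence $f=\langle\varphi,\eta\rangle$ vanishes on $\partial\Sigma$, and the whole index form collapses to $\mathcal{I}_{r,\pi/2}(f,f)=0$ --- no contradiction.

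Your fallback is the paper's route, but you are missing the two choices that make it work and misidentifying the final step. One tests with $f_i=\langle\varphi,\mathbf{u}_i\rangle$ for $n$ linearly independent vectors $\mathbf{u}_i$ chosen orthogonal to \emph{both} $\int_\Sigma\varphi\,d\mu_\Sigma$ (so $f_i\in\mathcal{F}$) and $\mathbf{e}_{n+2}$ (so that Lemma~\ref{007.2}(i) yields exactly $\frac{\partial f_i}{\partial\nu}+\alpha_{\pi/2}f_i=0$ on $\partial\Sigma$, killing the boundary integral). Then \eqref{0015} with $S_{r+1}=0$ gives $T_rf_i=-\tr(P_rA^2)f_i=-|\sqrt{P_r}A|^2f_i$, so $\mathcal{I}_{r,\pi/2}(f_i,f_i)=-\int_\Sigma|\sqrt{P_r}A|^2f_i^2\,d\mu_\Sigma\le 0$; $r$-stability forces equality, and summing over $i$ gives $|\sqrt{P_r}A|^2\sum_i f_i^2\equiv 0$. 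A linear-algebra/dimension count shows $\{\sum_i f_i^2=0\}$ has measure zero, so $\sqrt{P_r}A=0$ and hence $A=0$, contradicting $P_r$ positive definite for $r>0$. Lemma~\ref{002.2} is never used; the nonnegativity of $\tr(P_rA^2)$ comes directly from $P_r>0$.
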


\begin{proof}
The proof of this result is based on \cite[Theorem 2.1]{souam1997stability}. Suppose such hypersurface $\varphi : \Sigma \rightarrow B_R \subseteq \mathbb{M}^{n+1}(c)$ exists and define the vector $\widetilde\varphi=\int_\Sigma \varphi\,d\mu_\Sigma$. One can take $n$ linearly independent vectors $\mathbf{u}_1,...,\mathbf{u}_n \in \mathbb{R}^{n+1}$ such that $\left<\mathbf{e}_{n+2},\mathbf{u}_i\right>=0$ and $$0=\left<\widetilde\varphi,\mathbf{u}_i\right>=\int_\Sigma \left<\varphi,\mathbf{u}_i\right>\,d\mu_\Sigma$$ for all $i \in \{1,...,n\}$. Let $f_i=\left<\varphi,\mathbf{u}_i\right> \in \mathcal{F}$ for $i \in \{1,...,n\}$. The formula \eqref{0015}, the hypothesis $S_{r+1}=0$ and the item (i) of Lemma \ref{007.2} yield to
\begin{eqnarray}
T_rf_i &=& -\left(L_r+\tr\left(P_rA^2\right)+c(n-r)S_r\right)f_i \nonumber \\
&=& -\tr\left(P_rA^2\right)f_i=\left\vert\sqrt{P_r}A\right\vert^2f_i, \label{0025}
\end{eqnarray}
and
\begin{equation}\label{0026}
\frac{\partial f_i}{\partial\nu}+\left(\emph{II}_{\partial B}\right)_{\overline\eta}\left(\overline\nu,\overline\nu\right)f_i=\nu\left<\varphi,\mathbf{u}_i\right>-\frac{\cn_c(R)}{\sn_c(R)}\left<\varphi,\mathbf{u}_i\right>=\left<\nu-\frac{\cn_c(R)}{\sn_c(R)}\varphi,\mathbf{u}_i\right>=0
\end{equation}
on $\partial\Sigma$, the equations \eqref{0025} and \eqref{0026} applied to the $r$-stability hypothesis imply that
\begin{eqnarray}
0 &\leq& \mathcal{I}_{r,\pi/2}(f_i,f_i)=\int_\Sigma f_iT_rf_i\,d\mu_\Sigma+\int_{\partial\Sigma}\left\vert{P_r\nu}\right\vert f_i\left(\frac{\partial f_i}{\partial\nu}+\left(\emph{II}_{\partial B}\right)_{\overline\eta}\left(\overline\nu,\overline\nu\right)f_i\right)\,d\mu_{\partial\Sigma} \nonumber \\
&=& -\int_\Sigma \left\vert\sqrt{P_r}A\right\vert^2f_i^2\,d\mu_\Sigma \label{0027}
\end{eqnarray}
for all $i \in \{1,...,n\}$. Summing up the inequality \eqref{0027} for all $i \in \{1,...,n\}$ we obtain $$0 \leq \sum_{i=1}^n \mathcal{I}_{r,\pi/2}(f_i,f_i)=-\int_\Sigma \left\vert\sqrt{P_r}A\right\vert^2\left(\sum_{i=1}^n f_i^2\right)\,d\mu_\Sigma \leq 0.$$ Thus $\left\vert\sqrt{P_r}A\right\vert^2\left(\sum_{i=1}^n f_i^2\right) \equiv 0$ on $\Sigma$. Notice that the functions $f_i \in \mathcal{F}$ are restrictions to $\Sigma$ of $n$ linearly independent linear forms $F_i=\left<\cdot,\mathbf{u}_i\right> \in \left(\mathbb{R}^{n+2}\right)^*$. Since $$\left(\sum_{i=1}^n f_i^2\right)^{-1}(\{0\})=\bigcap_{i=1}^n f_i^{-1}(\{0\})=\bigcap_{i=1}^n \left(\ker F_i \cap \Sigma\right)=\left(\bigcap_{i=1}^n \ker F_i\right) \cap \Sigma \subseteq \left(\bigcap_{i=1}^n \ker F_i\right),$$ the set $\left(\sum_{i=1}^n f_i^2\right)^{-1}(\{0\})$ is contained into a $2$-dimensional linear subspace whose intersection with $\mathbb{M}^{n+1}(c)$ has dimension, at most, equal to one. Thus $\left(\sum_{i=1}^n f_i^2\right)^{-1}(\{0\})$ has measure zero and, therefore, $\left\vert\sqrt{P_r}A\right\vert^2=0$ on $\Sigma$. Since $\sqrt{P_r}$ is also positive definite, it is invertible and $$A=\left(\sqrt{P_r}\right)^{-1}\sqrt{P_r}A=0,$$ proving that $\Sigma$ is totally geodesic and obtaining a contradiction since $P_r$ is assumed to be a definite operator on each point of $\Sigma$.
\end{proof}

Before proving the next theorem, we need the following definition and the following Lemma.

\begin{defn}\label{009.2}
The $k$-th \textbf{coefficient of umbilicity} $\tau_k$ of a hypersurface $\varphi : \Sigma \rightarrow M$ is defined by $$\tau_k=(k+1)^2S_{k+1}^2-(n-k)S_k\tr\left(P_kA^2\right).$$
\end{defn}

\begin{lema}[Umbilicity Lemma]\label{009.3}
If $\kappa_1,...,\kappa_n \geq 0$ and $P_k$ is positive definite on each point of $\Sigma$ then $$\tau_k=(k+1)^2S_{k+1}^2-(n-k)S_k\tr\left(P_kA^2\right) \leq 0$$ and $\tau_k \equiv 0$ if and only if $\kappa_1=...=\kappa_n$.
\end{lema}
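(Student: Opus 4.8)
The plan is to prove the Umbilicity Lemma by reducing it, via the identities in Lemma~\ref{002.1}, to a single pointwise inequality among the symmetric functions of the (nonnegative) principal curvatures, and then to obtain that inequality from the Newton/Maclaurin inequalities of Lemma~\ref{002.2}. First I would rewrite $\tau_k$ purely in terms of the $S_j$. Using Lemma~\ref{002.1}(iv), $\tr(P_kA^2)=S_1S_{k+1}-(k+2)S_{k+2}$, so
\begin{equation*}
\tau_k=(k+1)^2S_{k+1}^2-(n-k)S_k\bigl(S_1S_{k+1}-(k+2)S_{k+2}\bigr).
\end{equation*}
Passing to normalized curvatures via $S_j=\binom{n}{j}H_j$ and expanding the binomial coefficients, the claim $\tau_k\le 0$ becomes, after dividing by a positive combinatorial constant, an inequality of the shape $aH_{k+1}^2\le bH_kH_1H_{k+1}-cH_kH_{k+2}$ with explicit positive coefficients $a,b,c$ depending only on $n,k$; one checks $b=a+c$, so the statement is equivalent to $a\,H_{k+1}(H_1H_{k+1}-H_{k+1})\ge c\,H_k(H_1H_{k+2}\cdots)$ — more precisely it rearranges to a convex combination of the two Newton-type inequalities $H_kH_{k+2}\le H_{k+1}^2$ (Lemma~\ref{002.2}(i)) and $H_1H_{k+1}\ge H_{k+2}$ together with $H_1^2\ge H_2\ge\cdots$ (Lemma~\ref{002.2}(ii),(iii)). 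I expect the cleanest route is: from $H_{k+1}^2\ge H_kH_{k+2}$ multiply by a suitable positive factor, from $H_1H_k\ge H_{k+1}$ (another instance of Lemma~\ref{002.2}(iii), valid since all $\kappa_i\ge 0$) multiply by another, and add.

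Alternatively, and perhaps more transparently, I would argue directly from positivity of $P_k$. Since $P_k$ is positive definite, writing $\mu_i=S_k(A_i)>0$ for its eigenvalues (Lemma~\ref{002.1}(i)), the Cauchy--Schwarz inequality in the inner product $\langle x,y\rangle_\mu=\sum_i \mu_i x_iy_i$ gives
\begin{equation*}
\Bigl(\sum_{i=1}^n \mu_i\kappa_i\Bigr)^2\le\Bigl(\sum_{i=1}^n\mu_i\Bigr)\Bigl(\sum_{i=1}^n\mu_i\kappa_i^2\Bigr),
\end{equation*}
and by Lemma~\ref{002.1}(ii),(iii),(iv) the three sums are exactly $(k+1)S_{k+1}$, $(n-k)S_k$ and $\tr(P_kA^2)$. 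Hence $(k+1)^2S_{k+1}^2\le(n-k)S_k\tr(P_kA^2)$, which is precisely $\tau_k\le 0$. Equality in Cauchy--Schwarz holds iff the vectors $(\sqrt{\mu_i})_i$ and $(\sqrt{\mu_i}\,\kappa_i)_i$ are proportional, i.e.\ iff all $\kappa_i$ are equal (using that every $\mu_i>0$); conversely if $\kappa_1=\cdots=\kappa_n$ then all the listed inequalities in Lemma~\ref{002.2} are equalities and a direct substitution gives $\tau_k=0$. This Cauchy--Schwarz argument is short and self-contained, so I would make it the main line of proof and relegate the $H_j$-rewriting to a remark.

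The main obstacle is the equality discussion in the degenerate direction: one must be careful that $P_k$ positive definite genuinely forces every eigenvalue $\mu_i=S_k(A_i)$ to be strictly positive (this is immediate since the $\mu_i$ are the eigenvalues of $P_k$ by Lemma~\ref{002.1}(i)), because that strict positivity is what upgrades "proportionality of the Cauchy--Schwarz vectors" to "$\kappa_1=\cdots=\kappa_n$". There is also the minor point that the hypothesis $\kappa_i\ge 0$ is not actually needed for the inequality itself in the Cauchy--Schwarz form — only positivity of $P_k$ is — but it is harmless to keep it, and it is exactly what is needed if one instead prefers the $H_j$-inequality route, where the hypotheses of Lemma~\ref{002.2} require nonnegativity of the $\kappa_i$. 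I would state both derivations briefly and finish with the equality case.
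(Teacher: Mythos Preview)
Your Cauchy--Schwarz argument is correct and is genuinely different from the paper's proof. The paper proceeds exactly along the lines of your \emph{first} sketch: it expands $\tau_k$ in terms of the normalized curvatures $H_j$ and then applies Lemma~\ref{002.2}(iii) twice, first $H_1H_{k+1}\ge H_{k+2}$ to bound $\tau_k\le\bigl((n-k)\binom{n}{k}\bigr)^2\bigl(H_{k+1}^2-H_1H_kH_{k+1}\bigr)$, and then $H_1H_k\ge H_{k+1}$ to conclude. The equality case is read off from the equality case in Lemma~\ref{002.2}.

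Your second argument is shorter and more transparent: once one recognizes, via Lemma~\ref{002.1}, that $(k+1)S_{k+1}=\sum_i\mu_i\kappa_i$, $(n-k)S_k=\sum_i\mu_i$ and $\tr(P_kA^2)=\sum_i\mu_i\kappa_i^2$ with $\mu_i=S_k(A_i)>0$, the inequality is a single application of Cauchy--Schwarz with equality iff all $\kappa_i$ coincide. This route has the bonus, which you noted, that the hypothesis $\kappa_i\ge0$ is not needed at all---only positive definiteness of $P_k$ is---whereas the paper's proof does rely on $\kappa_i\ge0$ through Lemma~\ref{002.2}. The paper's computation does yield an intermediate quantitative upper bound for $\tau_k$, but that bound is not used anywhere else, so nothing is lost by taking your route.
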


Notice that for $k=0$, $\tau_0=S_1^2-nS_0\tr A^2=n\left(nH^2-\vert{A}\vert^2\right) \leq 0$.

\begin{proof}
Since $\kappa_1,...,\kappa_n\geq 0$ and $(n-k)\binom{n}{k}=(k+1)\binom{n}{k+1}$ for all $k \in \{0,...,n-1\}$, it follows from Lemma \ref{002.2} that
\begin{eqnarray*}
\tau_k &=& (k+1)^2S_{k+1}^2-(n-k)S_k\left(S_1S_{k+1}-(k+2)S_{k+2}\right) \\
&=& (k+1)^2\binom{n}{k+1}^2H_{k+1}^2-(n-k)\binom{n}{k}H_k\left(n\binom{n}{k+1}H_1H_{k+1}-(k+2)\binom{n}{k+2}H_{k+2}\right) \\
&=& (n-k)\binom{n}{k}\left((n-k)\binom{n}{k}H_{k+1}^2-H_k\left(\frac{n}{k+1}H_1H_{k+1}-(n-k-1)\binom{n}{k+1}H_{k+2}\right)\right) \\
&=& \left((n-k)\binom{n}{k}\right)^2\left(H_{k+1}^2-H_k\left(\frac{nH_1H_{k+1}-(n-k-1)H_{k+2}}{k+1}\right)\right) \\
&\leq& \left((n-k)\binom{n}{k}\right)^2\left(H_{k+1}^2-H_1H_kH_{k+1}\right) \leq 0.
\end{eqnarray*}
If the equality holds on $\Sigma$ then we have $$0=\tau_k \leq H_{k+1}\left((n-k)\binom{n}{k}\right)^2\left(H_{k+1}-H_1H_k\right) \leq 0,$$ which shows that $H_1H_k \equiv H_{k+1}$ on $\Sigma$ and it follows from Lemma \ref{002.2} that $\kappa_1=...=\kappa_n$ at each point of $\Sigma$.
\end{proof}

The next result generalizes \cite[Theorem 3.1]{souam1997stability} for any $r \in \{0,...,n-1\}$ and \cite[Theorem 5.1 (i)]{ainouz2016stable} for any stable $H_{r+1}$-hypersurface with free boundary in totally geodesic hypersurfaces of space forms.

\begin{teo}\label{009.1}
Let $\Pi$ be a totally geodesic hypersurface of $\mathbb{M}^{n+1}(c)$ and let $\varphi : \Sigma \rightarrow \mathbb{M}^{n+1}(c)$ be a compact $r$-stable hypersurface with free boundary in $\Pi$ which lies in one side of $\Pi$. Then $\varphi(\Sigma)$ is a geodesic half-sphere whose center is in $\Pi$.
\end{teo}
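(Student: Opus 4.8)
The strategy is to exploit the ambient ``position vector'' machinery from Lemma~\ref{006.2} together with the free boundary condition on $\Pi$ to build a collection of test functions that plug well into the $r$-index form. Since $\Pi$ is totally geodesic, we may take $\Pi$ to be the slice $\{x_{n+1}=0\}$ in the model of $\mathbb{M}^{n+1}(c)\subset\mathbb{R}^{n+2}$ (so $\overline\eta=\mathbf{e}_{n+1}$ along $\Pi$ up to sign), and the free boundary condition $\eta=-\overline\nu$, $\nu=\overline\eta$ translates into $\langle\varphi,\mathbf{e}_{n+1}\rangle=0$ and $\langle\eta,\mathbf{e}_{n+1}\rangle=0$ along $\partial\Sigma$. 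The natural candidates are the coordinate functions $f_a=\langle\eta,\mathbf{e}_a\rangle$ for the ambient directions $\mathbf{e}_a$ tangent to $\Pi$, and the ``support-type'' functions coming from $\langle\varphi,\mathbf{e}_a\rangle$; because $\Sigma$ lies on one side of $\Pi$, the function $h=\langle\varphi,\mathbf{e}_{n+1}\rangle$ is a nonnegative (or nonpositive) function vanishing exactly on $\partial\Sigma$, and this sign information is what will ultimately force rigidity.

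First I would compute $T_r$ applied to these test functions. Using \eqref{0015}, \eqref{0016} and $S_{r+1}\equiv\text{const}$ (so $\nabla S_{r+1}=0$), one gets clean expressions: $L_r f_a$ and $L_r\langle\varphi,\mathbf{e}_a\rangle$ reduce to combinations of $\tr(P_rA^2)$, $S_r$, $S_{r+1}$ times $f_a$ and $h$-type terms, and after subtracting the zeroth-order part $q_r f=\tr(P_r(A^2+\overline R_\eta))f=(\tr(P_rA^2)+c(n-r)S_r)f$ the curvature terms largely cancel. The upshot should be that a suitable linear combination $\psi$ of $\langle\varphi,\mathbf{e}_{n+1}\rangle$ and (if $c\neq0$) a multiple of a constant, or more precisely a function built to be $L^2$-orthogonal to $1$, satisfies $T_r\psi = (\text{something involving }\tau_r\text{ or }S_{r+1})\psi$ together with the Robin boundary condition $\partial\psi/\partial\nu+\alpha_{\pi/2}\psi=0$ on $\partial\Sigma$ — here $\alpha_{\pi/2}=(\emph{II}_{\partial\Omega})_{\overline\eta}(\overline\nu,\overline\nu)=0$ since $\Pi$ is totally geodesic, so the boundary condition is simply the free-boundary Neumann-type condition that follows from $\psi$ vanishing on $\partial\Sigma$ or from the reflection symmetry. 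Next I would adjust $\psi$ by subtracting its mean so it lies in $\mathcal F$, feed it into $r$-stability $\mathcal I_{r,\pi/2}(\psi,\psi)\ge0$, and compare with the sign coming from the Umbilicity Lemma~\ref{009.3}: the zeroth-order coefficient will be controlled by $\tau_r\le0$, which combined with $\mathcal I_{r,\pi/2}\ge0$ and the boundary term forces $\mathcal I_{r,\pi/2}(\psi,\psi)=0$.

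Once equality is achieved, Lemma~\ref{005.3}(ii) promotes $\psi$ to a Jacobi field, hence $\tau_r\equiv0$ on $\Sigma$, and the Umbilicity Lemma gives $\kappa_1=\dots=\kappa_n$, i.e.\ $\Sigma$ is totally umbilical. A totally umbilical hypersurface of $\mathbb{M}^{n+1}(c)$ is an open piece of a totally geodesic hypersurface, a geodesic sphere, a horosphere or an equidistant hypersurface; the free boundary condition on $\Pi$ (meeting $\Pi$ orthogonally) together with compactness and the one-sidedness rules out all but the geodesic sphere case, and orthogonality to $\Pi$ forces the center of the sphere to lie on $\Pi$, so $\varphi(\Sigma)$ is a geodesic half-sphere centered on $\Pi$ — which is the conclusion. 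For the three models one checks orthogonality using Lemma~\ref{007.2}(i) (adapted to a geodesic hypersphere rather than $B_R$) exactly as in the totally-geodesic-boundary case of \cite{souam1997stability, ainouz2016stable}.

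\textbf{Main obstacle.} The delicate point is the bookkeeping in the first step: producing a test function in $\mathcal F$ whose image under $T_r$ has a definite sign relation to it, uniformly over the three ambient curvatures $c\in\{-1,0,1\}$, since the formulas \eqref{0015}--\eqref{0018} are stated cleanly only for $c=0$, so for $c\neq0$ one must either redo the computation in the quadric model or use the ambient analogues of $L_r\langle\varphi,\mathbf{e}_a\rangle$. Correctly matching the boundary term $\int_{\partial\Sigma}|P_r\nu|\,\psi(\partial\psi/\partial\nu+\alpha_{\pi/2}\psi)\,d\mu_{\partial\Sigma}$ — showing it vanishes or has the right sign using $\langle\varphi,\mathbf{e}_{n+1}\rangle=0$ and Lemma~\ref{002.4} (so $\nu$ is a principal direction) — is the second subtlety, and it is where the hypothesis that $\Pi$ is \emph{totally geodesic} (not merely totally umbilical) is used in an essential way.
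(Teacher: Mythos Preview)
Your overall instinct — plug ambient coordinate functions into the $r$-index form and force $\tau_r\equiv 0$ via the Umbilicity Lemma — is the right one, and it is exactly what the paper does in the case $c=1$. But the specific plan you describe has a real gap, and your approach also diverges from the paper in the non-spherical cases.

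\medskip

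\textbf{The case split.} The paper does \emph{not} run a unified test-function argument for all $c$. For $c\in\{-1,0\}$ it reflects $\Sigma$ across $\Pi$ to obtain a closed $H_{r+1}$-hypersurface $\widetilde\Sigma=\Sigma\cup\phi(\Sigma)$, checks that $r$-stability survives the reflection, and then invokes the known closed-case classification (Barbosa--Colares for $\mathbb{R}^{n+1}$ and $\mathbb{H}^{n+1}$) to conclude $\widetilde\Sigma$ is a geodesic sphere. Your proposal never mentions this reflection; it is precisely how the one-sidedness hypothesis is consumed in those cases. The test-function machinery is reserved for $c=1$, where the closed classification only applies inside a hemisphere and the doubled surface need not lie in one.

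\medskip

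\textbf{The gap in your $\psi$.} For $c=1$ your central claim is that some combination $\psi$ of $h=\langle\varphi,\mathbf e_{n+1}\rangle$ and a constant satisfies $T_r\psi=(\text{something in }\tau_r)\,\psi$. This does not hold. From \eqref{0015} one gets
\[
T_r h \;=\; -(r+1)S_{r+1}\langle\eta,\mathbf e_{n+1}\rangle \;-\;\tr(P_rA^2)\,h,
\]
so $T_r h$ unavoidably produces the companion function $\langle\eta,\mathbf e_{n+1}\rangle$; no linear combination of $h$ and $1$ closes up under $T_r$. Moreover $h$ has a sign (by one-sidedness), so $h\notin\mathcal F$, and subtracting the mean destroys the algebraic form you need.

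\medskip

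\textbf{What the paper actually does for $c=1$.} Because $\Pi$ is totally geodesic, $\nu$ is a \emph{constant} vector $\mathbf v\in\mathbb R^{n+2}$ along $\partial\Sigma$. The paper picks $n-1$ directions $\mathbf u_i$ orthogonal to $\widetilde\varphi=\int_\Sigma\varphi$, $\widetilde\eta=\int_\Sigma\eta$, and $\mathbf v$, and uses the \emph{pairs}
\[
f_i=\langle\varphi,\mathbf u_i\rangle,\qquad g_i=\langle\eta,\mathbf u_i\rangle,
\]
which then lie in $\mathcal F$ by construction and satisfy $\partial f_i/\partial\nu=\partial g_i/\partial\nu=0$ on $\partial\Sigma$ (the latter using Lemma~\ref{002.4}). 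Neither $f_i$ nor $g_i$ is an eigenfunction of $T_r$; instead the paper adds the two index forms and completes the square via the Umbilicity Lemma:
\[
0\;\le\;\mathcal I_{r,\pi/2}(f_i,f_i)+\mathcal I_{r,\pi/2}(g_i,g_i)
\;\le\;-\int_\Sigma\frac{\big((r+1)S_{r+1}f_i+(n-r)S_r g_i\big)^2}{(n-r)S_r}\,d\mu_\Sigma\;\le\;0,
\]
whence $\int\frac{\tau_r}{(n-r)S_r}f_i^2=0$ for each $i$, and a measure-zero argument (plus Hopf's theorem when $n=2$) forces $\tau_r\equiv 0$. That pairing, not a single $\psi$, is the mechanism that links stability to umbilicity.
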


\begin{proof}
 First consider $c \in \{-1,0\}$ and, without loss of generality, assume that $\Pi=\widetilde\Pi \cap \mathbb{M}^{n+1}(c)$, where $\widetilde\Pi$ is the hyperplane of $\mathbb{R}^{n+2}$ with equation $x_1=0$. If $\phi \in \Isom(\mathbb{M}^{n+1}(c))$ is the isometry defined by $$\phi(x_1,x_2,...,x_{n+2})=(-x_1,x_2,...,x_{n+2}), \quad (x_1,...,x_{n+2}) \in \mathbb{M}^{n+1}(c),$$ then the image of $\Sigma$ through $\phi$ is also a $r$-stable $H_{r+1}$-hypersurface with free boundary in $\Pi$ since given any function $f_0 \in \mathcal{F}_{\phi(\Sigma)}=\left\{f \in H^1(\phi(\Sigma))\,\left|\right.\,\int_{\phi(\Sigma)}f\,d\mu_{\phi(\Sigma)}=0\right\}$, we have $f_0 \circ \phi\vert_\Sigma^{-1} \in \mathcal{F}_\Sigma$. Thus, $\mathcal{I}_r^{\phi(\Sigma)}(f_0,f_0)=\mathcal{I}_r^\Sigma(f_0\circ\phi\vert_\Sigma^{-1},f_0\circ\phi\vert_\Sigma^{-1}) \geq 0$, proving the claim. Now since $\widetilde\Sigma=\Sigma \cup \phi(\Sigma)$ is a closed $H_{r+1}$-hypersurface in $\mathbb{M}^{n+1}(c)$, the same argument used in \cite[Remark 5.2]{ainouz2016stable} gives that $\widetilde\Sigma$ is a geodesic sphere and $\Sigma$ is a geodesic half-sphere.

Now consider the case that $c=1$. Since \cite[Theorem 5.3]{marques1997stability} only holds for hypersurfaces contained in a hemisphere of $\mathbb{S}^{n+1}$, the approach used in the case of $c \leq 0$ cannot be used here. Let $\widetilde\varphi=\int_\Sigma\varphi\,d\mu_\Sigma$ and $\widetilde\eta=\int_\Sigma\eta\,d\mu_\Sigma$, seen as constant vectors in $\mathbb{R}^{n+2}$. Since the unit normal vector field of $\Pi \cong \mathbb{S}^n$ is the restriction of a constant vector field of $\mathbb{R}^{n+2}$, the free boundary condition implies that $\nu$ is equal to a constant vector field named $\mathbf{v} \in \mathbb{R}^{n+2}$. Thus one can find $n-1$ linearly independent vectors $\mathbf{u}_1,...,\mathbf{u}_{n-1} \in \mathbb{R}^{n+2}$ such that
\begin{equation}\label{0028}
\left<\widetilde\varphi,\mathbf{u}_i\right>=\left<\widetilde\eta,\mathbf{u}_i\right>=\left<\mathbf{v},\mathbf{u}_i\right>=0, \quad i \in \{1,...,n-1\}.
\end{equation}

For each $i \in \{1,...,n-1\}$ define the functions $f_i=\left<\varphi,\mathbf{u}_i\right>$, $g_i=\left<\eta,\mathbf{u}_i\right>$, $i \in \{1,...,n-1\}$, on $\Sigma$. From \eqref{0028}, we have that $f_i,g_i \in \mathcal{F} \cap C^\infty(\Sigma)$ for all $i \in \{1,...,n\}$. The equations \eqref{0015} and \eqref{0016} of Lemma \ref{006.2} gives for $i \in \{1,...,n-1\}$
\begin{eqnarray}
T_rf_i &=& -L_r\left<\varphi,\mathbf{u}_i\right>-\left(\tr\left(P_rA^2\right)+(n-r)S_r\right)\left<\varphi,\mathbf{u}_i\right> \nonumber \\
&=& -\left(r+1\right)S_{r+1}g_i-\tr\left(P_rA^2\right)f_i \label{0029}
\end{eqnarray}
and
\begin{eqnarray}
T_rg_i &=& -L_r\left<\eta,\mathbf{u}_i\right>-\left(\tr\left(P_rA^2\right)+(n-r)S_r\right)\left<\eta,\mathbf{u}_i\right> \nonumber \\
&=& -\left(r+1\right)S_{r+1}f_i-\left(n-r\right)S_rg_i. \label{0030}
\end{eqnarray}
Since Lemma \ref{002.4} implies that $\nu$ is a principal direction of $\Sigma$ along $\partial\Sigma$, for $i \in \{1,..,n-1\}$ we have on $\partial\Sigma$
\begin{eqnarray}
\frac{\partial f_i}{\partial\nu} &=& \nu\left<\varphi,\mathbf{u}_i\right>=\left<\nu,\mathbf{u}_i\right>=0 \label{0031} \\
\frac{\partial g_i}{\partial\nu} &=& \nu\left<\eta,\mathbf{u}_i\right>=\left<\widetilde\nabla_\nu\eta,\mathbf{u}_i\right>=\left<\overline\nabla_\nu\eta+\left<\nu,\eta\right>\varphi,\mathbf{u}_i\right>=-\left<A\nu,\mathbf{u}_i\right>=-\left\vert{A\nu}\right\vert\left<\nu,\mathbf{u}_i\right>=0, \label{0032}
\end{eqnarray}
where $\widetilde\nabla$ is the Levi-Civita connection of $\mathbb{R}^{n+2}$. By using \eqref{0029}, \eqref{0030}, \eqref{0031} and \eqref{0032} in \eqref{0010} we obtain for $i \in \{1,...,n-1\}$
\begin{eqnarray}
\mathcal{I}_{r,\pi/2}\left(f_i,f_i\right) &=& \int_\Sigma f_iT_rf_i\,d\mu_\Sigma+\int_{\partial\Sigma}\left\vert{P_r\nu}\right\vert f_i\frac{\partial f_i}{\partial\nu}\,d\mu_{\partial\Sigma} \nonumber \\
&=& -\int_\Sigma \tr\left(P_rA^2\right)f_i^2+\left(r+1\right)S_{r+1}f_ig_i\,d\mu_\Sigma \label{0033} \\
\mathcal{I}_{r,\pi/2}\left(g_i,g_i\right) &=& \int_\Sigma g_iT_rg_i\,d\mu_\Sigma+\int_{\partial\Sigma}\left\vert{P_r\nu}\right\vert g_i\frac{\partial g_i}{\partial\nu}\,d\mu_{\partial\Sigma} \nonumber \\
&=& -\int_\Sigma \left(r+1\right)S_{r+1}f_ig_i+\left(n-r\right)S_rg_i^2\,d\mu_\Sigma \label{0034}
\end{eqnarray}

The $r$-stability hypothesis applied to the equations \eqref{0033} and \eqref{0034} for all $i \in \{1,...,n-1\}$ yield to
\begin{eqnarray}
0 &\leq& \mathcal{I}_r\left(f_i,f_i\right)+\mathcal{I}_r\left(g_i,g_i\right) \nonumber \\
&=& -\int_\Sigma \tr\left(P_rA^2\right)f_i^2+2\left(r+1\right)S_{r+1}f_ig_i+\left(n-r\right)S_rg_i^2\,d\mu_\Sigma \nonumber \\
&=& -\int_\Sigma\frac{1}{(n-r)S_r}\left((n-r)S_r\tr\left(P_rA^2\right)f_i^2+2(n-r)(r+1)S_rS_{r+1}f_ig_i+(n-r)^2S_r^2g_i^2\right)\,d\mu_\Sigma \nonumber \\
&\leq& -\int_\Sigma\frac{1}{(n-r)S_r}\left((r+1)^2S_{r+1}^2f_i^2+2(n-r)(r+1)S_rS_{r+1}f_ig_i+(n-r)^2S_r^2g_i^2\right)\,d\mu_\Sigma \label{0035} \\
&=& -\int_\Sigma\frac{\left((r+1)S_{r+1}f_i+(n-r)S_rg_i\right)^2}{(n-r)S_r}\,d\mu_\Sigma \leq 0, \label{0036}
\end{eqnarray}
where \eqref{0035} is a consequence of Lemma \ref{009.3}. Since \eqref{0036} holds, we have for all $i \in \{1,..,n\}$,
$$\int_\Sigma\frac{\left((r+1)S_{r+1}f_i+(n-r)S_rg_i\right)^2}{(n-r)S_r}\,d\mu_\Sigma=\int_\Sigma \tr\left(P_rA^2\right)f_i^2+2\left(r+1\right)S_{r+1}f_ig_i+\left(n-r\right)S_rg_i^2\,d\mu_\Sigma=0,$$ hence
\begin{eqnarray}
0 &=& \int_\Sigma\frac{\left((r+1)S_{r+1}f_i+(n-r)S_rg_i\right)^2}{(n-r)S_r}-\left(\tr\left(P_rA^2\right)f_i^2+2\left(r+1\right)S_{r+1}f_ig_i+\left(n-r\right)S_rg_i^2\right)\,d\mu_\Sigma \nonumber \\
&=& \int_\Sigma\frac{\tau_r}{(n-r)S_r}f_i^2\,d\mu_\Sigma. \label{0037}
\end{eqnarray}
Summing up \eqref{0037} for $i \in \{1,...,n-1\}$ we get $\int_\Sigma \frac{\tau_r}{(n-r)S_r}\sum_{i=1}^{n-1}f_i^2\,d\mu_\Sigma=0$, and since $\tau_r \leq 0$ and $H_r>0$, we conclude that $\tau_r\sum_{i=1}^{n-1} f_i^2=0$ on $\Sigma$.

If $n>2$ then the argument used in Theorem \ref{007.1} to prove $\left(\sum_{i=1}^{n-1} f_i^2\right)^{-1}(\{0\})$ has measure zero can be used here and we conclude that $\varphi(\Sigma)$ is totally umbilical. Otherwise, if $n=2$ then, by the holomorphic Hopf differential, it is known that $\Sigma$ is totally umbilical or its umbilic points are isolated. If $\Sigma$ is not totally umbilical we would obtain that $f_1 \equiv 0$. But this implies that $\varphi\left(\Sigma\right)$ is contained in a $3$-dimensional subspace of $\mathbb{R}^4$ and intersecting with $\mathbb{S}^3$ we would conclude that $\varphi\left(\Sigma\right)$ is contained in an equator of $\mathbb{S}^3$, which is a contradiction.
\end{proof}

\section{Symmetric $r$-stability}\label{seis}
Inspired by the results in \cite{elbert2019note}, we provide a notion of symmetric $r$-stability for positive definite capillary $H_{r+1}$-hypersurfaces with constant contact angle $\theta \in (0,\pi)$. In the case $r=0$ or in the case $r>0$ and $M=\mathbb{M}^{n+1}(c)$, the notion of symmetric $r$-stability is equivalent to that given in the Definition \ref{004.3}.
In the case $r>0$ and general ambient space, they do not coincide and we will have, therefore, two different notions of stability to work with. The symmetric stability will equip the theory for $r>0$ in a general ambient space with a bilinear symmetric form, which is the key that allows to mimic part of the classical CMC stability theory.

Let us fix notation before starting the next result. We set
\begin{equation}\label{0075}
Q_r=q_r-\frac{\left\vert\sqrt{P_r}X_r\right\vert^2}{4}-\frac{\dive\left(P_rX_r\right)}{2} =\tr\left(P_r\left(A^2+\overline{R}_\eta\right)\right)-\frac{\left\vert\sqrt{P_r}X_r\right\vert^2}{4}-\frac{\dive\left(P_rX_r\right)}{2},
\end{equation}
where $X_r=-P_r^{-1}\dive P_r$.

\begin{prop}\label{011.1}
Let $\varphi : \Sigma^n \rightarrow \Omega \subseteq M$ be a strongly $r$-stable capillary hypersurface supported on $\partial\Omega$. Then the symmetric bilinear form $\mathcal{I}_{r,\theta}^S : H^1(\Sigma) \times H^1(\Sigma) \rightarrow \mathbb{R}$ defined by $$\mathcal{I}_{r,\theta}^S(f_1,f_2)=\int_\Sigma \left<P_r\nabla f_1,\nabla f_2\right>-Q_rf_1f_2\,d\mu_\Sigma+\int_{\partial\Sigma}\left\vert{P_r\nu}\right\vert\left(\alpha_\theta-\frac{\left<X_r,\nu\right>}{2}\right)f_1f_2\,d\mu_{\partial\Sigma},$$ is positive definite.
\end{prop}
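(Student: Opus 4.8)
The plan is to show that $\mathcal{I}_{r,\theta}^S$ is obtained from the (non-symmetric) $r$-stability form $\mathcal{I}_{r,\theta}$—or more precisely from the bilinear form attached to $T_r$ via \eqref{0006}—by a ``completing the square'' manipulation using the drift vector field $X_r=-P_r^{-1}\dive P_r$, and then to invoke strong $r$-stability together with Proposition \ref{004.4}. First I would recall that for a strongly $r$-stable $\Sigma$, item (iii) of Proposition \ref{004.4} furnishes a positive function $f_0\in C^\infty(\Sigma)$ with $T_rf_0\ge 0$ in $\Sigma$ and $\partial f_0/\partial\nu+\alpha_\theta f_0=0$ on $\partial\Sigma$. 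Writing $u=-\tfrac12\log$ of an appropriate positive function (or, more directly, using the gradient of $\log f_0$ as a comparison vector field), the strategy is to substitute $f=\tilde f f_0$ as in the proof of Proposition \ref{004.4} but now \emph{keeping} the first-order term that arises from $\dive P_r\neq 0$, which is exactly what produces the $|\sqrt{P_r}X_r|^2/4$ and $\dive(P_rX_r)/2$ corrections in $Q_r$ and the $\langle X_r,\nu\rangle/2$ correction in the boundary term.

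The key steps, in order, would be: (1) expand $\int_\Sigma \langle P_r\nabla f_1,\nabla f_2\rangle\,d\mu_\Sigma$ via the divergence theorem, being careful that $L_r g=\dive(P_r\nabla g)-\langle \dive P_r,\nabla g\rangle$ so that the ``missing'' $\langle \dive P_r,\nabla g\rangle$ term is precisely $\langle P_rX_r,\nabla g\rangle$ up to sign; (2) for a fixed test function $f$, set $f=e^{w}h$ with $w$ chosen so that the cross term $\langle P_r\nabla w,\nabla(\cdot)\rangle$ absorbs the drift—concretely $w$ satisfying $P_r\nabla w=\tfrac12 P_r X_r$, i.e. $\nabla w=\tfrac12 X_r$ when this is exact, or else carry $X_r$ through directly and complete the square pointwise; (3) collect terms: the pointwise identity $\langle P_r\nabla f,\nabla f\rangle = \langle P_r\nabla h,\nabla h\rangle\, (\text{weighted}) + (\text{square in }h) + (\text{divergence})$ turns $q_r f^2$ into $Q_r h^2$ after the divergence term is integrated by parts, generating the $\dive(P_rX_r)/2$ and $\langle X_r,\nu\rangle/2$ pieces; (4) conclude that $\mathcal{I}_{r,\theta}^S(f,f)$ equals a weighted Dirichlet-type integral $\int_\Sigma (\text{positive weight})\,\langle P_r\nabla h,\nabla h\rangle\,d\mu_\Sigma$ plus a term controlled by $\int f (T_r^{\mathrm{sym}}$-type quantity$)$, which strong $r$-stability forces to be $\ge 0$; positive-definiteness then follows because equality forces $\nabla h\equiv 0$, hence $f$ proportional to the weight function, and a separate check (using $T_rf_0\ge 0$ and the sign of the principal eigenvalue, or positivity of $|\sqrt{P_r}\cdot|$) rules out a nonzero kernel.

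I expect the main obstacle to be step (2)–(3): making the ``completion of the square'' rigorous when $X_r=-P_r^{-1}\dive P_r$ is not a gradient, so that one cannot literally write $f=e^w h$. The honest route is to perform the algebra purely pointwise—write $\langle P_r\nabla f,\nabla f\rangle$, add and subtract $\langle P_r X_r,\nabla f\rangle f$ and $\tfrac14|\sqrt{P_r}X_r|^2 f^2$, recognize $\langle P_r\nabla f,\nabla f\rangle - \langle P_rX_r,\nabla f\rangle f + \tfrac14|\sqrt{P_r}X_r|^2 f^2 = |\sqrt{P_r}(\nabla f - \tfrac12 X_r f)|^2\ge 0$, and then handle the leftover linear-in-$\nabla f$ term $\langle P_rX_r,\nabla f\rangle f = \tfrac12\langle P_rX_r,\nabla f^2\rangle$ by the divergence theorem, which is exactly where $\tfrac12\dive(P_rX_r)$ and the boundary $\tfrac12\langle X_r,\nu\rangle|P_r\nu|$ surface. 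One must also track that the boundary conormal term $\int_{\partial\Sigma}|P_r\nu|\,f\,\partial f/\partial\nu$ from \eqref{0006} combines cleanly with $\tfrac12\langle X_r,\nu\rangle$, using Lemma \ref{002.4} (so that $\nu$ is a principal direction and $P_r\nu$ is parallel to $\nu$, giving $\langle P_r X_r,\nu\rangle=\langle X_r,\nu\rangle|P_r\nu|$ after normalization). The remaining content is then precisely the strong $r$-stability inequality applied to the ``de-drifted'' test function, so no further analytic input is needed beyond Proposition \ref{004.4}.
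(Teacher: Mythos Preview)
Your ``honest route''---the pointwise completion of the square together with one application of the divergence theorem---is correct, and it is in fact a more direct argument than the paper's. Carrying it through, one obtains the exact identity
\[
\mathcal{I}_{r,\theta}^S(f,f)\;=\;\Bigl[\int_\Sigma\langle P_r\nabla f,\nabla f\rangle-f\langle P_rX_r,\nabla f\rangle-q_rf^2\,d\mu_\Sigma+\int_{\partial\Sigma}|P_r\nu|\,\alpha_\theta f^2\,d\mu_{\partial\Sigma}\Bigr]+\tfrac14\int_\Sigma|\sqrt{P_r}X_r|^2f^2\,d\mu_\Sigma,
\]
and the bracketed expression is precisely $\left.\partial_t\mathcal{F}_{r,\theta}[\Sigma_t]\right|_{t=0}$ from \eqref{0006} for the variation with support function $f$ (use $L_rf=\dive(P_r\nabla f)+\langle P_rX_r,\nabla f\rangle$ and Lemma~\ref{002.4} to write $\langle P_rX_r,\nu\rangle=|P_r\nu|\langle X_r,\nu\rangle$). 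Strong $r$-stability applied to $f$ itself---not to any ``de-drifted'' test function; that phrase in your sketch is a slip---plus the manifest non-negativity of the correction term finishes the proof. You never need to invoke the positive supersolution $f_0$ from Proposition~\ref{004.4}(iii), nor any substitution $f=e^wh$.

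The paper proceeds differently, in the spirit of Fischer--Colbrie--Schoen: it takes the positive $f_0$ with $T_rf_0\ge 0$ and $\partial_\nu f_0+\alpha_\theta f_0=0$, sets $Y_r=\nabla\log f_0+\tfrac12 X_r$, and rewrites the supersolution condition as the pointwise inequality $-\dive(P_rY_r)-|\sqrt{P_r}Y_r|^2-Q_r\ge 0$. Multiplying by $f^2$, integrating by parts, and applying Cauchy--Schwarz/AM--GM to $2f\langle\sqrt{P_r}\nabla f,\sqrt{P_r}Y_r\rangle$ yields $\mathcal{I}_{r,\theta}^S(f,f)\ge 0$, the Robin boundary condition on $f_0$ producing the $\alpha_\theta$ term. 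Your route is shorter and makes transparent that $\mathcal{I}_{r,\theta}^S$ dominates the (generally non-symmetric) stability form by an explicit non-negative remainder; the paper's route has the virtue of isolating a pointwise differential inequality, which is sometimes useful downstream.

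One final remark: both arguments establish only $\mathcal{I}_{r,\theta}^S(f,f)\ge 0$. The statement's phrase ``positive definite'' should be read as non-negativity of the associated quadratic form; neither proof addresses a nontrivial kernel, and your outline of how strict definiteness would follow is extra content not claimed in the paper.
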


\begin{proof}
Let $f_0 \in C^\infty(\Sigma)$ be a positive function such that $\frac{\partial f_0}{\partial\nu}+\alpha_\theta f_0=0$ on $\partial\Sigma$ and $T_rf_0 \geq 0$ in $\Sigma$. Then denoting $\widetilde{f}=\log f_0 \in C^\infty(\Sigma)$ we have $\nabla\widetilde{f}=\frac{\nabla f_0}{f_0}$ in $\Sigma$ and
\begin{eqnarray}
0 &\leq& \frac{T_rf_0}{f_0}=-\frac{1}{f_0}\dive\left(P_r\nabla f_0\right)+\left<\dive P_r,\frac{\nabla f_0}{f_0}\right>-q_r \nonumber \\
&=& -\dive\left(P_r\frac{\nabla f_0}{f_0}\right)+\left<P_r\nabla f_0,\nabla\left(\frac{1}{f_0}\right)\right>-\left<P_r\left(-P_r^{-1}\dive P_r\right),\nabla\widetilde{f}\right>-q_r \nonumber \\
&=& -\dive\left(P_r\nabla\widetilde{f}\right)-\left<P_r\nabla\widetilde{f},\nabla\widetilde{f}\right>-2\left<P_r\frac{X_r}{2},\nabla\widetilde{f}\right>-q_r \nonumber \\
&=& -\dive\left(P_r\nabla\widetilde{f}\right)-\left\vert{P_r\left(\nabla\widetilde{f}+\frac{X_r}{2}\right)}\right\vert^2+\frac{\vert\sqrt{P_r}X_r\vert^2}{4}-q_r \nonumber \\
&=& -\dive\left(P_r\left(\nabla\widetilde{f}+\frac{X_r}{2}\right)\right)-\left\vert{P_r\left(\nabla\widetilde{f}+\frac{X_r}{2}\right)}\right\vert^2-\left(q_r-\frac{\vert\sqrt{P_r}X_r\vert^2}{4}-\frac{\dive\left(P_rX_r\right)}{2}\right). \label{0053}
\end{eqnarray}
Denoting $Y_r=\nabla\widetilde{f}+\frac{X_r}{2}$ we obtain that $-\dive\left(P_r Y_r\right)-\vert\sqrt{P_r}Y_r\vert^2-Q_r \geq 0$. Thus, if $f \in C^\infty(\Sigma)$ then \eqref{0053} gives
\begin{eqnarray}
0 &\leq& \int_\Sigma -f^2\dive\left(P_rY_r\right)-f^2\vert\sqrt{P_r}Y_r\vert^2-Q_rf^2\,d\mu_\Sigma \nonumber \\
&=& \int_\Sigma \left<P_rY_r,\nabla\left(f^2\right)\right>-f^2\vert\sqrt{P_r}Y_r\vert^2-Q_rf^2\,d\mu_\Sigma-\int_\Sigma \dive\left(f^2P_rY_r\right)\,d\mu_\Sigma \nonumber \\
&=& \int_\Sigma 2f\left<\sqrt{P_r}\nabla f,\sqrt{P_r}Y_r\right>-f^2\vert\sqrt{P_r}Y_r\vert^2-Q_rf^2\,d\mu_\Sigma-\int_{\partial\Sigma} f^2\left<P_rY_r,\nu\right>\,d\mu_{\partial\Sigma}. \label{0054}
\end{eqnarray}
Since the Cauchy-Schwarz and geometric-arithmetic inequality gives $$2f\left<\sqrt{P_r}\nabla f,\sqrt{P_r}Y_r\right> \leq 2\vert{f}\vert\vert\sqrt{P_r}\nabla f\vert\vert\sqrt{P_r}Y_r\vert \leq \vert\sqrt{P_r}\nabla f\vert^2+f^2\vert\sqrt{P_r}Y_r\vert^2,$$ \eqref{0054} implies that
\begin{equation}\label{0055}
\int_\Sigma \vert\sqrt{P_r}\nabla f\vert^2-Q_rf^2\,d\mu_\Sigma-\int_{\partial\Sigma} \left\vert{P_r\nu}\right\vert\left<\nabla\widetilde{f}+\frac{X_r}{2},\nu\right>f^2\,d\mu_{\partial\Sigma} \geq 0
\end{equation}
for all $f \in C^\infty(\Sigma)$. Also since $\left<\nabla\widetilde{f},\nu\right>=\frac{1}{f_0}\frac{\partial f_0}{\partial\nu}=-\alpha_\theta$ on $\partial\Sigma$, we conclude from \eqref{0052} and \eqref{0055} that $$\mathcal{I}_{r,\theta}^S(f,f)=\int_\Sigma \left<P_r\nabla f,\nabla f\right>-Q_rf^2\,d\mu_\Sigma+\int_{\partial\Sigma}\left\vert{P_r\nu}\right\vert\left(\alpha_\theta-\frac{\left<X_r,\nu\right>}{2}\right)f^2\,d\mu_{\partial\Sigma} \geq 0,$$ proving the claim.
\end{proof}

Inspired by the above result, we can revisit some of the definitions we gave and obtain their ``symmetrized" versions.
\begin{defn}\label{011.2}
For a positive definite $H_{r+1}$-hypersurface $\varphi : \Sigma^n \rightarrow M$ we define the \textbf{symmetric r-index form} of $\varphi$ by
\begin{multline}\label{0052}
\mathcal{I}_{r,\theta}^S(f_1,f_2)=\int_\Sigma \left<P_r\nabla f_1,\nabla f_2\right>-Q_rf_1f_2\,d\mu_\Sigma+\\+\int_{\partial\Sigma}\left\vert{P_r\nu}\right\vert\left(\csc\theta\left(\emph{II}_{\partial\Omega}\right)_{\overline\eta}(\overline\nu,\overline\nu)-\cot\theta\left(\emph{II}_\Sigma\right)_\eta(\nu,\nu)-\frac{\left<X_r,\nu\right>}{2}\right)f_1f_2\,d\mu_{\partial\Sigma}.
\end{multline}
\end{defn}

\begin{defn}\label{011.3}
A positive definite capillary $H_{r+1}$-hypersurface $\varphi : \Sigma^n \rightarrow \Omega \subseteq M$ supported on $\partial\Omega$ is \textbf{symmetric $r$-stable} if $\mathcal{I}_{r,\theta}^S(f,f) \geq 0$ for all $f \in \mathcal{F}$. If the inequality holds for all $f \in H^1(\Sigma)$ then the hypersurface is called \textbf{strongly symmetric $r$-stable}.
\end{defn}

We notice that when $r=0$ or $M=\mathbb{M}^{n+1}(c)$, $X_r=0$ in \eqref{0075} and, in this case, $\mathcal{I}_{r,\theta}^S$ given by \eqref{0052} coincides with the index formula given in Definition \ref{005.1} and $r$-stability and symmetric $r$-stability coincide. In the general case, from Proposition \ref{011.1} we conclude that
\begin{cor}\label{011.4}
Let $\varphi : \Sigma \rightarrow M$ be a positive definite capillary $H_{r+1}$-hypersurface supported on $\partial\Omega$. If $\Sigma$ is $r$-stable then it is symmetric $r$-stable. 
\end{cor}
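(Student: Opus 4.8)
The plan is to establish, by an integration by parts, an exact identity expressing the symmetric $r$-index form $\mathcal{I}_{r,\theta}^{S}(f,f)$ as the second variation appearing in Theorem \ref{004.2} plus a manifestly non-negative term; $r$-stability then forces $\mathcal{I}_{r,\theta}^{S}$ to be non-negative on $\mathcal{F}$.

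First I would rewrite the right-hand side of \eqref{0006} in ``symmetric'' form. Using $L_rf=\dive(P_r\nabla f)-\langle\dive P_r,\nabla f\rangle$, Stokes' theorem, and Lemma \ref{002.4} (so that $\nu$ is a principal direction and hence $P_r\nu=|P_r\nu|\,\nu$ by self-adjointness of $P_r$), the term $-\int_\Sigma fL_rf$ produces a boundary integral $\int_{\partial\Sigma}|P_r\nu|f\frac{\partial f}{\partial\nu}$ that cancels the one already present in \eqref{0006}. Then, writing $\dive P_r=-P_rX_r$ and integrating by parts the remaining interior term, $\int_\Sigma f\langle\dive P_r,\nabla f\rangle=-\tfrac12\int_\Sigma\langle P_rX_r,\nabla(f^2)\rangle$, one gets
\begin{equation*}
\left.\frac{\partial}{\partial t}\mathcal{F}_{r,\theta}[\Sigma_t]\right|_{t=0}=\int_\Sigma\langle P_r\nabla f,\nabla f\rangle-\Bigl(q_r-\tfrac12\dive(P_rX_r)\Bigr)f^2\,d\mu_\Sigma+\int_{\partial\Sigma}|P_r\nu|\Bigl(\alpha_\theta-\tfrac12\langle X_r,\nu\rangle\Bigr)f^2\,d\mu_{\partial\Sigma},
\end{equation*}
where $f=\langle\xi,\eta\rangle$ is the support function of the variation. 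Comparing with Definition \ref{011.2} and the definition \eqref{0075} of $Q_r$, the two expressions differ exactly by $\int_\Sigma\bigl[(q_r-\tfrac12\dive(P_rX_r))-Q_r\bigr]f^2=\tfrac14\int_\Sigma|\sqrt{P_r}X_r|^2f^2$, so that, for the normal variation $\Sigma_t$ with variational field $f\eta$,
\begin{equation*}
\mathcal{I}_{r,\theta}^{S}(f,f)=\left.\frac{\partial}{\partial t}\mathcal{F}_{r,\theta}[\Sigma_t]\right|_{t=0}+\frac14\int_\Sigma|\sqrt{P_r}X_r|^2f^2\,d\mu_\Sigma,\qquad f\in C^\infty(\Sigma),
\end{equation*}
an identity valid for every smooth $f$, with no boundary condition imposed.

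Next I would invoke the hypothesis. Given $f\in C^\infty(\Sigma)$ with $\int_\Sigma f\,d\mu_\Sigma=0$, there is an admissible volume-preserving normal variation with variational field $f\eta$ (as recalled before Theorem \ref{004.2}), so $r$-stability gives $\left.\frac{\partial}{\partial t}\mathcal{F}_{r,\theta}[\Sigma_t]\right|_{t=0}\geq 0$; since $P_r$ is positive definite, $\sqrt{P_r}$ is well defined and $|\sqrt{P_r}X_r|^2\geq 0$, so the displayed identity yields $\mathcal{I}_{r,\theta}^{S}(f,f)\geq 0$ for every such $f$. Finally, $\Sigma$ being compact, all coefficients in $\mathcal{I}_{r,\theta}^{S}$ are bounded, so this quadratic form is continuous on $H^1(\Sigma)$; as smooth functions of zero mean are dense in $\mathcal{F}=\{f\in H^1(\Sigma):\int_\Sigma f\,d\mu_\Sigma=0\}$, the inequality $\mathcal{I}_{r,\theta}^{S}(f,f)\geq 0$ passes to all $f\in\mathcal{F}$, which is exactly symmetric $r$-stability.

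The only delicate step is the integration by parts: one must track the boundary contributions carefully and apply Lemma \ref{002.4} at the right moment to replace $P_r\nu$ by $|P_r\nu|\nu$, so that the two $\frac{\partial f}{\partial\nu}$ boundary terms cancel and the surviving boundary term reorganizes into $\alpha_\theta-\tfrac12\langle X_r,\nu\rangle$; the interior part is then just the bookkeeping that converts the $\dive(P_rX_r)$ term of \eqref{0075} into the extra non-negative density $\tfrac14|\sqrt{P_r}X_r|^2$. Everything else is formal.
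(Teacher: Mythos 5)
Your argument is correct, and it is genuinely different from the paper's. The paper obtains Corollary \ref{011.4} as a consequence of Proposition \ref{011.1}, whose proof runs through the non-symmetric operator: it takes the positive function $f_0$ with $T_rf_0\geq 0$ and $\frac{\partial f_0}{\partial\nu}+\alpha_\theta f_0=0$ furnished by the characterization of strong $r$-stability (Proposition \ref{004.4}), substitutes $\widetilde f=\log f_0$, completes the square with $\frac{X_r}{2}$, and uses Cauchy--Schwarz together with the arithmetic--geometric inequality to conclude $\mathcal{I}^S_{r,\theta}(f,f)\geq 0$ for every $f$. You instead prove the exact identity
\begin{equation*}
\mathcal{I}_{r,\theta}^{S}(f,f)=-\int_\Sigma f\left(L_rf+q_rf\right)d\mu_\Sigma+\int_{\partial\Sigma}\left\vert P_r\nu\right\vert f\left(\frac{\partial f}{\partial\nu}+\alpha_\theta f\right)d\mu_{\partial\Sigma}+\frac14\int_\Sigma\left\vert\sqrt{P_r}X_r\right\vert^2 f^2\,d\mu_\Sigma,
\end{equation*}
by two integrations by parts, which is legitimate: your boundary bookkeeping is right, and the step $\left\langle P_rX_r,\nu\right\rangle=\left\vert P_r\nu\right\vert\left\langle X_r,\nu\right\rangle$ (and likewise $\left\langle P_r\nabla f,\nu\right\rangle=\left\vert P_r\nu\right\vert\frac{\partial f}{\partial\nu}$) is exactly where Lemma \ref{002.4} and positive definiteness of $P_r$ enter, both available under the standing hypothesis that $\partial\Omega$ is totally umbilical. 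Your route buys two things: it needs no positive supersolution and no principal-eigenvalue theory, and because the comparison is function-by-function it applies directly to the mean-zero (volume-preserving) class, so it proves the corollary exactly as stated ($r$-stable $\Rightarrow$ symmetric $r$-stable), whereas the paper's Proposition \ref{011.1} is formulated under strong $r$-stability; as a bonus your identity also recovers the strong statement (strongly $r$-stable $\Rightarrow$ strongly symmetric $r$-stable) with less machinery. What the paper's approach buys in exchange is a scheme that does not depend on writing the second variation in divergence form for a specific test function, mirroring the standard symmetrization argument for MOTS-type non-self-adjoint stability operators, and it packages the conclusion as positivity of $\mathcal{I}^S_{r,\theta}$ on all of $H^1(\Sigma)$. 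Your closing density/continuity remark for passing from smooth mean-zero functions to $\mathcal{F}$ is fine since $\Sigma$ is compact and the coefficients are smooth.
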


As well as the bilinear form given in \eqref{0010}, the bilinear form \eqref{0052} is also associated to a differential operator.
\begin{defn}\label{011.5}
The \textbf{symmetric $r$-stability operator} for a positive definite $H_{r+1}$-hypersurface $\varphi : \Sigma^n \rightarrow M$ is given by
\begin{equation}\label{0077}
T_r^S=-\dive\left(P_r\nabla\cdot\right)-Q_r.
\end{equation}
\end{defn}

\section{Stability of cylinders in $M \times \mathbb{R}$}\label{sete}
We start this section organizing the most relevant definitions of the symmetric stability theory for $H_{r+1}$-hypersurfaces, based on the approach of the present paper (free boundary) and of \cite{elbert2019note} (fixed boundary or empty boundary).

A capillary positive definite $H_{r+1}$-hypersurface $\varphi : \Sigma^n \rightarrow \Omega \subseteq M$ supported on $\partial\Omega$ is
\begin{enumerate}[(i)]
\item symmetric $r$-stable if $\mathcal{I}_{r,\theta}^S(f,f) \geq 0$ for all $f \in \left\{f \in H^1(\Sigma)\,|\,\int_\Sigma f\,d\mu_\Sigma=0\right\}$;
\item strongly symmetric $r$-stable if $\mathcal{I}_{r,\theta}^S(f,f) \geq 0$ for all $f \in H^1(\Sigma)$.
\end{enumerate}

For a positive definite $H_{r+1}$-hypersurface with closed or fixed boundary $\varphi : \Sigma^n \rightarrow M$ the index form that replaces \eqref{0052} is $I_r^S : H_0^1(\Sigma) \times H_0^1(\Sigma) \rightarrow \mathbb{R}$, where
\begin{equation}\label{0076}
I_r^S(f_1,f_2)=\int_\Sigma \left<P_r\nabla f_1,\nabla f_2\right>-Q_rf_1f_2\,d\mu_\Sigma,
\end{equation}
and we say that $\varphi : \Sigma^n \rightarrow M$ is
\begin{enumerate}[(i)]
\item symmetric $r$-stable if $I_r^S(f,f) \geq 0$ for all $f \in \left\{f \in H_0^1(\Sigma)\,|\,\int_\Sigma f\,d\mu_\Sigma=0\right\}$;
\item strongly symmetric $r$-stable if $I_r^S(f,f) \geq 0$ for all $f \in H_0^1(\Sigma)$,
\end{enumerate}
where $H_0^1(\Sigma)$ is the closure of $C_0^\infty(\Sigma)$ with respect to the norm $\left\Vert\cdot\right\Vert_{H^1(\Sigma)}$.
 
Associated to the index form, we set $(T_r^S,B)$ to denote the symmetric $r$-stability operator \eqref{0077} with the boundary condition $B(f)=0$ on $\partial\Sigma$, where
\begin{equation}\label{0078}
B(f)=\begin{dcases*}
\frac{\partial f}{\partial\nu}+\alpha_\theta^S f,& for capillary (Robin condition)\\
f,& for closed or fixed boundary (Dirichlet condition)
\end{dcases*},
\end{equation}
where $\alpha_\theta^S=\alpha_\theta-\frac{\left<X_r,\nu\right>}{2} \in C^\infty(\partial\Sigma)$. For each boundary condition $B(f)=0$, we can consider an eigenvalue problem which is related to the corresponding stability problem.

Now we particularize to the eigenvalue problem with Dirichlet boundary condition, namely, 
\begin{equation}\label{0056}
\begin{dcases*}
T^S_rf=-\dive\left(P_r\nabla f\right)-Q_rf=\lambda f,& in $\Sigma$ \\
f=0,& on $\partial\Sigma$
\end{dcases*},
\end{equation}
with $f \in H_0^1(\Sigma) \backslash \{0\}$. Let $\lambda_1<\lambda_2$ be the first and the second eigenvalues of this problem. For $l \in \{1,2\}$, let $E_{\lambda_l}$ be the eigenspace of $H_0^1(\Sigma)$ associated to $\lambda_l$ and $$E_{\lambda_l}^\perp=\left\{f \in H_0^1(\Sigma)\,|\,\left<f,g\right>_{L^2(\Sigma)}=0 \text{ for any }g \in E_{\lambda_l}\right\}.$$ Then
\begin{equation}\label{0057}
\lambda_1=I_r^S(f_1,f_1)=\min\left\{I_r^S(f,f)\,\left|\right.\,f \in H_0^1(\Sigma)\text{ and }\int_\Sigma f^2\,d\mu_\Sigma=1\right\}
\end{equation}
and
\begin{equation}\label{0058}
\lambda_2=I_r^S(f_2,f_2)=\min\left\{I_r^S(f,f)\,\left|\right.\,f \in H_0^1(\Sigma) \cap E_{\lambda_1}^\perp \text{ and }\int_\Sigma f^2\,d\mu_\Sigma=1\right\},
\end{equation}
where $f_1$ and $f_2$ are elements of an orthonormal basis $\{f_l\}_{l \in \mathbb{N}}$ for $L^2(\Sigma)$ composed by eigenfunctions of \eqref{0056}.

The next lemma, which is a generalization of a result proven by M. Koiso in \cite[Theorem 1.3]{koiso2002deformation} and was used by R. Souam in \cite[Theorem 3.1]{souam2021stable}, gives criteria for the symmetric $r$-stability for closed hypersurfaces. The proof is essentially the same of that in \cite{koiso2002deformation} and we include it on Appendix C for completeness.

\begin{lema}\label{012.1}
Let $\varphi : \Sigma \rightarrow M$ be a closed $H_{r+1}$-hypersurface. The following hold:
\begin{enumerate}[(i)]
\item $\lambda_1 \geq 0$ if and only if $\Sigma$ is strongly symmetric $r$-stable.
\item If $\lambda_1<0<\lambda_2$, then there exists a unique function $f \in H_0^1(\Sigma)$ such that $T_r^Sf=-1$, and $\Sigma$ is symmetric $r$-stable if and only if $\int_\Sigma f\,d\mu_\Sigma \geq 0$.
\item If $\lambda_1<0=\lambda_2$ and there exists $f \in E_{\lambda_2}$ satisfying $\int_\Sigma f\,d\mu_\Sigma \neq 0$, then $\Sigma$ is symmetric $r$-unstable.
\item If $\lambda_1<0=\lambda_2$ and $\int_\Sigma f\,d\mu_\Sigma=0$ for any $f \in E_{\lambda_2}$ then there exists a unique $f \in H_0^1(\Sigma) \cap E_{\lambda_2}^\perp$ such that $T_r^Sf=-1$ and $\Sigma$ is symmetric $r$-stable if and only if $\int_\Sigma f\,d\mu_\Sigma \geq 0$.
\item if $\lambda_2<0$, then $\Sigma$ is symmetric $r$-unstable.
\end{enumerate}
\end{lema}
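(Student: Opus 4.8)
\textbf{Proof plan for Lemma \ref{012.1}.}

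The plan is to exploit that $T_r^S$ is a self-adjoint, second-order elliptic operator in divergence form (since $P_r$ is positive definite), so that spectral theory applies: there is a discrete spectrum $\lambda_1 < \lambda_2 \leq \cdots$ with an $L^2(\Sigma)$-orthonormal basis of eigenfunctions $\{f_l\}$, the first eigenfunction $f_1$ can be taken positive, and the variational characterizations \eqref{0057}--\eqref{0058} hold. The whole argument is then a bookkeeping exercise expanding an arbitrary $g \in H_0^1(\Sigma)$ (with the appropriate mean-zero constraint) in the eigenbasis, $g = \sum_l c_l f_l$, so that $I_r^S(g,g) = \sum_l \lambda_l c_l^2$, and comparing this quantity with the mean-value constraint $\int_\Sigma g\,d\mu_\Sigma = \sum_l c_l \big(\int_\Sigma f_l\,d\mu_\Sigma\big)$.

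For part (i), strong symmetric $r$-stability means $I_r^S(g,g) \geq 0$ for \emph{all} $g \in H_0^1(\Sigma)$; by \eqref{0057} this is exactly $\lambda_1 \geq 0$. For parts (ii) and (iv), when $\lambda_1 < 0$ I would first produce the solution $f$ of $T_r^S f = -1$: in case (ii) the operator $T_r^S$ is invertible on $H_0^1(\Sigma)$ because $0$ is not an eigenvalue (as $\lambda_1 < 0 < \lambda_2$), giving existence and uniqueness of $f$; in case (iv) one instead inverts $T_r^S$ on the invariant subspace $E_{\lambda_2}^\perp$, where it has trivial kernel, and uses the hypothesis $\int_\Sigma h\,d\mu_\Sigma = 0$ for all $h \in E_{\lambda_2}$ to check the solvability condition (the constant function $1$ is $L^2$-orthogonal to $E_{\lambda_2}$). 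The sign criterion $\int_\Sigma f\,d\mu_\Sigma \geq 0 \Leftrightarrow$ symmetric $r$-stable comes from a Lagrange-multiplier / completion-of-squares argument: minimizing $I_r^S(g,g)$ over the codimension-one constraint set $\{\int_\Sigma g\,d\mu_\Sigma = 0\}$ (intersected with $E_{\lambda_2}^\perp$ in case (iv)), the minimizer $g_0$ satisfies $T_r^S g_0 = \mu\cdot 1$ for some multiplier $\mu$, hence $g_0 = -\mu f$, and plugging back gives $I_r^S(g_0,g_0) = -\mu^2 \int_\Sigma f\,d\mu_\Sigma$ up to a positive factor, whence the minimum is $\geq 0$ precisely when $\int_\Sigma f\,d\mu_\Sigma \geq 0$. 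One must also separately check that if $\int_\Sigma f\,d\mu_\Sigma < 0$ then $g_0 = -\mu f$ genuinely lies in the constraint set and yields a negative value (so instability really occurs), and conversely that every admissible $g$ has $I_r^S(g,g) \geq 0$ when the sign is right.

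For part (iii): if $\lambda_2 = 0$ and some $h \in E_{\lambda_2}$ has $\int_\Sigma h\,d\mu_\Sigma \neq 0$, I would take an eigenfunction $h_1 \in E_{\lambda_1}$ with $I_r^S(h_1,h_1) = \lambda_1 \int_\Sigma h_1^2 < 0$, and build a test function $g = h_1 + t h$ with $t$ chosen so that $\int_\Sigma g\,d\mu_\Sigma = 0$ (possible since $\int_\Sigma h\,d\mu_\Sigma \neq 0$); then by orthogonality of eigenfunctions $I_r^S(g,g) = \lambda_1 \|h_1\|^2 + t^2\lambda_2\|h\|^2 = \lambda_1\|h_1\|^2 < 0$, exhibiting instability. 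Part (v): if $\lambda_2 < 0$, the two-dimensional span of $f_1, f_2$ already contains, by the same $\int_\Sigma g\,d\mu_\Sigma = 0$ adjustment, a nonzero admissible $g$ with $I_r^S(g,g) = \lambda_1 c_1^2 + \lambda_2 c_2^2 < 0$. The main obstacle I anticipate is not any single step but the careful case analysis of solvability and of where the constrained minimizer lives — in particular, verifying in case (iv) that $1 \perp E_{\lambda_2}$ so that $T_r^S f = -1$ is solvable in $E_{\lambda_2}^\perp$, and keeping the Lagrange-multiplier computation honest about signs (the factor $\int_\Sigma f\,d\mu_\Sigma$ vs. $-\int_\Sigma f\,d\mu_\Sigma$) since the index form carries an overall minus sign in its curvature terms. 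Everything else follows the template of Koiso's argument in \cite{koiso2002deformation}, with $P_r$-weighted inner products replacing the flat ones throughout.
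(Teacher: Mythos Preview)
Your plan is correct in outline and, for items (i), (iii) and (v), essentially identical to the paper's argument: the paper also takes $g=af_1+f_2$ with $a$ chosen so that $\int_\Sigma g\,d\mu_\Sigma=0$ and reads off $I_r^S(g,g)=a^2\lambda_1+\lambda_2$, which handles (iii) and (v) simultaneously.

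For (ii) and (iv) your Lagrange-multiplier route is slightly miscast. Minimising $I_r^S(g,g)$ over the affine constraint $\int_\Sigma g\,d\mu_\Sigma=0$ alone has no minimiser (the infimum is $-\infty$ or achieved only at $0$); with the extra normalisation $\|g\|_{L^2}=1$ the Euler--Lagrange equation is $T_r^S g_0=\lambda g_0+\mu$, not $T_r^S g_0=\mu$, so the identification $g_0=-\mu f$ does not follow directly. The paper sidesteps this by a concrete decomposition rather than a variational characterisation: for instability when $\int_\Sigma f\,d\mu_\Sigma<0$ it tests with $\tilde f=af_1+f$ (where $T_r^S f=-1$ and $a$ enforces mean zero) and computes $I_r^S(\tilde f,\tilde f)=a^2\lambda_1+\int_\Sigma f\,d\mu_\Sigma<0$; for stability when $\int_\Sigma f\,d\mu_\Sigma\ge 0$ it first checks $f\notin E_{\lambda_1}^\perp$ (otherwise \eqref{0058} would give $0\ge I_r^S(f,f)/\|f\|^2\ge\lambda_2>0$), so that every $\bar f\in\mathcal F$ splits uniquely as $\bar f=bf+\hat f$ with $\hat f\in E_{\lambda_1}^\perp$, and then expands $I_r^S(\bar f,\bar f)\ge b^2\int_\Sigma f\,d\mu_\Sigma+\lambda_2\|\hat f\|^2\ge 0$. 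Part (iv) is the same with $E_{\lambda_2}^\perp$ in place of $H_0^1(\Sigma)$, and your observation that $1\perp E_{\lambda_2}$ is exactly the solvability condition invoked there. So the substance of your plan is right; just replace the minimiser heuristic in (ii)/(iv) by this explicit two-term decomposition and the signs take care of themselves.
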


 The following result is inspired on \cite[Theorem 3.3]{souam2021stable}.

\begin{teo}\label{012.2}
Let $\varphi_0 : \Sigma_0 \rightarrow M$ be a closed oriented and positive definite $H_{r+1}^{(0)}$-hypersurface and let $l>0$. The map $\widetilde\varphi:=\varphi_0 \times \id_{[0,l]} : \Sigma:=\Sigma_0 \times [0,l] \rightarrow M \times \mathbb{R}$ is a positive definite free boundary hypersurface with $(r+1)$-th order mean curvature equal to $\frac{n-r-1}{n}H_{r+1}^{(0)}$. If $\lambda_1^{(0)}$ is the first eigenvalue of $T_r^S$ on $\Sigma_0$. Then we have

\begin{enumerate}[(i)]
\item If $\Sigma_0$ is symmetric $r$-unstable then $\Sigma$ is symmetric $r$-unstable.
\item Suppose that $\Sigma_0$ is symmetric $r$-stable.
\begin{description}
\item[a)] Assume, in addition, that $\Sigma$ is symmetric $r$-stable. Then $\lambda_1^{(0)}+S_r^{(0)}\frac{\pi^2}{l^2}\geq 0$
\item[b)] Assume, in addition, that $S_r^{(0)}$ constant and that $\lambda_1^{(0)}+S_r^{(0)}\frac{\pi^2}{l^2} \geq 0$. Then $\Sigma$ is symmetric $r$-stable.
\end{description}
\end{enumerate}
\end{teo}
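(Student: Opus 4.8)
The strategy is to compute the symmetric $r$-index form $\mathcal{I}^S_{r,\pi/2}$ for the cylinder $\Sigma = \Sigma_0 \times [0,l]$ explicitly, reducing it, via separation of variables, to the one-dimensional problem on $[0,l]$ (with Neumann conditions at the endpoints, coming from the free boundary condition) tensored with the eigenvalue problem for $T_r^S$ on $\Sigma_0$. First I would set up coordinates: write a point of $\Sigma$ as $(p,s)$ with $p \in \Sigma_0$, $s \in [0,l]$, and note that the unit normal $\eta$ of $\widetilde\varphi$ is just the (horizontal lift of the) unit normal $\eta_0$ of $\varphi_0$, so the shape operator $A$ of $\Sigma$ in $M \times \mathbb{R}$ is $A_0 \oplus 0$, where the extra $0$ is the $\partial_s$-direction. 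From this one reads off $S_{r+1} = \frac{n-r-1}{n}\,\text{(normalization)}\cdot S_{r+1}^{(0)}$ as claimed, that $\partial_s$ is a principal direction with principal curvature $0$, and — crucially — that $P_r$ on $\Sigma$ acts as $(P_r)_0$ on the $T\Sigma_0$ block and as $S_r^{(0)}$ on the $\partial_s$ block (using $P_r e_i = S_r(A_i) e_i$ and the fact that omitting the zero curvature gives $S_r(A_{\partial_s}) = S_r^{(0)}$). In particular $P_r$ is positive definite on $\Sigma$, so $\widetilde\varphi$ is positive definite. Since the boundary $\partial\Sigma = \Sigma_0 \times \{0,l\}$ has $\overline\eta$ pointing in the $\pm\partial_s$ direction and $\iota_{\partial\Omega}$ totally geodesic (indeed $\partial\Omega = M \times \{0\} \cup M \times \{l\}$), the free boundary ($\theta = \pi/2$) normal conormal $\nu = \pm\partial_s$ is a principal direction, and the boundary term $\alpha_\theta^S$ and $\langle X_r,\nu\rangle$ both vanish because $\nu = \pm\partial_s$ is a null direction of $A$ and $X_r$ is horizontal (since $\dive P_r$ on the product is horizontal). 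Hence $\mathcal{I}^S_{r,\pi/2}$ reduces to a pure bulk integral with Neumann-type flexibility at $s = 0,l$.

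Next I would compute $Q_r$ on $\Sigma$. Because the ambient is $M \times \mathbb{R}$ and $\partial_s$ is parallel and flat, $\overline{R}_\eta(\partial_s) = 0$ and $\overline{R}_\eta$ restricted to $T\Sigma_0$ agrees with $(\overline{R}_{\eta_0})$ of the $M$-factor; combined with $A = A_0 \oplus 0$ we get $\tr(P_r(A^2 + \overline{R}_\eta)) = \tr((P_r)_0(A_0^2 + (\overline{R}_{\eta_0})_0))$, i.e.\ $q_r$ on $\Sigma$ equals $q_r^{(0)}$ on $\Sigma_0$ (viewed as $s$-independent). Similarly $X_r$ and $\dive(P_r X_r)$ pull back from $\Sigma_0$, so $Q_r$ on $\Sigma$ is just $Q_r^{(0)}$ composed with the projection $\Sigma \to \Sigma_0$. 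Therefore, for $f = f(p,s)$, since $P_r\nabla f = (P_r)_0\nabla_{\Sigma_0} f \oplus S_r^{(0)} \partial_s f$,
\begin{equation*}
\mathcal{I}^S_{r,\pi/2}(f,f) = \int_0^l \left( I_r^{S,(0)}(f(\cdot,s),f(\cdot,s)) + \int_{\Sigma_0} S_r^{(0)} (\partial_s f)^2\, d\mu_{\Sigma_0}\right) ds.
\end{equation*}

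Now the analysis is standard Sturm--Liouville in the $s$-variable. For (i): if $\Sigma_0$ is symmetric $r$-unstable there is $g \in H^1_0(\Sigma_0)$ with $\int_{\Sigma_0} g = 0$ and $I_r^{S,(0)}(g,g) < 0$; taking $f(p,s) = g(p)$ (constant in $s$) gives $\int_\Sigma f = 0$ and $\mathcal{I}^S_{r,\pi/2}(f,f) = l\, I_r^{S,(0)}(g,g) < 0$, so $\Sigma$ is symmetric $r$-unstable. For (ii)(a): assume $\Sigma$ symmetric $r$-stable. Let $f_1^{(0)} > 0$ be a first eigenfunction of $T_r^S$ on $\Sigma_0$ with eigenvalue $\lambda_1^{(0)}$ (using the closed case; note $f_1^{(0)}$ need not integrate to zero). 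Take $f(p,s) = f_1^{(0)}(p)\cos(\pi s/l)$. Then $\int_0^l \cos(\pi s/l)\,ds = 0$ forces $\int_\Sigma f\,d\mu_\Sigma = 0$, so $f$ is admissible. A direct computation gives $I_r^{S,(0)}(f(\cdot,s),f(\cdot,s)) = \lambda_1^{(0)}\cos^2(\pi s/l)\int_{\Sigma_0}(f_1^{(0)})^2$ and $\int_{\Sigma_0} S_r^{(0)}(\partial_s f)^2 d\mu_{\Sigma_0} = (\pi/l)^2\sin^2(\pi s/l)\int_{\Sigma_0} S_r^{(0)}(f_1^{(0)})^2$; here $S_r^{(0)}$ is constant (this is where the hypothesis is really used — without it one must be more careful, but in (a) we can still pass $S_r^{(0)}$ inside using the Rayleigh characterization if needed), so integrating in $s$ and using $\int_0^l \cos^2 = \int_0^l \sin^2 = l/2$ yields $\mathcal{I}^S_{r,\pi/2}(f,f) = \frac{l}{2}\left(\lambda_1^{(0)} + S_r^{(0)}\pi^2/l^2\right)\int_{\Sigma_0}(f_1^{(0)})^2 \geq 0$, giving the inequality. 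For (ii)(b): conversely, assume $S_r^{(0)}$ constant and $\lambda_1^{(0)} + S_r^{(0)}\pi^2/l^2 \geq 0$, and take any admissible $f$ on $\Sigma$ (i.e.\ $\int_\Sigma f = 0$). Expand $f(p,s) = \sum_{j\geq 0} a_j(p)\cos(j\pi s/l)$ in the Neumann cosine basis on $[0,l]$ (this is the correct basis because the free boundary condition at $s=0,l$ allows arbitrary boundary values but the quadratic form has no boundary term); Parseval gives $\mathcal{I}^S_{r,\pi/2}(f,f) = \frac{l}{2}\sum_{j\geq 0}\left(I_r^{S,(0)}(a_j,a_j) + S_r^{(0)}(j\pi/l)^2\|a_j\|^2_{L^2(\Sigma_0)}\right)$ (with the $j=0$ term getting a full $l$ rather than $l/2$, a harmless constant). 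The $j=0$ term needs $\int_{\Sigma_0} a_0\,d\mu_{\Sigma_0} = 0$: indeed $0 = \int_\Sigma f = l\int_{\Sigma_0} a_0$, so symmetric $r$-stability of $\Sigma_0$ gives $I_r^{S,(0)}(a_0,a_0) \geq 0$. For $j \geq 1$ the term is $\geq \left(\lambda_1^{(0)} + S_r^{(0)}(j\pi/l)^2\right)\|a_j\|^2 \geq \left(\lambda_1^{(0)} + S_r^{(0)}(\pi/l)^2\right)\|a_j\|^2 \geq 0$ by the hypothesis (using $S_r^{(0)} > 0$, which holds since $P_r$ positive definite forces $S_r^{(0)} = \tr(P_r)_0/(n-r) > 0$, so $j\geq 1 \Rightarrow (j\pi/l)^2 \geq (\pi/l)^2$ helps in the right direction). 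Summing, $\mathcal{I}^S_{r,\pi/2}(f,f) \geq 0$, so $\Sigma$ is symmetric $r$-stable.

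\textbf{Main obstacle.} The delicate point is justifying the separation-of-variables / Parseval step rigorously for general $f \in H^1(\Sigma)$ rather than smooth $f$ — one must check that the cosine expansion in $s$ converges in $H^1$ and that the index form passes to the limit term by term, and that the admissibility constraint $\int_\Sigma f = 0$ translates exactly to the constraint on the zeroth mode $a_0$ without interfering with the higher modes. A second subtlety is that the first eigenfunction $f_1^{(0)}$ of $T_r^S$ on the closed manifold $\Sigma_0$ does not lie in $\mathcal{F}$, so in part (a) one must be careful that the test function $f_1^{(0)}(p)\cos(\pi s/l)$ is nonetheless admissible on $\Sigma$ — which it is, precisely because multiplying by $\cos(\pi s/l)$ kills the integral. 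One should also double-check the orientation/sign conventions making $\nu = \pm\partial_s$ and confirming that $\langle X_r,\nu\rangle = 0$ on $\partial\Sigma$ and $\alpha_{\pi/2}^S = 0$ (using that $\Pi = M \times \{0,l\}$ is totally geodesic in $M \times \mathbb{R}$ and that $\partial_s$ is a null principal direction), so that no boundary terms survive.
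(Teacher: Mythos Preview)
Your geometric setup and your arguments for (i) and (ii)(a) match the paper's proof essentially line for line: the paper also shows $P_r$ splits as $(P_r)_0 \oplus S_r^{(0)}$ on $T\Sigma_0 \oplus \mathbb{R}\partial_t$, that $X_r$ is horizontal so the boundary term vanishes, that $Q_r = Q_r^{(0)}\circ\pi_M$, and for (ii)(a) it uses the same test function $f_1^{(0)}(p)\cos(\pi t/l)$ (your remark that the integral in $t$ kills the mean so this is admissible on $\Sigma$ even though $f_1^{(0)}\notin\mathcal{F}_{\Sigma_0}$ is exactly the point). One small comment: in (a) the statement does \emph{not} assume $S_r^{(0)}$ constant, so your parenthetical ``this is where the hypothesis is really used'' is misplaced; but the paper's own proof of (a) is equally loose on this point (the contrapositive it proves is ``if $\lambda_1^{(0)}+S_r^{(0)}\pi^2/l^2<0$ pointwise then $\Sigma$ is unstable''), so this is a shared ambiguity rather than an error in your argument.

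The genuine divergence is in (ii)(b). You expand $f$ in the Neumann cosine basis $\{\cos(j\pi s/l)\}_{j\geq 0}$ on $[0,l]$, use orthogonality to diagonalize $\mathcal{I}^S_{r,\pi/2}$, handle the $j=0$ mode via the constraint $\int_{\Sigma_0}a_0=0$ and the symmetric $r$-stability of $\Sigma_0$, and bound the $j\geq 1$ modes below by $\bigl(\lambda_1^{(0)}+S_r^{(0)}(j\pi/l)^2\bigr)\|a_j\|^2\geq 0$ using the Rayleigh characterization of $\lambda_1^{(0)}$ together with $S_r^{(0)}>0$. This is correct and entirely self-contained. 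The paper takes a different route: it replaces $[0,l]$ by the circle $\mathbb{S}^1(l/\pi)$ to obtain a \emph{closed} hypersurface $\widehat\Sigma=\Sigma_0\times\mathbb{S}^1(l/\pi)$, computes the full spectrum of $\widehat T_r^S$ as $\{\lambda_k^{(0)}+S_r^{(0)}m^2\pi^2/l^2\}$, invokes Koiso's criterion (Lemma~\ref{012.1}, items (ii) and (iv), with the explicit solution of $T_r^Sf=-1$ pulled back from $\Sigma_0$) to conclude that $\widehat\Sigma$ is symmetric $r$-stable, and finally transfers stability from $\widehat\Sigma$ to $\Sigma$ by the even reflection $f(p,t)\mapsto f(p,2l-t)$. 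Your approach is more elementary and avoids both Lemma~\ref{012.1} and the doubling trick; the paper's approach has the virtue of reducing everything to the closed case and thereby reusing the Koiso machinery already set up. The obstacles you flag (Parseval in $H^1$, admissibility) are real but routine.
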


\begin{proof}
 Denote with a superscript $^{(0)}$ the quantities related to $\Sigma_0$ and by $t$ the global coordinate of $\mathbb{R}$. If $\kappa_1^{(0)},...,\kappa_{n-1}^{(0)}$ denotes the principal curvatures of $\Sigma_0$ associated with the eigenvectors $\{e_1^{(0)},...,e_{n-1}^{(0)}\}$ and $\pi_M$ is the projection of $M \times \mathbb{R}$ onto $M$, then the principal curvatures $\kappa_1,...,\kappa_n$ of $\Sigma$ are equal to $$\kappa_i=\begin{dcases*}\kappa_i^{(0)} \circ \pi_M,& $i \neq n$ \\ 0,& $i=n$\end{dcases*}$$ and the associated eigenvectors are $\left\{e_1,..,e_{n-1},e_n=\left.\frac{\partial}{\partial t}\right\vert_\Sigma\right\}$, where $e_i \in \Gamma(T\Sigma)$ is the horizontal vector field such that $e_i^{(0)}=d\pi_Me_i$. The eigenvalues $S_r(A_i)$ of the Newton transformation $P_r$ of $\Sigma$ are equal to $S_r^{(0)} \circ \pi_M$ if $i=n$ and
\begin{eqnarray*}
S_r(A_i) &=& \sum_{1 \leq i_1<...<i_r \leq n; i_1,...,i_r \neq i} \kappa_{i_1}\cdot ... \cdot \kappa_{i_r} \\
&=& \kappa_n\sum_{1 \leq i_1<...<i_{r-1} \leq n-1; i_1,...,i_{r-1} \neq i} \kappa_{i_1}\cdot ... \cdot \kappa_{i_{r-1}}+\sum_{1 \leq i_1<...<i_r \leq n-1; i_1,...,i_r \neq i}\kappa_{i_1}\cdot ... \cdot \kappa_{i_r} \\
&=& S_r^{(0)}(A_i^{(0)}) \circ \pi_M, \quad i \neq n,
\end{eqnarray*}
proving that $\Sigma$ is positive definite provided $\Sigma_0$ is also positive definite. Also, its $(r+1)$-th order mean curvature is equal to
\begin{eqnarray*}
S_{r+1} &=& \sum_{1 \leq i_1<...<i_{r+1} \leq n} \kappa_{i_1}\cdot ... \cdot \kappa_{i_r} \\
&=& \kappa_n\sum_{1 \leq i_1<...<i_r \leq n-1}\kappa_{i_1}\cdot ... \cdot \kappa_{i_r}+\sum_{1 \leq i_1<...<i_{r+1} \leq n-1} \kappa_{i_1}\cdot ... \cdot \kappa_{i_r}=S_{r+1}^{(0)},
\end{eqnarray*}
proving that $$H_{r+1}=\binom{n}{r+1}^{-1}S_{r+1}=\binom{n}{r+1}^{-1}S_{r+1}^{(0)}=\dfrac{\binom{n-1}{r+1}}{\binom{n}{r+1}}H_{r+1}^{(0)}=\frac{n-r-1}{n}H_{r+1}^{(0)}.$$

Let $\{u_1,..,u_{n-1}, u_n=\frac{\partial}{\partial t}\}$ be a geodesic frame of $\Sigma$ centered at some point $p=(p_0,t) \in \Sigma$. Then we have, $$\dive_\Sigma P_r=\sum_{i=1}^n \nabla_{u_i}(P_r(u_i))=\sum_{i=1}^{n-1}\nabla_{u_i}(P_r(u_i)),$$ proving that $\dive_\Sigma P_r$ and, {\it a fortiori}, $X_r$ are horizontal vector fields whose projections onto $T\Sigma_0$ are equal to $\dive_{\Sigma_0}P_r^{(0)}$ and $X_r^{(0)}$, respectively. Hence, $\left\vert\sqrt{P_r}X_r\right\vert=\left\vert\sqrt{P_r^{(0)}}X_r^{(0)}\right\vert_0$, and that $\dive_\Sigma\left(P_rX_r\right)$ is a horizontal vector field whose projection onto $T\Sigma_0$ is equal to $\dive_{\Sigma_0}\left(P_r^{(0)}X_r^{(0)}\right)$. Also, since $\nu=\begin{dcases*}-e_n,& $t=0$ \\ e_n,& $t=l$\end{dcases*}$ on $\partial\Sigma=\Sigma_0 \times \{0,l\}$, we have that $\left<X_r,\nu\right>=0$ on $\partial\Sigma$. Also since, $\tr_\Sigma\left(P_rA^2\right)=\tr_{\Sigma_0}\left(\left(P_rA^2\right)^{(0)}\right)\circ\pi_M$ on $\Sigma$ and the unit normal vector field $\eta \in \Gamma(N\Sigma)$ is the horizontal vector field of $M \times \mathbb{R}$ whose projection onto $TM$ is the unit normal vector field $\eta^{(0)}$ of $\Sigma_0$ and $\overline{R}(\eta,\partial_t)\eta=0$, we have that $\tr_\Sigma\left(P_r\overline{R}_\eta\right)=\tr_{\Sigma_0}\left(P_r^{(0)}\overline{R}_{\eta^{(0)}}^{(0)}\right) \circ \pi_M$, where $\overline{R}^{(0)}$ is the curvature tensor of $M$. Hence, $Q_r=Q_r^{(0)} \circ \pi_M$ on $\Sigma$.

Suppose that $\Sigma_0$ is a symmetric $r$-unstable hypersurface. Let $f_0 \in \mathcal{F}_{\Sigma_0}$ be such that $I_r^S(f_0,f_0)<0$ then, for $\tilde{f}=f_0 \circ \pi_M \in \mathcal{F}_\Sigma$, we obtain that $\mathcal{I}_r^S(\tilde{f},\tilde{f})=I_r^S(f_0,f_0)<0$. Thus $\Sigma$ is symmetric $r$-unstable.

Now suppose that $\Sigma_0$ is symmetric $r$-stable and assume that $\lambda_1^{(0)}+S_r^{(0)}\frac{\pi^2}{l^2}<0$. Then if $f_0 \in H^1(\Sigma_0)$ is an eigenfunction of $T_r^S$ associated to $\lambda_1^{(0)}$, we have for $\tilde{f}(p)=f_0(p_0)\cdot\cos\left(\frac{\pi t}{l}\right) \in H^1(\Sigma)$ $$\int_\Sigma \tilde{f}\,d\mu_\Sigma=\int_0^l\left(\int_{\Sigma_0}f_0\,d\mu_{\Sigma_0}\right)\cos\frac{\pi t}{l}\,dt=0,$$ showing that $\tilde{f} \in \mathcal{F}_\Sigma$. Moreover,
\begin{eqnarray*}
T_r^S\tilde{f} &=& -\dive_\Sigma\left(P_r\nabla\tilde{f}\right)-Q_r\tilde{f} \\
&=& -\dive_\Sigma\left(\cos\frac{\pi t}{l}P_r\nabla f_0-\frac{\pi}{l}\sin\frac{\pi t}{l}P_r\frac{\partial}{\partial t}\right)-Q_rf_0\cos\frac{\pi t}{l} \\
&=& \left(-\dive_{\Sigma_0}\left(P_r^{(0)}\nabla^{(0)}f_0\right)+Q_rf_0\right)\cos\frac{\pi t}{l}+S_r^{(0)}\frac{\pi^2}{l^2}f_0\cos\frac{\pi t}{l} \\
&=& \left(\tilde\lambda_1+S_r^{(0)}\frac{\pi^2}{l^2}\right)\tilde{f}.
\end{eqnarray*}
Thus, $\mathcal{I}_r^S(\tilde{f},\tilde{f})=\int_\Sigma \left(\tilde\lambda_1+S_r^{(0)}\frac{\pi^2}{l^2}\right)\tilde{f}^2\,d\mu_\Sigma<0$, proving that $\Sigma$ is symmetric $r$-unstable.

Now suppose that $S_r^{(0)}$ is constant and that $\tilde\lambda_1+S_r^{(0)}\frac{\pi^2}{l^2} \geq 0$. Consider the immersion $\widehat\varphi :=\varphi \times \id_{\mathbb{S}^1\left(\frac{l}{\pi}\right)} : \widehat\Sigma:=\Sigma_0 \times \mathbb{S}^1\left(\frac{l}{\pi}\right) \rightarrow M \times \mathbb{S}^1\left(\frac{l}{\pi}\right)$ and denote with a hat $\widehat\cdot$ the quantities related to $\widehat\varphi$. The curvatures of $\widehat\Sigma$ are the equal to those of $\Sigma$ and if $\widehat\eta$, $\widehat{A}$, $\widehat{P_r}$ are the unit normal vector field, second fundamental form and the $r$-th order Newton transformation of $\widehat\varphi$ and $\widehat{R}$ is the Riemann curvature tensor of $M \times \mathbb{S}^1\left(\frac{l}{\pi}\right)$ then $\dive_{\widehat\Sigma}\left(\widehat{P}_r\widehat\nabla\cdot\right)=\dive_\Sigma\left(P_r\nabla\cdot\right)$ and $\widehat{Q}_r=Q_r$ in $\widehat\Sigma$. The eigenvalues and eigenfunctions of the eigenvalue problem $f^{\prime\prime}+\frac{\mu}{S_r^{(0)}}f=0$ on $\mathbb{S}^1\left(\frac{l}{\pi}\right)$ are given by $\mu_m=S_r^{(0)}\frac{m^2\pi^2}{l^2}$ for $m \in \mathbb{N} \cup \{0\}$ and $f_m(t)=\cos\left(\sqrt{\frac{\mu_m}{S_r^{(0)}}}t\right)$. Let $\lambda_1^{(0)}<\lambda_2^{(0)} \leq ... \leq \lambda_k^{(0)} \nearrow +\infty$ be the sequence of eigenvalues of $T_r^{S,(0)}$ on $\Sigma_0$. Since $\widehat{Q}_r=Q_r$ depends only on the coordinates of $\Sigma_0$, the eigenvalues of $\widehat{T}_r^S$ on $\widehat\Sigma$ are given by $\lambda_k^{(0)}+S_r^{(0)}\frac{m^2\pi^2}{l^2}$, with $(k,m) \in \mathbb{N} \times \left(\mathbb{N} \cup \{0\}\right)$. In particular, the first two eigenvalues of \eqref{0056} in $\widehat\Sigma$ are equal to
\begin{eqnarray*}
\widehat\lambda_1 &=& \lambda_1^{(0)} \\
\widehat\lambda_2 &=& \min\left\{\lambda_1^{(0)}+S_r^{(0)}\frac{\pi^2}{l^2},\lambda_2^{(0)}\right\}.
\end{eqnarray*}
If $\lambda_1^{(0)} \geq 0$ then $\widehat\lambda_1 \geq 0$, which implies that $\widehat\Sigma$ is strongly symmetric $r$-stable. So we assume $\lambda_1^{(0)}<0$. Since $\Sigma_0$ is symmetric $r$-stable, it follows from Lemma \ref{012.1} that $\lambda_2^{(0)} \geq 0$. Thus, $\widehat\lambda_2 \geq 0$. Consider the solution $f \in C^\infty(\Sigma_0)$ to the equation $T_r^Sf=-1$ described on the items (ii) and (iv) of Lemma \ref{012.1}. Then the function $\widetilde{f}(p_0,t)=f(p_0) \in C^\infty(\widehat\Sigma)$ satisfies $\widehat{T}_r^S\widehat{f}=T_r^Sf=-1$ and $$\int_{\widehat\Sigma}\widehat{f}\,d\mu_{\widehat\Sigma}=2l\int_{\Sigma_0}f\,d\mu_{\Sigma_0} \geq 0.$$ Hence, it follows from the items (ii) and (iv) of Lemma \ref{012.1} that $\widehat\Sigma$ is symmetric $r$-stable.

Now we will show the symmetrical $r$-stability of $\widehat\Sigma$ implies the symmetrical $r$-stability of $\Sigma$. In fact, let $f \in \mathcal{F}_\Sigma$ and extend $f$ to a function in $H^1(\Sigma_0 \times [0,2l])$ denoting $f(p_0,t)=f(p_0,2l-t)$ for $(p_0,t) \in \Sigma_0 \times [l,2l]$. Since $f(p_0,2l)=f(p_0,0)$ for all $p_0 \in \Sigma_0$ we obtain a function $\widehat{f} \in H^1(\widehat\Sigma)$ such that $\int_{\widehat\Sigma}\widehat{f}\,d\mu_{\widehat\Sigma}=0$ and $\mathcal{I}_r^S(f,f)=\frac{1}{2}\widehat{\mathcal{I}}_r^S(\widehat{f},\widehat{f}) \geq 0$, proving that $\Sigma$ is symmetric $r$-stable.
\end{proof}

As a consequence of Theorem \ref{012.2} we obtain a characterization of symmetric $r$-stable tubes $\partial B_R^{\mathbb{M}^n(c)} \times [0,l] \subseteq \mathbb{M}^n(c) \times [0,l]$ with radius $R>0$ and height $l>0$. Here we will consider the warped product model $\mathbb{M}^n(c)=[0,R_c) \times_{\sn_c} \mathbb{S}^{n-1}$, where $R_c=+\infty$ if $c<0$ and $R_c=\frac{\pi}{\sqrt{c}}$ if $c>0$. Since geodesic spheres with radius $R \in (0,R_c)$ are totally umbilical hypersurfaces with constant curvature equal to $\frac{\cn_c(R)}{\sn_c(R)}$, we have that $S_r^{(0)}=\binom{n-r-1}{n-1}\left(\frac{\cn_c(R)}{\sn_c(R)}\right)^r$ and $P_r^{(0)}=\frac{n-r-1}{n-1}\binom{n-r-1}{n-1}\left(\frac{\cn_c(R)}{\sn_c(R)}\right)^rI$. Hence,
\begin{eqnarray}
T_r^{(0)} &=& -\frac{n-r-1}{n-1}\binom{n-r-1}{n-1}\left(\frac{\cn_c(R)}{\sn_c(R)}\right)^r\left(\Delta+(n-1)\left(\frac{\cn_c^2(R)}{\sn_c^2(R)}+c\right)\right) \nonumber\\
&=& \frac{n-r-1}{n-1}\binom{n-1}{r}s_c^r(R)T_0^{(0)}. \label{0062}
\end{eqnarray}
Now since the first closed eigenvalue of $-\Delta$ on $\mathbb{S}^{n-1}$ is equal to $0$ (see \cite[p. 34]{chavel1984eigenvalues}), the first eigenvalue of $-\Delta$ on $\partial B_R=\left(\mathbb{S}^{n-1},\frac{\cn_c^2(R)}{\sn_c^2(R)}\,g_{\mathbb{S}^{n-1}}\right)$ is given by $\lambda_1(-\Delta,\partial B_R)=\frac{\lambda_1(-\Delta,\mathbb{S}^{n-1})}{f_c(R)^2}=0$. Thus, the first eigenvalue $\lambda_1^{(0)}$ of \eqref{0062} is equal to
\begin{eqnarray}
\lambda_1^{(0)} &=& -(n-r-1)\binom{n-1}{r}\cdot \left(\frac{\cn_c(R)}{\sn_c(R)}\right)^r \cdot \left(\frac{\cn_c^2(R)}{\sn_c^2(R)}+c\right) \nonumber \\
&=& -(n-r-1)\binom{n-1}{r} \cdot \frac{1}{\sn_c^2(R)} \cdot \left(\frac{\cn_c(R)}{\sn_c(R)}\right)^r \label{0063}
,\end{eqnarray}
where the last equation is a consequence of the identity $\cn_c^2(\rho)+c\sn_c^2(\rho)=1$ for all $\rho \in [0,R_c)$. Also, since geodesic spheres on space forms are $r$-stable \cite[Proposition 5.1]{marques1997stability}, we obtain a generalization of \cite[Corollary 3.4]{souam2021stable}.

\noindent\begin{cor}\label{012.3}

\ 

\begin{enumerate}[(i)]
\item A tube of radius $R>0$ and height $l>0$ in $\mathbb{H}^n(c) \times [0,l]$ is symmetric $r$-stable if and only if $\frac{\pi\sinh\left(R\sqrt{-c}\right)}{\sqrt{-c}} \geq l\sqrt{n-r-1}$.
\item A tube of radius $R>0$ and height $l>0$ in $\mathbb{R}^n \times [0,l]$ is $r$-stable if and only if $\pi R \geq l\sqrt{n-r-1}$.
\item A tube of radius $R>0$ and height $l>0$ in $\mathbb{S}^n(c) \times [0,l]$ is symmetric $r$-stable if and only if $\frac{\pi\sin\left(R\sqrt{c}\right)}{\sqrt{c}} \geq l\sqrt{n-r-1}$.
\end{enumerate}
\end{cor}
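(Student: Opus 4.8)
The plan is to apply Theorem \ref{012.2} with $\Sigma_0 = \partial B_R^{\mathbb{M}^n(c)}$, a geodesic sphere of radius $R$ in the $n$-dimensional space form $\mathbb{M}^n(c)$, so that $\Sigma = \partial B_R \times [0,l] \subseteq \mathbb{M}^n(c) \times [0,l]$ is the tube. The first ingredient is that geodesic spheres are $r$-stable (hence symmetric $r$-stable) by \cite[Proposition 5.1]{marques1997stability}; this is exactly the hypothesis ``$\Sigma_0$ is symmetric $r$-stable'' in part (ii) of the theorem. The second ingredient is that $S_r^{(0)} = \binom{n-1}{r}\bigl(\tfrac{\cn_c(R)}{\sn_c(R)}\bigr)^r$ is a positive constant on $\Sigma_0$ (the sphere is totally umbilical with all principal curvatures equal to $\tfrac{\cn_c(R)}{\sn_c(R)}$), so the ``$S_r^{(0)}$ constant'' hypothesis of part (b) is satisfied and moreover $\Sigma_0$ is positive definite. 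Thus parts (a) and (b) of Theorem \ref{012.2}(ii) together give: $\Sigma$ is symmetric $r$-stable if and only if $\lambda_1^{(0)} + S_r^{(0)}\frac{\pi^2}{l^2} \geq 0$, where $\lambda_1^{(0)}$ is the first eigenvalue of $T_r^{S,(0)}$ on $\Sigma_0$.

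The next step is to compute $\lambda_1^{(0)}$ explicitly, which is already carried out in the excerpt right before the corollary: combining \eqref{0062} and \eqref{0063} (using that the first eigenvalue of $-\Delta$ on the round $\mathbb{S}^{n-1}$ is $0$, and the identity $\cn_c^2(\rho) + c\,\sn_c^2(\rho) = 1$) yields
\[
\lambda_1^{(0)} = -(n-r-1)\binom{n-1}{r}\cdot\frac{1}{\sn_c^2(R)}\cdot\left(\frac{\cn_c(R)}{\sn_c(R)}\right)^r.
\]
Substituting this and $S_r^{(0)} = \binom{n-1}{r}\bigl(\tfrac{\cn_c(R)}{\sn_c(R)}\bigr)^r$ into the inequality $\lambda_1^{(0)} + S_r^{(0)}\frac{\pi^2}{l^2} \geq 0$, the common positive factor $\binom{n-1}{r}\bigl(\tfrac{\cn_c(R)}{\sn_c(R)}\bigr)^r$ cancels, leaving
\[
\frac{\pi^2}{l^2} \geq \frac{n-r-1}{\sn_c^2(R)},
\qquad\text{i.e.}\qquad
\pi^2 \sn_c^2(R) \geq l^2 (n-r-1),
\]
which, after taking square roots (all quantities nonnegative), is precisely $\pi\,\sn_c(R) \geq l\sqrt{n-r-1}$. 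Unwinding $\sn_c$ from \eqref{0072} for $c = -1$, $c = 0$, $c = 1$ gives the three stated inequalities in (i), (ii), (iii) respectively; note that in the Euclidean case $X_r = 0$ so symmetric $r$-stability coincides with $r$-stability, which is why (ii) is phrased without the word ``symmetric''.

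The main obstacle — if one insists on a fully self-contained argument — is verifying that $T_r^{S,(0)}$ on the sphere really reduces to a constant multiple of $-\Delta - (n-1)\bigl(\tfrac{\cn_c^2(R)}{\sn_c^2(R)}+c\bigr)$, i.e.\ establishing \eqref{0062}; this uses $P_r^{(0)} = \tfrac{n-r-1}{n-1}\binom{n-1}{r}\bigl(\tfrac{\cn_c(R)}{\sn_c(R)}\bigr)^r I$ (so $X_r^{(0)} = -\,(P_r^{(0)})^{-1}\dive P_r^{(0)} = 0$, hence $Q_r^{(0)} = q_r^{(0)} = \tr(P_r^{(0)}(A^2 + \overline R_\eta))$) together with $\tr(P_r^{(0)} A^2) = (n-1)\tfrac{\cn_c^2(R)}{\sn_c^2(R)}\cdot\bigl(\text{the scalar factor}\bigr)$ and $\tr(P_r^{(0)}\overline R_\eta) = c(n-1)\cdot(\text{same factor})$. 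But all of this is stated in the excerpt just prior to the corollary, so in the write-up I would simply cite \eqref{0062}, \eqref{0063} and the identity $\cn_c^2 + c\,\sn_c^2 = 1$, invoke Theorem \ref{012.2}, and perform the elementary cancellation above. The only genuinely new content is the bookkeeping of the three cases of $\sn_c$, which is routine.
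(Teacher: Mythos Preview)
Your proposal is correct and follows essentially the same route as the paper: invoke Theorem \ref{012.2} with $\Sigma_0=\partial B_R$ (using that geodesic spheres in space forms are $r$-stable and have constant $S_r^{(0)}$), substitute the expressions \eqref{0063} for $\lambda_1^{(0)}$ and $S_r^{(0)}=\binom{n-1}{r}\bigl(\tfrac{\cn_c(R)}{\sn_c(R)}\bigr)^r$ into the criterion $\lambda_1^{(0)}+S_r^{(0)}\tfrac{\pi^2}{l^2}\geq 0$, cancel the common positive factor, and specialize $\sn_c$ to the three cases. The only minor imprecision is that the corollary is stated for general $c$ rather than $c\in\{-1,0,1\}$, so you should unwind $\sn_c$ from \eqref{0072} for arbitrary $c<0$, $c=0$, $c>0$ respectively.
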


\begin{proof}
Since $\partial B_R$ is $r$-stable, it follows from Theorem \ref{012.2} that $\partial B_R \times [0,l]$ is symmetric $r$-stable if and only if $\lambda_1^{(0)}+S_r^{(0)}\frac{\pi^2}{l^2} \geq 0$. But since $S_r^{(0)}=\binom{n-1}{r}\left(\frac{\cn_c(R)}{\sn_c(R)}\right)^r$, if follows from \eqref{0063} that a tube is symmetrically stable if and only if
\begin{eqnarray*}
\binom{n-1}{r} \cdot \left(\frac{\cn_c(R)}{\sn_c(R)}\right)^r \cdot \left(-\frac{n-r-1}{\sn_c^2(R)}+\frac{\pi^2}{l^2}\right) \geq 0 &\iff& \frac{\pi^2}{l^2} \geq \frac{n-r-1}{\sn_c^2(R)} \\
&\iff& \pi\sn_c(R) \geq l\sqrt{n-r-1},
\end{eqnarray*}
proving the result.
\end{proof}

\appendix
\section{Proof of Theorem \ref{004.2}}
For completeness we will give a proof of Proposition \ref{004.2}. The computations made here are similar of when $r=0$, done by Ros and Souam in \cite[Section 4]{ros1997stability}. Let $\Phi : \Sigma^n \times (-\varepsilon,\varepsilon) \rightarrow \Omega \subseteq M$ be an admissible volume-preserving variation of a capillary $H_{r+1}$-hypersurface $\varphi : \Sigma \rightarrow M$ supported on $\partial\Omega$. The derivative of \eqref{0005} is equal to
\begin{equation}\label{0064}
\left.\frac{\partial}{\partial t}\mathcal{F}_r[\Sigma_t]\right\vert_{t=0}=-\int_\Sigma \left.\frac{\partial}{\partial t}\left(S_{r+1}(t)\left<\xi_t,\eta_t\right>\,d\mu_{\Sigma_t}\right)\right\vert_{t=0}+\int_{\partial\Sigma} \left.\frac{\partial}{\partial t}\left(\left<\xi_t,(P_r\nu-\vert{P_r\nu}\vert\cos\theta\,\overline\nu)_t\right>\,d\mu_{\partial\Sigma_t}\right)\right\vert_{t=0}.
\end{equation}

Set
\begin{eqnarray}
I_1 &=& -\int_\Sigma \left.\frac{\partial}{\partial t}\left(S_{r+1}(t)\left<\xi_t,\eta_t\right>\,d\mu_{\Sigma_t}\right)\right\vert_{t=0} \label{0065} \\
I_2 &=& \int_{\partial\Sigma} \left.\frac{\partial}{\partial t}\left(\left<\xi_t,\left(P_r\nu-\vert{P_r\nu}\vert\cos\theta\,\overline\nu\right)_t\right>\,d\mu_{\partial\Sigma_t}\right)\right\vert_{t=0} \label{0066}
\end{eqnarray}
Since $S_{r+1}(0)$ is constant and $\Phi$ is volume-preserving, applying \eqref{0004} to \eqref{0065} yields to
\begin{eqnarray}
I_1 &=& -\int_\Sigma S_{r+1}^\prime(0)\left<\xi,\eta\right>\,d\mu_\Sigma \nonumber \\
&=& -\int_\Sigma f\left(L_rf+\left(S_1S_{r+1}-(r+2)S_{r+2}\right)f+\tr\left(P_r\overline{R}_\eta\right)f\right)\,d\mu_\Sigma, \label{0067}
\end{eqnarray}
where $f=\left<\xi,\eta\right> \in C^\infty(\Sigma)$ is the support function of $\Phi$ at $t=0$. The Lemma \ref{002.4} with \eqref{0002} and the fact that $\xi\vert_{\partial\Sigma} \in \Gamma\left(T\partial\Omega\vert_{\partial\Sigma}\right)$ imply that $$\left<\xi,P_r\nu-\vert{P_r\nu}\vert\cos\theta\,\overline\nu\right>=\vert{P_r\nu}\vert\left<\xi,\nu-\cos\theta\,\overline\nu\right>=\vert{P_r\nu}\vert\left<\xi,\sin\theta\,\overline\eta\right>=0$$ along $\partial\Sigma$. Thus, the second term of \eqref{0064} is equal to 
\begin{equation}\label{0068}
I_2=\int_{\partial\Sigma} \left(\left<\overline\nabla_\xi\xi,P_r\nu-\vert{P_r\nu}\vert\cos\theta\,\overline\nu\right>+\left<\xi,\overline\nabla_\xi\left(P_r\nu-\vert{P_r\nu}\vert\cos\theta\,\overline\nu\right)\right>\right)\,d\mu_{\partial\Sigma}.
\end{equation}

Since
\begin{eqnarray}
\left<\overline\nabla_\xi\xi,P_r\nu-\vert{P_r\nu}\vert\cos\theta\,\overline\nu\right> &=& \vert{P_r\nu}\vert\left<\overline\nabla_\xi\xi,\nu-\cos\theta\,\overline\nu\right> \label{0069} \\
\left<\xi,\overline\nabla_\xi\left(P_r\nu-\vert{P_r\nu}\vert\cos\theta\,\overline\nu\right)\right> &=& \left<\xi,\overline\nabla_{P_r\nu}\xi+\left[\xi,P_r\nu\right]-\cos\theta\,\overline\nabla_\xi\left(\vert{P_r\nu}\vert\,\overline\nu\right)\right> \nonumber \\
&=& \left<\xi,\vert{P_r\nu}\vert\overline\nabla_\nu\xi+\left[\xi,\vert{P_r\nu}\vert\,\nu\right]-\cos\theta\left(\xi\vert{P_r\nu}\vert\,\overline\nu+\vert{P_r\nu}\vert\,\overline\nabla_\xi\overline\nu\right)\right> \nonumber \\
&=& \left<\xi,\vert{P_r\nu}\vert\left(\overline\nabla_\nu\xi+\left[\xi,\nu\right]-\cos\theta\,\overline\nabla_\xi\overline\nu\right)+\xi\vert{P_r\nu}\vert\left(\nu-\cos\theta\,\overline\nu\right)\right> \nonumber \\
&=& \vert{P_r\nu}\vert\left<\xi,\overline\nabla_\xi\nu-\cos\theta\,\overline\nabla_\xi\overline\nu\right>+\xi\vert{P_r\nu}\vert\left<\xi,\sin\theta\,\overline\eta\right> \nonumber \\
&=& \vert{P_r\nu}\vert\left<\xi,\overline\nabla_\xi\nu-\cos\theta\,\overline\nabla_\xi\overline\nu\right>. \label{0070}
\end{eqnarray}
the equations \eqref{0068}, \eqref{0069} and \eqref{0070} give
\begin{equation}\label{0071}
I_2=\int_{\partial\Sigma}\vert{P_r\nu}\vert\left(\left<\overline\nabla_\xi\xi,\nu-\cos\theta\,\overline\nu\right>+\left<\xi,\overline\nabla_\xi\nu-\cos\theta\,\overline\nabla_\xi\overline\nu\right>\right)\,d\mu_{\partial\Sigma}.
\end{equation}

Notice that in the boundary $\partial\Sigma$, the variational field can be written as $\xi=\widetilde\xi+\left<\xi,\nu\right>\nu+f\eta$, where $\widetilde\xi \in \Gamma\left(T\partial\Sigma\right)$ is the projection of $\xi$ onto $T\partial\Sigma$. From the fact that $\xi\vert{\partial\Sigma} \in \Gamma\left(T\partial\Omega\vert_{\partial\Sigma}\right)$ and the relation between $\overline\nu$ and $\{\nu,\eta\}$, we have $$0=\left<\xi,\overline\eta\right>=\left<\xi,\sin\theta\,\nu+\cos\theta\,\eta\right> \implies \left<\xi,\nu\right>=-f\cot\theta.$$ Thus,
\begin{eqnarray}
\xi &=& \widetilde\xi-f\cot\theta\,\nu+f\eta=\widetilde\xi-\frac{f}{\sin\theta}\left(\cos\theta\,\nu-\sin\theta\,\eta\right) \nonumber \\
&=& \widetilde\xi-f\csc\theta\,\overline\nu \label{0074}
.\end{eqnarray}
From \eqref{0002} and \eqref{0074},
\begin{eqnarray}
\left<\overline\nabla_\xi\xi,\nu-\cos\theta\,\overline\nu\right> &=& \sin\theta\left<\overline\nabla_\xi\xi,\overline\nu\right>=\sin\theta\left(\emph{II}_{\partial\Omega}\right)_{\overline\eta}(\xi,\xi) \nonumber \\
&=& \sin\theta\left(\emph{II}_{\partial\Omega}\right)_{\overline\eta}(\widetilde\xi-f\csc\theta\,\overline\nu,\widetilde\xi-f\csc\theta\,\overline\nu) \nonumber \\
&=& \sin\theta\left(\emph{II}_{\partial\Omega}\right)_{\overline\eta}(\widetilde\xi,\widetilde\xi)-2f\left(\emph{II}_{\partial\Omega}\right)_{\overline\eta}(\widetilde\xi,\overline\nu)+f^2\csc\theta\left(\emph{II}_{\partial\Omega}\right)_{\overline\eta}(\overline\nu,\overline\nu) \label{0088}
.\end{eqnarray}

In order to calculate the second term of \eqref{0071}, one needs to calculate $\overline\nabla_\xi\eta$, $\overline\nabla_\xi\nu$ and $\overline\nabla_\xi\overline\nu$. Let $\{e_1,...,e_n\}$ be an orthonormal basis of $T_p\Sigma$ for some point $p \in \Sigma$ such that $[e_i,e_j]=[e_i,\xi]=0$ for $i \neq j$ and extend them via $(\varphi_t)_*$ at $p$. Since $\left<e_i,\eta\right>=0$ and $\left[e_i,\xi\right]=0$, we have
\begin{eqnarray}
\overline\nabla_\xi \eta &=& \left<\overline\nabla_\xi \eta,\eta\right>\eta+\sum_{i=1}^n \left<\overline\nabla_\xi \eta,e_i\right>e_i \nonumber \\
&=& -\sum_{i=1}^n \left<\overline\nabla_\xi e_i,\eta\right>e_i \nonumber \\
&=& -\sum_{i=1}^n \left<\overline\nabla_{e_i}\xi,\eta\right>e_i \nonumber \\
&=& -\sum_{i=1}^n \left<\overline\nabla_{e_i} \left(\xi^\top+f\eta\right),\eta\right>e_i \nonumber \\
&=& -\sum_{i=1}^n \left<\overline\nabla_{e_i}\xi^\top+f\overline\nabla_{e_i}\eta+e_if\,\eta,\eta\right>e_i \nonumber \\
&=& \sum_{i=1}^n \left(\left<\overline\nabla_{e_i}\eta,\xi^\top\right>-e_if\right)e_i \nonumber \\
&=& -\sum_{i=1}^n \left<-\overline\nabla_{\xi^\top}\eta,e_i\right>e_i-\nabla f \nonumber \\
&=& -A\xi^\top-\nabla f \label{0080}
,\end{eqnarray}
where $A$ is the shape operator of $\varphi$. Therefore, if $\{\widetilde{e}_1,...,\widetilde{e}_{n-1}\}$ denotes an orthonormal basis of $T_p(\partial\Sigma)$ with $p \in \partial\Sigma$, extending them via $\left(\varphi_t\right)_*$ at $p$ and using \eqref{0080}, we have
\begin{eqnarray}
\overline\nabla_\xi \nu &=& \left<\overline\nabla_\xi \nu,\nu\right>\nu+\left<\overline\nabla_\xi \nu,\eta\right>\eta+\sum_{i=1}^{n-1} \left<\overline\nabla_\xi \nu,\widetilde{e}_i\right>\widetilde{e}_i \nonumber \\
&=& -\left<\overline\nabla_\xi \eta,\nu\right>\eta-\sum_{i=1}^{n-1} \left<\overline\nabla_\xi \widetilde{e}_i,\nu\right>\widetilde{e}_i \nonumber \\
&=& \left<A\xi^\top+\nabla f,\nu\right>\eta-\sum_{i=1}^{n-1} \left<\overline\nabla_{\widetilde{e}_i}\xi,\nu\right>\widetilde{e}_i \nonumber \\
&=& \left(\frac{\partial f}{\partial\nu}+\left(\emph{II}_\Sigma\right)_\eta(\xi^\top,\nu)\right)\eta-\sum_{i=1}^{n-1}\left<\overline\nabla_{\widetilde{e}_i}\left(\widetilde\xi-f\cot\theta\,\nu+f\eta\right),\nu\right>\widetilde{e}_i \nonumber \\
&=& \left(\frac{\partial f}{\partial\nu}+\left(\emph{II}_\Sigma\right)_\eta(\xi^\top,\nu)\right)\eta-\sum_{i=1}^{n-1}\left(\left<\overline\nabla_{\widetilde{e}_i}\widetilde\xi,\nu\right>-\cot\theta\,\widetilde{e}_if+f\left<\overline\nabla_{\widetilde{e}_i}\eta,\nu\right>\right)\widetilde{e}_i \nonumber \\
&=& \left(\frac{\partial f}{\partial\nu}+\left(\emph{II}_\Sigma\right)_\eta(\xi^\top,\nu)\right)\eta-\sum_{i=1}^{n-1}\left(\left<\nabla_{\widetilde{e}_i}\widetilde\xi,\nu\right>-\cot\theta\,\widetilde{e}_if-f\left<\overline\nabla_{\widetilde{e}_i}\nu,\eta\right>\right)\widetilde{e}_i \nonumber \\
&=& \left(\frac{\partial f}{\partial\nu}+\left(\emph{II}_\Sigma\right)_\eta(\xi^\top,\nu)\right)\eta-A_{\partial\Sigma}\widetilde\xi+\cot\theta\,\widetilde\nabla f+f\left(A\nu-\left(\emph{II}_\Sigma\right)_\eta(\nu,\nu)\nu\right) \label{0081}
,\end{eqnarray}
where $A_{\partial\Sigma}$ is the shape operator of $\iota_{\partial\Sigma} : \partial\Sigma \hookrightarrow \Sigma$ and $\widetilde\nabla f$ is the gradient of $f$ in $\partial\Sigma$. We also have
\begin{eqnarray}
\overline\nabla_\xi\overline{\nu} &=& \left<\overline\nabla_\xi\overline{\nu},\overline\nu\right>\overline\nu+\left<\overline\nabla_\xi\overline{\nu},\overline\eta\right>\overline\eta+\sum_{i=1}^{n-1}\left<\overline\nabla_\xi\overline{\nu},\widetilde{e}_i\right>\widetilde{e}_i \nonumber \\
&=& \left(\emph{II}_{\partial\Omega}\right)_{\overline\eta}(\xi,\overline\nu)\,\overline\eta-\sum_{i=1}^{n-1}\left<\overline\nabla_\xi\widetilde{e}_i,\overline\nu\right>\widetilde{e}_i \nonumber\\
&=& \left(\emph{II}_{\partial\Omega}\right)_{\overline\eta}(\xi,\overline\nu)\,\overline\eta-\sum_{i=1}^{n-1}\left<\overline\nabla_{\widetilde{e}_i}\left(\widetilde\xi-f\csc\theta\,\overline\nu\right),\overline\nu\right>\widetilde{e}_i \nonumber\\
&=& \left(\emph{II}_{\partial\Omega}\right)_{\overline\eta}(\xi,\overline\nu)\,\overline\eta-\sum_{i=1}^{n-1}\left(\left<\overline\nabla_{\widetilde{e}_i}\widetilde\xi,\overline\nu\right>-\csc\theta\,\widetilde{e}_if\right)\widetilde{e}_i \nonumber\\
&=& \left(\emph{II}_{\partial\Omega}\right)_{\overline\eta}(\xi,\overline\nu)\,\overline\eta-\widetilde{A}\widetilde{\xi}-\csc\theta\,\widetilde\nabla f \label{0082}
,\end{eqnarray}
where $\widetilde{A}$ is the shape operator of $\varphi\vert_{\partial\Sigma} : \partial\Sigma \rightarrow \partial\Omega$.

Using \eqref{0081} and \eqref{0082} in \eqref{0070} we obtain
\begin{multline}\label{0083}
\left<\xi,\overline\nabla_\xi\nu-\cos\theta\,\overline\nabla_\xi\overline\nu\right>=f\left(\frac{\partial f}{\partial\nu}+\left(\emph{II}_\Sigma\right)_\eta(\xi^\top,\nu)\right)-\left<A_{\partial\Sigma}\widetilde\xi,\xi\right>+f\left<A\nu,\xi\right>+\\+f^2\cot\theta\left(\emph{II}_\Sigma\right)_\eta(\nu,\nu)+\cos\theta\left<\widetilde{A}\widetilde\xi,\xi\right>
.\end{multline}
Since
\begin{eqnarray}
\left(\emph{II}_\Sigma\right)_\eta(\xi^\top,\nu) &=& \left<\overline\nabla_{\xi^\top}\nu,\eta\right>=\left<\overline\nabla_{\widetilde\xi}\nu,\eta\right>-f\cot\theta\left<\overline\nabla_\nu\nu,\eta\right> \nonumber \\
&=& \left<\overline\nabla_{\widetilde\xi}\left(\cos\theta\,\overline\nu+\sin\theta\,\overline\eta\right),-\sin\theta\,\overline\nu+\cos\theta\,\overline\eta\right>-f\cot\theta\left(\emph{II}_\Sigma\right)_\eta(\nu,\nu) \nonumber\\
&=& \left<\overline\nabla_{\widetilde\xi}\overline\nu,\overline\eta\right>-f\cot\theta\left(\emph{II}_\Sigma\right)_\eta(\nu,\nu) \nonumber\\
&=& \left(\emph{II}_{\partial\Omega}\right)_{\overline\eta}(\widetilde\xi,\overline\nu)-f\cot\theta\left(\emph{II}_\Sigma\right)_\eta(\nu,\nu) \label{0084}
,\end{eqnarray}
\begin{eqnarray}
\left<A_{\partial\Sigma}\widetilde\xi,\xi\right> &=& \left<A_{\partial\Sigma}\widetilde\xi,\widetilde\xi\right>=-\left<\nabla_{\widetilde\xi}\nu,\widetilde\xi\right>\nonumber\\
&=& -\left<\overline\nabla_{\widetilde\xi}\nu-\left(\emph{II}_\Sigma\right)_\eta(\widetilde\xi,\nu)\,\eta,\widetilde\xi\right> \nonumber \\
&=& -\left<\overline\nabla_{\widetilde\xi}\left(\cos\theta\,\overline\nu+\sin\theta\,\overline\eta\right),\widetilde\xi\right> \nonumber\\
&=& -\cos\theta\left<\overline\nabla_{\widetilde\xi}\overline\nu,\widetilde\xi\right>-\sin\theta\left<\overline\nabla_{\widetilde\xi}\overline\eta,\widetilde\xi\right> \nonumber\\
&=& \cos\theta\left<\widetilde{A}\widetilde\xi,\widetilde\xi\right>+\sin\theta\left(\emph{II}_{\partial\Omega}\right)_{\overline\eta}(\widetilde\xi,\widetilde\xi) \nonumber\\
&=& \cos\theta\left<\widetilde{A}\widetilde\xi,\xi\right>+\sin\theta\left(\emph{II}_{\partial\Omega}\right)_{\overline\eta}(\widetilde\xi,\widetilde\xi) \label{0085}
\end{eqnarray}
and
\begin{equation}\label{0086}
\left<A\nu,\xi\right>=\left<A\nu,\xi^\top\right>=\left(\emph{II}_\Sigma\right)_\eta(\xi^\top,\nu)
,\end{equation}
the equations \eqref{0084}, \eqref{0085} and \eqref{0086} in \eqref{0083} yield to
\begin{eqnarray}
\left<\xi,\overline\nabla_\xi\nu-\cos\theta\,\overline\nabla_\xi\overline\nu\right> &=& f\frac{\partial f}{\partial\nu}+2f\left(\emph{II}_\Sigma\right)_\eta(\xi^\top,\nu)+f^2\cot\theta\left(\emph{II}_\Sigma\right)_\eta(\nu,\nu)-\sin\theta\left(\emph{II}_{\partial\Omega}\right)_{\overline\eta}(\widetilde\xi,\widetilde\xi) \nonumber\\
&=& f\frac{\partial f}{\partial\nu}+2f\left(\emph{II}_{\partial\Omega}\right)_{\overline\eta}(\widetilde\xi,\overline\nu)-f^2\cot\theta\left(\emph{II}_\Sigma\right)_\eta(\nu,\nu)-\sin\theta\left(\emph{II}_{\partial\Omega}\right)_{\overline\eta}(\widetilde\xi,\widetilde\xi) \label{0087}
.\end{eqnarray}
Summing \eqref{0088} and \eqref{0087} we obtain
\begin{multline}\label{0089}
\left<\overline\nabla_\xi\xi,\nu-\cos\theta\,\overline\nu\right>+\left<\xi,\overline\nabla_\xi\nu-\cos\theta\,\overline\nabla_\xi\overline\nu\right>=f\left(\frac{\partial f}{\partial\nu}+\left(\csc\theta\left(\emph{II}_{\partial\Omega}\right)_{\overline\eta}(\overline\nu,\overline\nu)-\cot\theta\left(\emph{II}_\Sigma\right)_\eta(\nu,\nu)\right)f\right).
\end{multline}
We complete the proof after integrating \eqref{0089} over $\partial\Omega$.

\section{Proof of Proposition \ref{004.6}}
In this appendix we give a proof of Proposition \ref{004.6} based on the arguments of \cite[Appendix A]{guo2022stable}. First notice that a totally umbilical hypersurface $\varphi : \Sigma \rightarrow \Omega \subseteq \mathbb{M}^{n+1}(c)$ supported on a totally umbilical hypersurface $\partial\Omega$ is $r$-stable if and only if it is $0$-stable. In fact, if $\kappa \in (0,\infty)$ is the umbilicity factor of $\varphi$ then $P_r=\frac{n-r}{n}\binom{n}{r}\kappa^r\,I$. Hence, $S_r=\frac{1}{n-r}\tr P_r=\binom{n}{r}\kappa^r$ and for all $f_1,f_2 \in H^1(\Sigma)$,
\begin{eqnarray*}
\mathcal{I}_{r,\theta}(f_1,f_2) &=& \frac{n-r}{n}\binom{n}{r}\kappa^r\left(\int_\Sigma \left<\nabla f_1,\nabla f_2\right>-\left(\vert{A}\vert^2+nc\right)f_1f_2\,d\mu_\Sigma+\int_{\partial\Sigma}\alpha_\theta f_1f_2\,d\mu_{\partial\Sigma}\right) \\
&=& \frac{n-r}{n}\binom{n}{r}\kappa^r\,\mathcal{I}_{0,\theta}(f_1,f_2),
\end{eqnarray*}
proving the claim.

For the rest of the appendix we will consider the warped product model for $\mathbb{M}^{n+1}(c)$ described on Section 7. Now for $\rho_0 \in (0,R_c)$, consider the Robin eigenvalue problem for $-\Delta-nc$ on a geodesic ball $B_{\rho_0}$ of $\mathbb{M}^n(c)$
\begin{equation}\label{0061}
\begin{dcases*}
-\Delta f-ncf=\lambda f,& in $B_{\rho_0}$ \\
\frac{\partial f}{\partial\overline\eta}-\frac{\cn_c(\rho_0)}{\sn_c(\rho_0)}f=0,& in $\partial B_{\rho_0}$
\end{dcases*}
.\end{equation}
where $\overline\eta$ is the outward unit normal field of $\partial B_{\rho_0}$ and denote $\lambda_1$ and $\lambda_2$ the first and the second Robin eigenvalues of \eqref{0061}. The following lemma, which is adapted from \cite[Proposition A.1]{guo2022stable}, shows that the second eigenvalue is equal to zero.

\begin{lema}\label{00B.1}
$\lambda_2=0$ and its multiplicity is equal to $n$.
\end{lema}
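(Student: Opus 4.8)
The plan is to analyze the Robin eigenvalue problem \eqref{0061} by separating it into radial and spherical parts, using the warped product structure $\mathbb{M}^n(c) = [0,R_c)\times_{\sn_c}\mathbb{S}^{n-1}$. First I would recall that eigenfunctions of $-\Delta$ on $B_{\rho_0}$ decompose as $f(\rho,\omega) = \sum_m h_m(\rho)Y_m(\omega)$, where $Y_m$ are spherical harmonics on $\mathbb{S}^{n-1}$ with eigenvalues $\mu_m$ ($\mu_0 = 0$, $\mu_1 = \cdots = \mu_n = n-1$, the first nonzero being attained with multiplicity $n$), and $h_m$ solves the ODE obtained by substituting into $-\Delta f - ncf = \lambda f$ with the warped metric. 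The key observation is that the coordinate functions $x_1,\dots,x_{n+1}$ of the ambient $\mathbb{R}^{n+2}$ (or $\mathbb{R}^{n+2}_1$), restricted to $\mathbb{M}^n(c)$, are eigenfunctions of $-\Delta - nc$ with eigenvalue $0$: this follows from the fact that the position vector $\varphi$ of $\mathbb{M}^n(c)\hookrightarrow \mathbb{R}^{n+2}$ satisfies $\Delta \varphi = -nc\,\varphi$ (totally umbilical hypersurface version of \eqref{0015} with $r=0$, or the standard formula $\Delta x_i + nc\,x_i = 0$ on space forms), so $-\Delta x_i - nc\,x_i = 0$. I must then check that these candidates satisfy the Robin boundary condition on $\partial B_{\rho_0}$.

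The key steps, in order: (1) Show that $\lambda = 0$ is an eigenvalue of multiplicity (at least) $n$ with eigenfunctions spanned by the ``linear'' functions vanishing along the polar axis of $B_{\rho_0}$ — concretely, in polar coordinates centered at the center of $B_{\rho_0}$, these are $f = \sn_c(\rho)\,\omega_j$ for $j = 1,\dots,n$ (the Euclidean/hyperbolic analogues of $x_j$), each of which satisfies $-\Delta f - ncf = 0$ and, on $\partial B_{\rho_0}$ where $\rho = \rho_0$, satisfies $\partial f/\partial\overline\eta = \sn_c'(\rho_0)\omega_j = \cn_c(\rho_0)\omega_j = \frac{\cn_c(\rho_0)}{\sn_c(\rho_0)}f$, which is exactly the Robin condition. (2) Show that $0$ is \emph{not} the first eigenvalue, i.e. $\lambda_1 < 0$: here I would exhibit the radial test function $f_0 = \cn_c(\rho)$ (or, if $c \le 0$, a suitable modification), which satisfies $-\Delta f_0 - nc f_0 = $ (a negative multiple of $f_0$) — more precisely compute $\Delta \cn_c(\rho) + nc\,\cn_c(\rho)$ using $\Delta = \partial_\rho^2 + (n-1)\frac{\cn_c}{\sn_c}\partial_\rho$ on radial functions and the identities $\cn_c'' = -c\,\cn_c$, $\cn_c^2 + c\,\sn_c^2 = 1$ — to see it is negative, hence $\lambda_1 < 0$; alternatively, plug a radial bump function into the Rayleigh quotient of \eqref{0061} and use that the Robin boundary term and $-nc$ term can be arranged to make it negative. (3) Show that the $0$-eigenspace is \emph{exactly} $n$-dimensional and that there is no eigenvalue strictly between $\lambda_1$ and $0$: by the separation of variables, any eigenvalue $\lambda \le 0$ forces the corresponding $h_m$ to be a solution of a Sturm–Liouville problem on $[0,\rho_0]$ with the Robin condition at $\rho_0$ and regularity at $0$; a nodal/oscillation argument shows the $m=0$ (radial) branch contributes exactly one eigenvalue below $0$ (which is $\lambda_1$) and the $m=1$ branch contributes the value $0$, while all higher branches ($\mu_m \ge 2(n)$, i.e. $m > n$) give strictly positive eigenvalues. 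Hence $\lambda_2 = 0$ with multiplicity $n$.

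The main obstacle I anticipate is step (3): proving that $\lambda_2$ equals $0$ \emph{and} is simple in the ``angular mode'' sense requires controlling the ordering of eigenvalues across the different spherical-harmonic sectors. The clean way is to use the monotonicity of the lowest Dirichlet-to-Robin Sturm–Liouville eigenvalue $\Lambda(\mu)$ as a function of the separation constant $\mu \ge 0$: one shows $\mu \mapsto \Lambda(\mu)$ is strictly increasing (a standard perturbation/variational argument for the radial operator $-h'' - (n-1)\frac{\cn_c}{\sn_c}h' + \frac{\mu}{\sn_c^2}h - nc\,h$ with Robin data), so that $\Lambda(0) = \lambda_1 < 0$, $\Lambda(\mu_1) = \Lambda(n-1) = 0$ exactly by step (1), and $\Lambda(\mu_m) > 0$ for $m > n$; combined with the fact that within the $\mu_1$-sector the lowest radial eigenvalue is simple, this pins down $\lambda_2 = 0$ with multiplicity exactly $\dim\{Y : -\Delta_{\mathbb{S}^{n-1}}Y = (n-1)Y\} = n$. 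This monotonicity and the verification that $\Lambda(n-1) = 0$ is attained precisely by the first radial mode (not a higher one) is the delicate point; everything else reduces to the explicit identities for $\sn_c, \cn_c$ already recorded in Section \ref{cinco}.
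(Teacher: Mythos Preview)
Your approach is essentially the same as the paper's: separation of variables via the warped product structure, the explicit radial eigenfunction $\sn_c(\rho)$ in the $l=1$ sector to pin down $\tau_{1,1}=0$, monotonicity of the lowest radial eigenvalue in the spherical parameter to order the sectors, and a separate argument that $\tau_{2,0}>0$ (the paper defers this last point to \cite[Proposition~A.1]{guo2022stable}, while you sketch a Sturm--Liouville nodal argument).

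One small correction: your Step~(2) test function $f_0=\cn_c(\rho)$ does not do what you claim. A direct computation gives $-\Delta\cn_c - nc\,\cn_c = 0$, not a negative multiple, and $\cn_c$ fails the Robin condition anyway. This step is in fact unnecessary: once you have the monotonicity $\Lambda(0)<\Lambda(n-1)=0$ from Step~(3), the inequality $\lambda_1=\tau_{1,0}<0$ follows immediately, which is exactly how the paper obtains it.
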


\begin{proof}
Since $B_{\rho_0}$ is rotationally symmetric, the eigenvalues of \eqref{0061} are given by $\tau_{k,l}$, for $(k,l) \in \mathbb{N} \times \left(\mathbb{N}\times\{0\}\right)$, where $\tau_{k,l}$ is the $k$-th eigenvalue of the problem
\begin{equation}\label{0073}
\begin{dcases*}
-f^{\prime\prime}(\rho)-(n-1)\frac{\cn_c(\rho)}{\sn_c(\rho)}f^\prime(\rho)-\left(nc-\frac{l\left(l+n-2\right)}{\sn_c^2(\rho)}\right)f(\rho)=\tau_{\cdot,l}f(\rho),& in $(0,\rho_0)$ \\
f^\prime(\rho_0)-\frac{\cn_c(\rho_0)}{\sn_c(\rho_0)}f(\rho_0)=0
\end{dcases*}
\end{equation}
and $f(0)>0$, $f^\prime(0)>0$ for $l=0$ or $f(\rho) \sim \rho^l$ as $\rho\rightarrow 0$ for $l \in \mathbb{N}$. The eigenvalues $\tau_{k,l}$ of \eqref{0073} are given by
\begin{equation}\label{0079}
\tau_{1,l}=\inf\left\{\dfrac{\int_0^{\rho_0}\left(f^\prime(\rho)^2-\left(nc-\frac{l\left(l+n-2\right)}{\sn_c^2(\rho)}\right)f(\rho)^2\right)\sn_c^{n-1}(\rho)\,d\rho-\sn_c^{n-2}(\rho_0)\cn_c(\rho_0)f(\rho_0)^2}{\int_0^{\rho_0}f(\rho)^2\sn_c^{n-1}(\rho)\,d\rho}\right\}
,\end{equation}
over all $f \in H^1([0,\rho_0])\backslash\{0\}$ with $f(0)=0$ and for $k \in \mathbb{N}$, $\tau_{k,l}$ is defined in a similar way as $\eqref{0079}$ over all $f \in E_{\tau_{m,l}}^\perp\backslash\{0\}$ for all $m \in \{0,...,k\}$, where $E_{\tau_{m,l}}$ is the eigenspace of $\eqref{0073}$ associated to $\tau_{m,l}$ and with $f(0)=0$. Since $\tau_{1,l_1}<\tau_{1,l_2}$ for all $l_1<l_2$ we have that $\lambda_1=\tau_{1,0}$ and $\lambda_2=\min\left\{\tau_{2,0},\tau_{1,1}\right\}$.

Notice that $\tau_{1,1}=0$, since for the positive function $f_1=\sn_c$ in $[0,\rho_0]$, we have $f_1^\prime(\rho)^2+cf_1(\rho)^2=1$, $f_1^{\prime\prime}(\rho)=-cf_1(\rho)$ and
\begin{eqnarray*}
-f_1^{\prime\prime}(\rho)-(n-1)\frac{\cn_c(\rho)}{\sn_c(\rho)}f_1^\prime(\rho)-\left(nc-\frac{n-1}{\sn_c^2(\rho)}\right)f_1(\rho) &=& cf_1(\rho)-(n-1)\frac{f_1^\prime(\rho)^2}{f_1(\rho)}-ncf_1(\rho)+\frac{n-1}{f_1(\rho)} \\
&=& \frac{n-1}{f_1(\rho)} \cdot \left(1-f_1^\prime(\rho)^2\right)-c(n-1)f_1(\rho) \\
&=& 0
\end{eqnarray*}
for all $\rho\in(0,\rho_0)$. The same argument used in \cite[Proposition A.1]{guo2022stable} shows that $\tau_{2,0}>0$ and its multiplicity is the same as that of second eigenvalue $n-1$ of the Laplacian of $\mathbb{S}^{n-1}$.
\end{proof}

\begin{proof}[Proof of Proposition \ref{004.6}]
Let $\varphi : \Sigma \rightarrow \Omega \subseteq \mathbb{M}^{n+1}(c)$ be a totally umbilical capillary hypersurface supported on $\partial\Omega$ with contact angle $\theta\in(0,\pi)$. If $\kappa$ and denotes the principal curvature of $\varphi$, the Gauss equation shows the sectional curvature is equal to $K=K_{\mathbb{M}^{n+1}(c)}+\kappa^2=\kappa^2+nc$. From \eqref{0092} we have that $\nu=-\cot\theta\,\eta+\csc\theta\,\overline\eta$, thus for all $X,Y \in \Gamma(T\Sigma)$
\begin{eqnarray*}
\left(\emph{II}_{\partial\Sigma}\right)_\nu(X,Y) &=& \left<\nabla_XY,\nu\right> \nonumber \\
&=& \left(\emph{II}_{\partial\Omega}\right)_{\overline\eta}(X,Y)\,\csc\theta-\left(\emph{II}_\Sigma\right)_\eta(X,Y)\,\cot\theta
,\end{eqnarray*}
proving that $\iota_{\partial\Sigma} : \partial\Sigma \hookrightarrow \Sigma$ is a totally umbilical hypersurface with umbilicity factor equal to $\kappa_{\partial\Sigma}=\kappa_{\partial\Omega}\csc\theta-\kappa\cot\theta$. Thus the bilinear form associated to the weak formulation of \eqref{0061} is exactly the $0$-index form $\mathcal{I}_{0,\theta}$ described in \eqref{0010}. From the Lemma \ref{00B.1}, the Morse index of $\mathcal{I}_{0,\theta}$ over $H^1(\Sigma)$ is equal to $1$. But since the function
\begin{equation}\label{0091}
f(\rho)=\begin{dcases*}
\frac{\cn_c(\rho_0)\cn_c(\rho)-1}{nc},& $c \neq 0$ \\ -\frac{\rho^2+\rho_0^2}{2n},& $c=0$
\end{dcases*}
\end{equation}
is a non-positive function in $\Sigma$ such that, for $c \neq 0$,
\begin{eqnarray*}
\Delta f+ncf &=& \frac{1}{\sn_c^{n-1}(\rho)}\frac{d}{d\rho}\left(\sn_c^{n-1}(\rho)\frac{d}{d\rho}\left(\frac{\cn_c(\rho_0)\cn_c(\rho)-1}{nc}\right)\right)+\cn_c(\rho_0)\cn_c(\rho)-1 \\
&=& -\frac{\cn_c(\rho_0)}{n\sn_c^{n-1}(\rho)}\frac{d}{d\rho}\left(\sn_c^n(\rho)\right)+\cn_c(\rho_0)\cn_c(\rho)-1 \\
&=& -\frac{cn_c(\rho_0)}{n\sn_c^{n-1}(\rho)} \cdot n\sn_c^{n-1}(\rho)\cdot\cn_c(\rho)+\cn_c(\rho_0)\cn_c(\rho)-1=-1
\end{eqnarray*}
in $\Sigma$ and
\begin{eqnarray*}
\frac{\partial f}{\partial\overline\eta} &=& \left.\frac{d}{d\rho}\left(\frac{\cn_c(\rho_0)\cn_c(\rho)-1}{nc}\right)\right\vert_{\rho=\rho_0}=-\frac{\cn_c(\rho_0)}{n} \cdot\sn_c(\rho_0)=-\frac{\cn_c(\rho_0)}{\sn_c(\rho_0)} \cdot \frac{c\sn_c^2(\rho_0)}{nc} \\
&=& \frac{\cn_c(\rho_0)}{\sn_c(\rho_0)}\cdot\frac{\cn_c^2(\rho_0)-1}{nc}=\frac{\cn_c(\rho_0)}{\sn_c(\rho_0)}f(\rho_0)
\end{eqnarray*}
on $\partial\Sigma$, and for $c=0$,
\begin{equation*}
\Delta f=\frac{1}{\rho^{n-1}}\frac{d}{d\rho}\left(\rho^{n-1}\frac{d}{d\rho}\left(-\frac{\rho^2+\rho_0^2}{2n}\right)\right)=-\frac{1}{n\rho^{n-1}}\frac{d}{d\rho}\left(\rho^n\right)=-1
\end{equation*}
in $\Sigma$ and
\begin{equation*}
\frac{\partial f}{\partial\overline\eta}=\left.\frac{d}{d\rho}\left(-\frac{\rho^2+\rho_0^2}{2n}\right)\right\vert_{\rho=\rho_0}=-\frac{\rho_0}{n}=-\frac{1}{\rho_0}\cdot\frac{2\rho_0^2}{2n}=\frac{\cn_0(\rho_0)}{\sn_0(\rho_0)}f(\rho_0)
\end{equation*}
on $\partial\Sigma$, \cite[Proposition 3.1]{tran2020morse} implies the Morse index of $\mathcal{I}_{0,\theta}$ over $\mathcal{F}$ is equal to $1-1=0$, proving that $\varphi$ is $0$-stable.
\end{proof}

\section{Proof of Lemma \ref{012.1}}
In this appendix we give a proof of Lemma \ref{012.1}.
\begin{enumerate}[(i)]
\item The first claim is immediate.

\item Now assume that $\lambda_1<0$ and let $f_i \in E_{\lambda_i}$ be an eigenfunction associated to the $i$-th eigenvalue $\lambda_i$ of $T_r^S$ such that $\left\{f_i\right\}_{i \in \mathbb{N}}$ is an orthonormal basis for $L^2(\Sigma)$. Since $f_1$ does not change its sign, we have $\int_\Sigma f_1\,d\mu_\Sigma \neq 0$. Given $f \in H_0^1(\Sigma)$, let $a=-\frac{\int_\Sigma f\,d\mu_\Sigma}{\int_\Sigma f_1\,d\mu_\Sigma}$ and define $\widetilde{f}=af_1+f$; we have $\int_\Sigma \widetilde{f}\,d\mu_\Sigma=0$. If $0$ is not an eigenvalue of $T_r^S$, it follows from the Fredholm alternative that there exists a unique function $f \in H_0^1(\Sigma)$ satisfying $T_r^Sf=-1$ weakly in $\Sigma$, i.e., $I_r^S(f,g)=-\int_\Sigma g\,d\mu_\Sigma$ for all $g \in H_0^1(\Sigma)$. Therefore,
\begin{eqnarray*}
I_r^S(\widetilde{f},\widetilde{f}) &=& I_r^S(af_1+f,af_1+f)=a^2I_r^S(f_1,f_1)+2aI_r^S(f_1,f)+I_r^S(f,f) \\
&=& a^2\lambda_1-2a\int_\Sigma f_1\,d\mu_\Sigma-\int_\Sigma f\,d\mu_\Sigma \\
&=& a^2\lambda_1+\int_\Sigma f\,d\mu_\Sigma,
\end{eqnarray*}
proving that $\varphi$ is symmetric $r$-unstable if $\int_\Sigma f\,d\mu_\Sigma<0$. If $\int_\Sigma f\,d\mu_\Sigma \geq 0$ we have
\begin{equation}\label{0060}
I_r^S(f,f)=-\int_\Sigma f\,d\mu_\Sigma \leq 0.
\end{equation}
Since $\lambda_2>0$, we have $f \notin E_{\lambda_1}^\perp$; otherwise we would obtain as a consequence of \eqref{0058} and \eqref{0060} that 
\begin{equation}\label{desigualdade} 0 \geq I_r^S\left(\frac{f}{\left\Vert{f}\right\Vert_{L^2(\Sigma)}},\frac{f}{\left\Vert{f}\right\Vert_{L^2(\Sigma)}}\right) \geq \lambda_2>0, 
\end{equation}
which is a contradiction. Thus, given $\overline{f} \in \mathcal{F}$, there exist a unique $b \in \mathbb{R}$ and $\widehat{f} \in E_{\lambda_1}^\perp$ such that $\overline{f}=bf+\widehat{f}$. Therefore,
\begin{eqnarray*}
I_r^S(\overline{f},\overline{f}) &=& bI_r^S(f,\overline{f})+I_r^S(\widehat{f},bf+\widehat{f}) \\
&=& -b\int_\Sigma\overline{f}\,d\mu_\Sigma-b\int_\Sigma\widehat{f}\,d\mu_\Sigma+I_r^S(\widehat{f},\widehat{f}) \\
&\geq& b^2\int_\Sigma f\,d\mu_\Sigma+\lambda_2\int_\Sigma \widehat{f}^2\,d\mu_\Sigma \geq 0,
\end{eqnarray*}
which implies that $\varphi$ is symmetric $r$-stable and proving (ii).

\item In order to prove (iii), suppose $f=f_2$ and let $\widetilde{f}=af_1+f_2$. It follows from the orthonormality of $\left\{f_l\right\}_{l \in \mathbb{N}}$ that
\begin{eqnarray*}
I_r^S(\widetilde{f},\widetilde{f}) &=& a^2I_r^S(f_1,f_1)+2aI_r^S(f_1,f_2)+I_r^S(f_2,f_2) \\
&=& a^2\lambda_1\int_\Sigma f_1^2\,d\mu_\Sigma+2a\lambda_2\int_\Sigma f_1f_2\,d\mu_\Sigma+\lambda_2\int_\Sigma f_2^2\,d\mu_\Sigma \\
&=& \lambda_1\left(\int_\Sigma f_2\,d\mu_\Sigma\right)^2\left(\int_\Sigma f_1\,d\mu_\Sigma\right)^{-2}+\lambda_2.
\end{eqnarray*}
Therefore, if $\lambda_2 \leq 0$ and $\int_\Sigma f_2\,d\mu_\Sigma \neq 0$, we have $I_r^S(\widetilde{f},\widetilde{f})<0$, proving that $\varphi$ is $r$-unstable and proving the first claim of (iii). The result also holds if $\lambda_2<0$, which is (v).

\item Finally, in order to prove the claim of (iv), notice that since $\lambda_2=0$ and $1 \in E_{\lambda_2}^\perp$ by hypothesis, there exists a unique $f \in H_0^1(\Sigma) \cap E_{\lambda_2}^\perp$ such that $T_r^Sf=-1$ weakly in $\Sigma$.

If $\int_\Sigma f\,d\mu_\Sigma >0$, we use \eqref{0058}, \eqref{0060} and (\ref{desigualdade}) in order to conclude that $f \notin E_{\lambda_1}^\perp$ and we proceed as in the proof of ii). 

If $\int_\Sigma f\,d\mu_\Sigma=0$, we have $$I_r^S(f,f)=-\int_\Sigma f\,d\mu_\Sigma=0=\lambda_2.$$ If $f \in E_{\lambda_1}^\perp$, by (\ref{0058}) $f$ would be an eigenfunction associated to $\lambda_2=0$, thus $T_r^Sf=0$, which is a contradiction. Therefore $f \notin E_{\lambda_1}^\perp$ and the proof is analogous to the last part of the proof of (ii).
\end{enumerate}

\bibliographystyle{acm}
\bibliography{bibliography-articles}

\begin{thebibliography}{10}

\bibitem{ainouz2016stable}
{\sc Ainouz, A., and Souam, R.}
\newblock Stable capillary hypersurfaces in a half-space or a slab.
\newblock {\em Indiana University Mathematics Journal\/} (2016), 813--831.

\bibitem{alaee2021stable}
{\sc Alaee, A., Lesourd, M., and Yau, S.-T.}
\newblock Stable surfaces and free boundary marginally outer trapped surfaces.
\newblock {\em Calculus of Variations and Partial Differential Equations 60}, 5
  (2021), 1--27.

\bibitem{alencar1998integral}
{\sc Alencar, H., and Colares, A.~G.}
\newblock Integral formulas for the $r$-mean curvature linearized operator of a
  hypersurface.
\newblock {\em Ann. Global Anal. Geom 16}, 3 (1998), 203--220.

\bibitem{marques1997stability}
{\sc Barbosa, J.~L., and Colares, A.~G.}
\newblock Stability of hypersurfaces with constant $r$-mean curvature.
\newblock {\em Annals of Global Analysis and Geometry 15}, 3 (1997), 277--297.

\bibitem{barbosa1984stability}
{\sc Barbosa, J.~L., and do~Carmo, M.}
\newblock Stability of hypersurfaces with constant mean curvature.
\newblock {\em Mathematische Zeitschrift 185}, 3 (1984), 339--353.

\bibitem{lucas1988stability}
{\sc Barbosa, J.~L., do~Carmo, M., and Eschenburg, J.}
\newblock Stability of hypersurfaces of constant mean curvature in riemannian
  manifolds.
\newblock {\em Mathematische Zeitschrift 197}, 1 (1988), 123--138.

\bibitem{chavel1984eigenvalues}
{\sc Chavel, I.}
\newblock {\em Eigenvalues in {R}iemannian geometry}.
\newblock Academic press, 1984.

\bibitem{cheng2005embedded}
{\sc Cheng, X., and Rosenberg, H.}
\newblock Embedded positive constant $r$-mean curvature hypersurfaces in ${M}^m
  \times \mathbb{R}$.
\newblock {\em Anais da Academia Brasileira de Ci{\^e}ncias 77\/} (2005),
  183--199.

\bibitem{choe2015stable}
{\sc Choe, J., and Koiso, M.}
\newblock Stable capillary hypersurfaces in a wedge.
\newblock {\em Pacific Journal of Mathematics 280}, 1 (2015), 1--15.

\bibitem{elbert2002constant}
{\sc Elbert, M.~F.}
\newblock Constant positive $2$-mean curvature hypersurfaces.
\newblock {\em Illinois Journal of Mathematics 46}, 1 (2002), 247--267.

\bibitem{elbert2019note}
{\sc Elbert, M.~F., and Nelli, B.}
\newblock A note on the stability for constant higher mean curvature
  hypersurfaces in a {R}iemannian manifold.
\newblock {\em arXiv preprint arXiv:1912.12103\/} (2019).

\bibitem{guo2022stable}
{\sc Guo, J., Wang, G., and Xia, C.}
\newblock Stable capillary hypersurfaces supported on a horosphere in the
  hyperbolic space.
\newblock {\em Advances in Mathematics 409\/} (2022), 108641.

\bibitem{hardy1952inequalities}
{\sc Hardy, G.~H., Littlewood, J.~E., P{\'o}lya, G., P{\'o}lya, G., et~al.}
\newblock {\em Inequalities}.
\newblock Cambridge university press, 1952.

\bibitem{Hounie1995maximum}
{\sc Hounie, J., and Leite, M.~L.}
\newblock The maximum principle for hypersurfaces with vanishing curvature
  functions.
\newblock {\em Journal of Differential Geometry 41}, 2 (1995), 247--258.

\bibitem{koiso2002deformation}
{\sc Koiso, M.}
\newblock Deformation and stability of surfaces with constant mean curvature.
\newblock {\em Tohoku Mathematical Journal, Second Series 54}, 1 (2002),
  145--159.

\bibitem{li2017eigenvalue}
{\sc Li, F., Coville, J., and Wang, X.}
\newblock On eigenvalue problems arising from nonlocal diffusion models.
\newblock {\em Discrete \& Continuous Dynamical Systems 37}, 2 (2017), 879.

\bibitem{li2017stability}
{\sc Li, H., and Xiong, C.}
\newblock Stability of capillary hypersurfaces with planar boundaries.
\newblock {\em The Journal of Geometric Analysis 27}, 1 (2017), 79--94.

\bibitem{lopez2014capillary}
{\sc L{\'o}pez, R.}
\newblock Capillary surfaces with free boundary in a wedge.
\newblock {\em Advances in Mathematics 262\/} (2014), 476--483.

\bibitem{pyo2019rigidity}
{\sc Pyo, J.}
\newblock Rigidity theorems of hypersurfaces with free boundary in a wedge in a
  space form.
\newblock {\em Pacific Journal of Mathematics 299}, 2 (2019), 489--510.

\bibitem{ros1997stability}
{\sc Ros, A., and Souam, R.}
\newblock On stability of capillary surfaces in a ball.
\newblock {\em Pacific journal of mathematics 178}, 2 (1997), 345--361.

\bibitem{ros1995stability}
{\sc Ros, A., and Vergasta, E.}
\newblock Stability for hypersurfaces of constant mean curvature with free
  boundary.
\newblock {\em Geometriae Dedicata 56}, 1 (1995), 19--33.

\bibitem{rosenberg1993hypersurfaces}
{\sc Rosenberg, H.}
\newblock Hypersurfaces of constant curvature in space forms.
\newblock {\em Bull. Sci. Math 117}, 2 (1993), 211--239.

\bibitem{souam1997stability}
{\sc Souam, R.}
\newblock On stability of stationary hypersurfaces for the partitioning problem
  for balls in space forms.
\newblock {\em Mathematische Zeitschrift 224}, 2 (1997), 195--208.

\bibitem{souam2021stable2}
{\sc Souam, R.}
\newblock On stable capillary hypersurfaces with planar boundaries.
\newblock {\em arXiv preprint arXiv:2111.01500\/} (2021).

\bibitem{souam2021stable}
{\sc Souam, R.}
\newblock Stable constant mean curvature surfaces with free boundary in slabs.
\newblock {\em The Journal of Geometric Analysis 31}, 1 (2021), 282--297.

\bibitem{spivak1970comprehensive}
{\sc Spivak, M.}
\newblock A comprehensive introduction to geometry.
\newblock {\em Vol. IV. Publish or Perish 1975\/} (1970).

\bibitem{tran2020morse}
{\sc Tran, H., and Zhou, D.}
\newblock On the morse index with constraints {I}: An abstract formulation.
\newblock {\em arXiv preprint arXiv:2010.05952\/} (2020).

\bibitem{wang2019uniqueness}
{\sc Wang, G., and Xia, C.}
\newblock Uniqueness of stable capillary hypersurfaces in a ball.
\newblock {\em Mathematische Annalen 374}, 3 (2019), 1845--1882.

\end{thebibliography}
\end{document}